\newtheorem{theorem}{Theorem}[section]
\newtheorem{proposition}[theorem]{Proposition}
\newtheorem{lemma}[theorem]{Lemma}
\newtheorem{Corollary}[theorem]{Corollary}
\theoremstyle{definition}
\newtheorem{remark}[theorem]{Remark}
\DeclareMathOperator{\dv}{div}
\DeclareMathOperator{\Tr}{tr}
\DeclareMathOperator{\tr}{tr}
\DeclareMathOperator{\diam}{diam}
\DeclareMathOperator{\dist}{dist}
\DeclareMathOperator{\interior}{int}
\DeclareMathOperator{\cl}{clos}
\newcommand{\R}{{\mathbb R}}
\newcommand{\e}{\epsilon}
\title{Average gradient localisation for degenerate elliptic equations in the plane }
\author{Thibault Lacombe\footnote{Institut de Math\'ematiques de Toulouse, UMR 5219, Universit\'e de Toulouse, CNRS, UPS
IMT, 31062 Toulouse Cedex 9, France. \tt{thibault.lacombe@math.univ-toulouse.fr}}}
\begin{document}

\maketitle
\begin{abstract}
We consider Lipschitz solutions to the possibly highly degenerate elliptic equation $ \dv G(\nabla u)=0 $ in $B_1\subset\R^2 $, for any continuous strictly monotone vector field $ G\colon\R^2\to\R^2$. We show that $u$ is either $C^1$ at $0$, or any blowup limit $v(x)=\lim \frac{u(\delta x)-u(0)}{\delta} $ along a sequence $\delta\to 0$ satisfies $ \nabla v\in \mathcal{D}\cap \mathcal{S} \text{ a.e} $. Here, $ \mathcal{D}$ and $\mathcal{S}$ can be roughly interpreted as the sets where ellipticity degenerates from below and above, that is, the symmetric parts of $ \nabla G$ and $(\nabla G)^{-1}$ have a zero eigenvalue. This is a strong indication in favor of the expected continuity of $H(\nabla u)$ for any continuous $H$ vanishing on $\mathcal{D}\cap \mathcal{S}$. In contrast with previous results in the same spirit, we do not make any assumption on the structure of $G$ besides its continuity and strict monotony.
\end{abstract}

\section{Introduction}
\subsection{Main Result}

This work focuses on the regularity of Lipschitz solutions \( u: B_1 \subset \mathbb{R}^2 \to \mathbb{R} \) to the nonlinear equation
\begin{align}
\label{equation div}
\operatorname{div}(G(\nabla u)) = 0,
\end{align}
where \( G : \mathbb{R}^2 \to \mathbb{R}^2 \) is a strictly monotone field, that is
\[
\langle G(\xi) - G(\zeta), \xi - \zeta \rangle > 0 \quad \forall \xi \neq \zeta \in \mathbb{R}^2.
\]

As shown in \cite{LacLam}, a sufficient condition for Lipschitz solutions of \eqref{equation div} to belong to \( C^1(B_1) \) is, roughly speaking, the finiteness of the set of points $\xi \in \R^2$ where the usual ellipticity condition 
\begin{equation}      
\label{eq: Elliptic inequality} 
\lambda \vert \zeta \vert^2 \leq \langle \nabla G(\xi)\zeta,\zeta \rangle \leq \Lambda \vert \zeta \vert^2 \quad \zeta \in \R^2 \end{equation}      
is not satisfied \textit{both} from below ($\lambda=0$) and above ($\Lambda = + \infty)$. 
In this article we investigate what can be said about the regularity and possible singularities of $u$ when this set is arbitrary, hence the equation may be extremely degenerate. \\ \\ To fix ideas, one can first think of $G$ as the gradient of a $C^1$, strictly convex function \( F : \mathbb{R}^2 \to \mathbb{R} \). In this setting, equation \eqref{equation div} corresponds to the Euler-Lagrange equation for minimizers of the energy 
\begin{equation} 
\label{eq: Varformula}
\int_{B_1} F(\nabla u),             
\end{equation} for which the question of regularity is well understood in the uniformly elliptic setting, encoded in the inequality \eqref{eq: Elliptic inequality}, since the work of Morrey in dimension 2, and later De-Giorgi,Nash,Moser in arbitrary dimension. We investigate in this paper the case where the set $\mathcal{D}\cap\mathcal{S}$ of points $\xi\in \R^2$ for which \( D^2 F(\xi) \) has one eigenvalue equal to \( 0 \) and the other equal to \( +\infty \) is not empty, without imposing any conditions of any kind on its structure. As noticed in the introduction of \cite{DSS}, there is evidence suggesting that Lipschitz minimizers of \eqref{eq: Varformula} are in fact $C^1$ if $F$ is strictly convex, regardless of the structure of the set $ \mathcal{D}\cap \mathcal{S}$. 	This problem is solved in \cite{DSS} when $ \mathcal{D}\cap \mathcal{S}$ is empty or $ \mathcal{D}$ is finite, and in \cite{LacLam} when $ \mathcal{D}\cap \mathcal{S}$ is finite, but remains open for general strictly convex function $F$, which partially motivates the present work. \\

Interestingly, this conjecture can not be generalized to equation \eqref{equation div} with a general strictly monotone field $G$ which might not be a gradient. In that case, Lispchitz solutions might fail to be $C^1$, as shown in \cite[Theorem 1.5]{LacLam}. Before going further, we give the rigorous definitions of $\mathcal{D}$ and $\mathcal{S}$ for $G \colon \R^2 \to \R^2$ continuous, strictly monotone. They are given by
\begin{align}\label{eq:DS}
\begin{aligned}
\mathcal{D}(G) & =
\bigcap_{\lambda>0} \cl 
\left\lbrace \xi \in \mathbb{R}^2 : \liminf_{\vert  \zeta \vert \to 0} \frac{\langle G(\xi +\zeta) - G(\xi), \zeta \rangle}{\vert \zeta \vert^2} \leq\lambda \right\rbrace 
\\
\mathcal{S}(G) & =\bigcap_{\Lambda>0} \cl 
\left\lbrace \xi \in \mathbb{R}^2 : \liminf_{\vert \zeta \vert \to 0} \frac{\left\langle G( \xi +\zeta) - G(\xi), \zeta \right\rangle}{\left\vert G(\xi+ \zeta) - G(\xi) \right\vert^2} \leq
\frac 1\Lambda
\right\rbrace,
\end{aligned}
\end{align}
where $\cl(A)$ stands for the closure of a set $A \subset\R^2$, see the introduction of \cite{LacLam} for a more thorough discussion of these definitions.
Coming back to the example \cite[Theorem 1.5]{LacLam}, a particularly striking feature of it is that the gradient of the Lipschitz (but non-\( C^1 \)) solution satisfies \( \nabla u \in \mathcal{D} \cap \mathcal{S} \) almost everywhere. This naturally raises the question: what can be said about the regularity of \( \dist(\nabla u, \mathcal{D} \cap \mathcal{S}) \), where \( \operatorname{dist} \) denotes the standard distance function. Our main result shows that it is approximately continuous everywhere.

\begin{theorem}
\label{T: LooseTheorem}
Let $G:\R^2 \to \R^2$ strictly monotone, continuous and let $u : B_1 \to \R$ a Lipschitz solution of \eqref{equation div}.   Then, for any $x \in B_1$, either $x \mapsto \dist(\nabla u(x),\mathcal{D}\cap\mathcal{S})$ is continuous at $x$, or it holds 
\[ \frac{1}{\vert B_{\delta}\vert} \int_{B_{\delta}(x)} \dist(\nabla u(y),\mathcal{D}\cap\mathcal{S}) dy \to 0 \text{ as } \delta \to 0                       . \]
\end{theorem}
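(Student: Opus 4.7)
The plan is to deduce Theorem \ref{T: LooseTheorem} from the blowup dichotomy announced in the abstract, which we take as the technical heart of the argument: for any $x_0 \in B_1$, defining $u_\delta(x) := \delta^{-1}(u(x_0+\delta x) - u(x_0))$, either $u$ is $C^1$ at $x_0$, or every subsequential uniform limit $v$ of $\{u_\delta\}$ along some $\delta \to 0$ satisfies $\nabla v \in \mathcal{D}\cap\mathcal{S}$ a.e.\ on its domain. Such limits exist by Arzelà-Ascoli, since the $u_\delta$ are uniformly Lipschitz and each solves $\dv G(\nabla u_\delta) = 0$ on an expanding ball.

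If $u$ is $C^1$ at $x_0$, then $\nabla u$ is continuous at $x_0$, and since $\xi \mapsto \dist(\xi, \mathcal{D}\cap\mathcal{S})$ is $1$-Lipschitz on $\R^2$, the composition $y \mapsto \dist(\nabla u(y), \mathcal{D}\cap\mathcal{S})$ is continuous at $x_0$, which is the first alternative of Theorem \ref{T: LooseTheorem}.

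Otherwise, the scaling identity
\[
\frac{1}{|B_\delta|}\int_{B_\delta(x_0)} \dist(\nabla u(y),\mathcal{D}\cap\mathcal{S})\,dy = \frac{1}{|B_1|}\int_{B_1}\dist(\nabla u_\delta(x),\mathcal{D}\cap\mathcal{S})\,dx
\]
reduces the second alternative to showing the right-hand side tends to $0$. Argue by contradiction: if some subsequence $\delta_n\to 0$ keeps this integral above $\varepsilon>0$, extract a further subsequence with $u_{\delta_n}\to v$ uniformly on $B_1$, so that $\dist(\nabla v,\mathcal{D}\cap\mathcal{S})=0$ a.e.\ by the dichotomy. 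It then suffices to upgrade this uniform convergence to pointwise a.e.\ convergence $\nabla u_{\delta_n}\to\nabla v$ on $B_1$; the $1$-Lipschitz property of $\dist(\cdot,\mathcal{D}\cap\mathcal{S})$ together with the uniform Lipschitz bound on $u$ then close the contradiction by dominated convergence.

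The genuine difficulty is therefore the strong compactness of $\nabla u_{\delta_n}$ with only strict (not uniform) monotonicity of $G$. The natural route is a Minty plus div-curl argument: since $G(\nabla u_{\delta_n})$ is divergence-free and $\nabla u_{\delta_n}$ is curl-free, both uniformly bounded in $L^\infty$, the div-curl lemma lets us pass to the limit in the bilinear form $\langle G(\nabla u_{\delta_n}), \nabla u_{\delta_n}\rangle$, identify the weak limit of $G(\nabla u_{\delta_n})$ with $G(\nabla v)$ via Minty's trick, and deduce $\langle G(\nabla u_{\delta_n}) - G(\nabla v),\nabla u_{\delta_n}-\nabla v\rangle \to 0$ in $L^1_{\loc}$; strict monotonicity of $G$ then forces a.e.\ convergence of gradients along a further subsequence. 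Implementing this cleanly without any uniform ellipticity assumption, and establishing the underlying blowup dichotomy in the full generality of strictly monotone $G$, are the two main technical challenges, and will presumably rely crucially on the two-dimensional structure in the spirit of \cite{DSS,LacLam}.
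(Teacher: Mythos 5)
Your proposal is circular at its core: you ``take as the technical heart of the argument'' the blowup dichotomy (either $u$ is $C^1$ at $x_0$, or every blowup limit $v$ satisfies $\nabla v\in\mathcal{D}\cap\mathcal{S}$ a.e.), but in the paper that dichotomy is Corollary \ref{th:Blowupversion}, which is \emph{deduced from} Theorem \ref{T: LooseTheorem}, not the other way around. Everything you actually argue --- the rescaling identity, the contradiction/subsequence step, and the strong compactness of $\nabla u_{\delta_n}$ via div-curl plus Minty plus strict monotonicity (which, for gradients confined to $\overline{B_M}$ where $\omega_G(t)>0$ for $t>0$, is indeed a correct and standard way to get a.e.\ convergence of gradients) --- only shows that Theorem \ref{T: LooseTheorem} and the dichotomy are essentially equivalent. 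You acknowledge at the end that ``establishing the underlying blowup dichotomy'' is one of the two main challenges, but you give no argument for it, so the proof of the theorem is missing precisely where its content lies.

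For comparison, the paper's route is entirely different and does not pass through blowup limits as a starting point. It splits on whether $x$ is a Lebesgue point of $\nabla u$. At Lebesgue points, continuity of $\dist(\nabla u(\cdot),\mathcal{D}\cap\mathcal{S})$ is obtained by approximating $G$ by smooth strongly monotone fields, showing via the area formula and invariance of domain that $\mathcal{D}\setminus\mathcal{S}$ and $\mathcal{S}\setminus\mathcal{D}$ have empty interior (so that a uniformly elliptic ball $B_\rho(q)\subset O_\lambda\cap V_\Lambda$ avoided by $\nabla u$ can always be found), and then invoking the a priori localisation Theorem \ref{p:apriori} and the flatness Proposition \ref{p:Flatness}. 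At non-Lebesgue points, the argument uses Young measures, the uniform $W^{1,2}$ bound on $H(\nabla u_m)$ away from $\mathcal{D}$, the annulus-energy lemma of \cite{SV}, the Hartman--Nirenberg maximum principle, and finally the duality $G^*$ to upgrade localisation near $\mathcal{D}$ to localisation near $\mathcal{D}\cap\mathcal{S}$. To make your strategy work you would have to reprove essentially all of this to establish the dichotomy, so the proposal as written does not constitute a proof.
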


If $x\in B_1$ is such that $\dist(\nabla u(x),D\cap S) >0$, then thanks to Theorem \ref{T: LooseTheorem}  we know that in a neighborhood of $x$ the gradient $\nabla u$ takes values in an open set disjoint from $\mathcal{D}\cap \mathcal{S}$, hence $u$ is $C^1$ in that neighborhood due to \cite[Theorem 1.1]{DSS}. Hence, a direct Corollary of Theorem \ref{T: LooseTheorem} provides a description of the blow ups of $u$ : 

\begin{Corollary}
\label{th:Blowupversion}
Let $G : \R^2 \to \R^2$ continuous and strictly monotone. Then, for any Lipschitz solution of \eqref{equation div}, and $x_0 \in  B_1 $ : 
\begin{itemize}
\item Either there is a unique linear blow up limit, that is there exists $p\in \R^2$ such that : \[ \frac{u(x_0 + \delta x)-u(x_0)}{\delta} \underset{\delta \to 0}{\longrightarrow} \langle p,x \rangle \] 
\item Or any blowup limit : \[ v(x) := 
\lim \frac{u(x_0 + \delta x)-u(x_0)}{\delta}  \] along a subsequence $\delta \to 0$  satisfies $\nabla v \in \mathcal{D}\cap \mathcal{S}$ a.e.
\end{itemize}
\end{Corollary}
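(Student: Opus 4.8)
The plan is to apply Theorem~\ref{T: LooseTheorem} at the point $x_0$ and distinguish its two alternatives: the ``continuous'' one will yield the linear blow-up directly through \cite[Theorem~1.1]{DSS}, while the ``vanishing average'' one will yield $\nabla v\in\mathcal{D}\cap\mathcal{S}$ a.e.\ after a blow-up and compactness argument. Throughout, let $L$ denote the Lipschitz constant of $u$, $d(x):=\dist(\nabla u(x),\mathcal{D}\cap\mathcal{S})$, and $u_\delta(x):=\delta^{-1}\bigl(u(x_0+\delta x)-u(x_0)\bigr)$, so that $\nabla u_\delta(x)=\nabla u(x_0+\delta x)$ for a.e.\ $x$ and the $u_\delta$ are uniformly $L$-Lipschitz. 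Suppose first that $d$ is continuous at $x_0$ (the first alternative of Theorem~\ref{T: LooseTheorem}), with value $\ell$ there. If $\ell>0$, then on some ball $B_r(x_0)$ the gradient $\nabla u$ takes values a.e.\ in the compact set $\{\xi:|\xi|\le L,\ \dist(\xi,\mathcal{D}\cap\mathcal{S})\ge\ell/2\}$, which is disjoint from $\mathcal{D}\cap\mathcal{S}$; so, exactly as in the remark preceding the statement, \cite[Theorem~1.1]{DSS} gives $u\in C^1(B_r(x_0))$, and then for $\delta$ small $u_\delta(x)=\int_0^1\langle\nabla u(x_0+t\delta x),x\rangle\,dt\to\langle\nabla u(x_0),x\rangle$ locally uniformly as $\delta\to0$, which is the first alternative with $p=\nabla u(x_0)$. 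If $\ell=0$, or if $d$ fails to be continuous at $x_0$, then in both cases $\frac{1}{|B_\delta|}\int_{B_\delta(x_0)}d\,dy\to0$ as $\delta\to0$ (by continuity in the first case, by Theorem~\ref{T: LooseTheorem} in the second), which is the situation treated next.

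So assume $\frac{1}{|B_\delta|}\int_{B_\delta(x_0)}d\,dy\to0$ and let $v$ be a blow-up limit along some $\delta_j\to0$. After extracting a further subsequence (not renamed), $u_{\delta_j}\to v$ locally uniformly and, by the uniform Lipschitz bound, $\nabla u_{\delta_j}\rightharpoonup\nabla v$ weakly-$\ast$ in $L^\infty_{\loc}$, with $v$ again $L$-Lipschitz. The change of variables $y=x_0+\delta_j x$ gives, for every fixed $R>0$,
\[
\frac{1}{|B_R|}\int_{B_R}\dist\bigl(\nabla u_{\delta_j}(x),\mathcal{D}\cap\mathcal{S}\bigr)\,dx
=\frac{1}{|B_{R\delta_j}(x_0)|}\int_{B_{R\delta_j}(x_0)}d(y)\,dy,
\]
which tends to $0$ as $j\to\infty$ by hypothesis; hence $\dist(\nabla u_{\delta_j}(\cdot),\mathcal{D}\cap\mathcal{S})\to0$ in $L^1(B_R)$ for every $R$. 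The remaining and main point is to upgrade the weak-$\ast$ convergence of the gradients to a.e.\ convergence, which is needed precisely because $\mathcal{D}\cap\mathcal{S}$ is not convex, so the constraint cannot otherwise be passed to a weak limit.

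For this I would invoke the classical monotonicity argument. Each $u_{\delta_j}$ solves \eqref{equation div} on $B_{r_0/\delta_j}(0)$ with $r_0:=\dist(x_0,\partial B_1)$, hence on $B_{2R}$ for $j$ large; testing the equations for $u_{\delta_j}$ and $u_{\delta_k}$ against $\phi^2(u_{\delta_j}-u_{\delta_k})$, with $\phi$ a cutoff equal to $1$ on $B_R$ and supported in $B_{2R}$, and subtracting, the term carrying $\nabla\phi$ tends to $0$ because $u_{\delta_j}-u_{\delta_k}\to0$ uniformly while $|G(\nabla u_{\delta_j})|\le\sup_{|\xi|\le L}|G(\xi)|$, so $\int_{B_{2R}}\phi^2\langle G(\nabla u_{\delta_j})-G(\nabla u_{\delta_k}),\nabla u_{\delta_j}-\nabla u_{\delta_k}\rangle\,dx\to0$ as $j,k\to\infty$. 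Since $G$ is continuous and strictly monotone, for each $\ep>0$ there is $m(\ep)>0$ with $\langle G(\xi)-G(\zeta),\xi-\zeta\rangle\ge m(\ep)$ whenever $|\xi|,|\zeta|\le L$ and $|\xi-\zeta|\ge\ep$ (the infimum of a positive continuous function over a compact set), and combined with $|\nabla u_{\delta_j}|\le L$ a.e.\ this forces $|\{x\in B_R:|\nabla u_{\delta_j}-\nabla u_{\delta_k}|\ge\ep\}|\to0$; thus $(\nabla u_{\delta_j})_j$ is Cauchy in measure on every ball, converges in measure, and a.e.\ along a subsequence, necessarily to the weak-$\ast$ limit $\nabla v$, whence $\nabla u_{\delta_j}\to\nabla v$ a.e.\ (the full sequence, by the usual subsequence argument). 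Consequently $\dist(\nabla u_{\delta_j}(x),\mathcal{D}\cap\mathcal{S})\to\dist(\nabla v(x),\mathcal{D}\cap\mathcal{S})$ for a.e.\ $x$, while this sequence also tends to $0$ in $L^1_{\loc}$, so $\dist(\nabla v(\cdot),\mathcal{D}\cap\mathcal{S})=0$ a.e.; since $\mathcal{D}\cap\mathcal{S}$ is closed, being an intersection of the closed sets in \eqref{eq:DS}, we conclude $\nabla v\in\mathcal{D}\cap\mathcal{S}$ a.e., which is the second alternative. The genuine difficulty is concentrated in this strong-convergence step, all the rest being immediate from Theorem~\ref{T: LooseTheorem} and \cite{DSS}; and even this step is the standard compactness for strictly monotone divergence-form equations, simplified here by the a priori $L^\infty$ bound on $\nabla u_{\delta_j}$, which removes any need for growth assumptions on $G$.
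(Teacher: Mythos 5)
Your proposal is correct, and it is worth noting that it is actually more complete than what the paper records: the paper calls this a ``direct Corollary'' of Theorem~\ref{T: LooseTheorem} and only spells out the first alternative (via \cite[Theorem 1.1]{DSS}), leaving implicit the passage from ``$\frac{1}{|B_\delta|}\int_{B_\delta(x_0)}\dist(\nabla u,\mathcal{D}\cap\mathcal{S})\to 0$'' to ``$\nabla v\in\mathcal{D}\cap\mathcal{S}$ a.e.\ for every blow-up limit $v$.'' That passage is genuinely non-trivial, since $\mathcal{D}\cap\mathcal{S}$ is not convex and weak-$\ast$ convergence of $\nabla u_{\delta_j}$ alone would only constrain $\nabla v$ to the convex hull. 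You close this gap with the classical Minty-type compactness for strictly monotone operators: testing the equations for $u_{\delta_j}$ and $u_{\delta_k}$ against $\phi^2(u_{\delta_j}-u_{\delta_k})$, using the uniform Lipschitz bound to get a positive lower bound $m(\ep)$ on the monotonicity pairing over the compact range of the gradients, and deducing that $(\nabla u_{\delta_j})$ is Cauchy in measure, hence that the rescaled gradients converge a.e.\ to $\nabla v$. The paper's own machinery in Section~6 is phrased instead in terms of Young measures generated by $(\nabla u_{\delta_j})$ (Proposition~\ref{p:supportYoung} and its dual version), showing the measures are supported in $\mathcal{D}$ and $\mathcal{S}$; your argument in effect shows these Young measures are Dirac masses at $\nabla v(x)$, which is exactly what is needed to convert support information into the pointwise statement of the Corollary. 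The two routes are equivalent in substance, but yours is self-contained and makes explicit the step the paper elides; the only cosmetic imprecision is the remark that the ``full sequence'' converges a.e., where convergence in measure (or a.e.\ convergence along a further subsequence) already suffices to identify $\dist(\nabla v,\mathcal{D}\cap\mathcal{S})$ with the $L^1_{\loc}$ limit $0$.
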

Under additional assumptions on $\mathcal{D}\cap \mathcal{S}$, one can hope to use this gradient localization property in order to further characterize the possible blowup limits. This idea is exploited in \cite{LamyTione} to prove partial $C^1$ regularity results.

From the point of view of the standard theory of elliptic PDE, it is reasonable to expect continuity of the function $\dist(\nabla u,\mathcal{D} \cup \mathcal{S})$, since the equation is uniformly elliptic if $\nabla u$ takes values in an open set contained in $\R^2\setminus (\mathcal{D}\cup \mathcal{S})$. In two dimensions, the estimates of \cite{DSS} even provide continuity of $\nabla u$ if it takes values in an open set contained in the possibly much larger $\R^2\setminus (\mathcal{D}\cap \mathcal{S})$, and continuity of $\dist(\nabla u,\mathcal{D}\cap \mathcal{S})$ is proved in \cite{LacLam} under the topological condition
\begin{equation}
\label{eq: Connexité condition}
 \R^2 \setminus \mathcal{N}^{\e} \left( \mathcal{D}\cap \mathcal{S} \right) \text{ is connected for small enough } \e>0, 
\end{equation} where $\mathcal{N}^\e(A)$ stands for the open $\e$-neighborhood of a set $A \subset \R^2$. We will discuss below several other previous works which obtained similar conclusions under some structural conditions on $\mathcal{D}\cap \mathcal{S}$. Our Theorem \ref{T: LooseTheorem} completely removes any condition on $ \mathcal{D}\cap \mathcal{S}$, but only provides a slightly weaker conclusion of approximate continuity.\\

Let us mention some simple examples for which Theorem \ref{T: LooseTheorem} provides new information. First, it applies to the counterexample to full $C^1$ regularity from \cite[Theorem 1.5]{LacLam}. Second, if $G$ is the gradient of a strictly convex radial function, $G=\nabla F$ where $F(x)=\phi(|x|)$ for some convex increasing $\phi\in C^1([0,\infty))$ such that $\phi'(0)=0$, and if one assumes in addition that $\phi$ is $C^2$ away from a finite number of radii $0<r_1<\cdots < r_N$, then, any Lipschitz minimizer $u$ of \[ \int_{B_1} F(\nabla u)       \] is either $C^1$ at $0$, or any blowup limit $v(x)=\lim \frac{u(\delta x)-u(0)}{\delta}$ along a sequence $\delta\to 0$ satisfies $|\nabla v|\in \lbrace r_j\rbrace$ a.e.

\subsection{Related works}
The closest result to the present work is contained in \cite{DSS}, and actually, our proof strongly relies on propositions already established in \cite{DSS}. In their article, De Silva and Savin studied a minimization problem of the type \eqref{eq: Varformula} in the framework of the obstacle problem, where $F$ is strictly convex and the minimizer is subject to the gradient constraint \( \nabla u \in N \), for some (closed) convex polygon \( N \subset \mathbb{R}^2 \), and the set $ \mathcal{D} \cap \mathcal{S} $ may contain the whole boundary $\partial N$ of that polygon. This allows minimizers to exhibit gradient discontinuities within \( B_1 \), even if \( F \) is smooth in the interior of \( N \). Their proof provides a detailed description of the behavior of such singularities. In particular, they are able to show that at any point of discontinuity $x_0$ of \( \nabla u \) and any $\e>0$, there exists $\delta>0$ such that $\nabla u(B_{\delta}(x_0)) \subset \mathcal{N}^{\e}(\partial N)$. Consequently, \( \operatorname{dist}(\nabla u, \mathcal{D} \cap \mathcal{S}) \) is continuous. 

In our case, we do not assume that \( \mathcal{D} \cap \mathcal{S} \) forms the boundary of a convex set. No structural assumption is required to establish regularity. It is worth noting that convexity assumptions regarding the "bad" set is crucial in their proof, and more generally in a lot of known results of this nature. 

For instance, let us mention \cite{SV}, where the authors studied the equation \( \operatorname{div}(\nabla F(\nabla u)) = f \) in two dimensions, with \( f \) belonging to a suitable \( L^p \)-space and \( F \) being a convex function that vanishes entirely within \( B_1 \). A typical example is \( F := (\lvert \cdot \rvert^2 - 1)_+ \). In this context, although \( \nabla F \) is not strictly monotone, the convexity of the set where \( F = 0 \) plays a key role in their main result. Specifically, they prove that \( H(\nabla u) \) is continuous for any continuous function \( H \) vanishing on \( B_1 \). This result was later extended in \cite{CF14a} to higher dimensions and to any function \( F \) vanishing on a \emph{strictly convex} set \( E \). Looking back at the example we gave above, we see that none of this theorems apply here. It appears challenging to avoid such structural assumptions, especially because the heart of those proofs relies on the existence of a non negative convex function $\Phi$ vanishing in the set where there is no ellipticity. The key is that $\Phi(\nabla u)$ is a subsolution of the equation because of the convexity of $\Phi$ and since $\Phi$ is $0$ in the bad set, one can adapt tools from the nondegenerate elliptic machinery
in order to obtain regularity of $\Phi(\nabla u)$. Such a function $\Phi$ does not exists in general in our framework, which is one of the main difficulties. \\
Some results, however, are available when convexity is relaxed. For example, in \cite{Lledos}, Lledos
obtained a similar conclusion in higher dimensions for the distance of
$\nabla u$ to the larger set $ \mathcal{D}\cup \mathcal{S}$ (instead of $\mathcal{D}\cap \mathcal{S}$ here), provided
it is contained within a two-dimensional plane, and any small
neighborhood of its connected components is simply connected. This condition needs to be compared to \eqref{eq: Connexité condition}, which is the only case where the continuity of $x \mapsto \dist(\nabla u(x),\mathcal{D}\cap \mathcal{S})$ is known. To understand why this condition is hard to remove in order to obtain continuity everywhere for the distance map, one can see \cite[Section 5]{CM}, or \cite[Remark 2.7]{LacLam}.

\vspace{1em}

Let us summarize the differences between the present work and existing results:
\begin{itemize}
\item We impose no conditions on the "bad" set \( \mathcal{D} \cap \mathcal{S} \) when $G$ is strictly monotone. In particular, we avoid any convexity-type assumptions or requirements for the neighborhoods of \( \mathcal{D} \cap \mathcal{S} \) to be simply connected.
    \item For \( G \) strictly monotone, we establish regularity for \( \operatorname{dist}(\nabla u, \mathcal{D} \cap \mathcal{S}) \), not just \( \operatorname{dist}(\nabla u, \mathcal{D}) \) (as in \cite{SV}, \cite{CF14a}), or \( \operatorname{dist}(\nabla u, \mathcal{D} \cup \mathcal{S}) \) (as in \cite{Lledos}).
    
    \item Our result is purely two-dimensional, we only consider zero right hand side, and we are not able to prove continuity everywhere for $x \mapsto \dist(\nabla u(x),\mathcal{D}\cap \mathcal{S})$.
\end{itemize}

\subsection{Strategy} We reduce Theorem \ref{T: LooseTheorem} to the case $x=0$ using the translation invariance of the equation. We separate two case whether or not $0$ is a Lebesgue point of $\nabla u$. If $0$ is a Lebesgue point, this means that $u$ is very close to a linear function with slope $p$ at some scale $\rho$ (see Proposition \ref{Contprop}). Using a result of \cite{DSS}, this implies that at scale $\rho/2$, the image $\nabla u(B_{\rho/2}) $ does not touch any ball $B_{\eta}(q)$ included in an elliptic region provided $p \notin B_{2\eta}(q)$. The issue is to find such a ball; we show that it always exists because of the strict monotonicity of $G$, see Lemma \ref{structure}.  At this point, we can use the localisation Theorem \ref{p:apriori} proved in \cite{LacLam} to infer, for $r>0$ the existence of $\delta>0$ such that $ \nabla u(B_{\delta \rho/2}) \subset B_{r}(p)$ if $p$ is far from the bad set, or $ \nabla u(B_{\delta \rho/2}) \subset \mathcal{N}^r(\mathcal{D}\cap \mathcal{S}) $ if $p$ is in the bad set. In any case, the oscillations of the distance to $\mathcal{D}\cap \mathcal{S}$ is small, and this provides continuity. \\
If $0$ is not a Lebesgue point, the argument is based on \cite[Lemma 5]{SV}, which relates superlevel sets of a given $H^1$ function in a two-dimensional disk to its Dirichlet energy in annuli. We combine this property with an energy estimate away from $\mathcal{D}$, due to \cite{DSS}, and with the maximum principle, to estimate the set where $\nabla u$ lies far from $ \mathcal{D}$, and eventually from $\mathcal{D}\cap \mathcal{S}$ thanks to a duality used already in \cite{LacLam}.

\subsection{Plan}
In section 2, we recall the approximation procedure and the a priori estimate of \cite{LacLam}. In section 3, we prove a topological result on the sets $\mathcal{D}$ and $\mathcal{S}$. Section 4 is devoted to the proof of the localisation result we need to prove the first part of Theorem \ref{T: LooseTheorem}. In section 5, we prove the continuity of $x \mapsto \dist(\nabla u(x),\mathcal{D}\cap \mathcal{S})$ at any Lebesgue point $x$ of $\nabla u$. In section 6, we prove the second part of Theorem \ref{T: LooseTheorem} concerning blow ups at non Lebesgue point of $\nabla u$, it can be read without section 3,4,5. Finally, we give in Appendix the proof of a Lemma stated in \cite{DSS} and the proof of the a priori estimate of \cite{LacLam}.

\section{Preliminary results}

\subsection{Basic Definitions and notations}

Those are the features of a strictly monotone, continuous field $G$ subsequently used in our analysis : 

\begin{itemize}
\item
the \emph{modulus of monotony} $\omega_G\colon (0,\infty)\to (0,\infty)$, given by
\begin{align*}
\omega_G(t)=\inf_{|\xi-\zeta| > t} \langle G(\xi)-G(\zeta),\xi-\zeta\rangle,
\end{align*}
\item the open sets $O_\lambda(G)$, $V_\Lambda(G)$ given, as in \cite{DSS},
 by
\begin{align}
O_\lambda(G)
&
=
\interior
\left\lbrace
\xi\in\R^2\colon
\liminf_{\vert \zeta \vert \to 0} \frac{\langle G(\xi +\zeta)-G(\xi),\zeta \rangle}{\vert \zeta \vert^2} \geq \lambda
\right\rbrace
\, ,
\label{eq:Olambda}
\\
V_\Lambda(G)
&
=
\interior
\left\lbrace
\xi\in\R^2\colon
\liminf_{\vert \zeta \vert \to 0} \frac{\langle G(\xi+\zeta)-G(\xi),\zeta \rangle}{\vert G(\xi+\zeta)-G(\xi) \vert^2} \geq \frac{1}{\Lambda}
\right\rbrace
\, ,
\label{eq:VLambda}
\end{align}
for $\lambda,\Lambda>0$, where $\interior A$ denotes the topological interior of $A\subset\R^2$. The relevance of this sets lies on the equalities : \[ \mathcal{D}= \R^2 \setminus \bigcup_{\lambda>0} O_{\lambda} \quad \quad \mathcal{S} = \R^2 \setminus\bigcup_{\Lambda>0}V_{\Lambda}                \]
\end{itemize}

In the approximation, we will work with \emph{strongly monotone} vector field $G$, that is, there exists $C>0$ such that
\begin{align}\label{eq:strongmonot}
C \langle G(\xi)-G(\zeta),\xi-\zeta\rangle \geq |\xi-\zeta|^2 + |G(\xi)-G(\zeta)|^2
\end{align}

\subsection{A priori estimates for smooth solutions}

The following Theorem \ref{p:apriori} is referred to as the "localisation Theorem" for smooth solutions. Heuristically, it provides a scale $\delta$ such that if one knows that the image of $B_1$ by a smooth solution $u$ does not touch a small ball in a region of type $O_{\lambda}$ or $V_{\Lambda}$ (see \eqref{eq:Olambda},\eqref{eq:VLambda}), then, at scale $\delta$, $\nabla u$ has to localise in an elliptic region in the connected component containing $O_{\lambda}$ (or $V_{\Lambda}$), or $\nabla u(B_{\delta}) $ is outside this connected component. This Theorem was obtained in \cite{LacLam} based on the work of \cite{DSS}. See \cite[Proposition 2.1 and Remark 2.7]{LacLam}. Since the statement is slightly different in this context, we give a proof in the appendix. \\ 

Until the end, the \textbf{open} $r$-neighborhood of a set $A$ is written $\mathcal{N}^r(A)$. 
\begin{theorem}\label{p:apriori}
Let $G \colon \mathbb{R}^2 \to \mathbb{R}^2 $ smooth and strongly monotone.
Assume that there exist $\lambda,\Lambda, M > 0$ and an open set $U$ such that
\begin{align*}
\overline {B_{2M}} \subset  V_{\Lambda}(G) \cup O_{\lambda}(G) \cup U.
\end{align*} 
Let $u$ any smooth solution of $\dv(G(\nabla u))=0$ in $B_1$
with $|\nabla u|\leq M$. Assume that for some connected component $\mathcal{C}$ of $ \overline{B_{2M}} \setminus U $, there exists a ball $B_{\rho}(q) \subset \mathcal{C} $  such that 
\[ \nabla u(B_1) \cap B_{\rho}(q) = \emptyset        \]
Then, for any Lebesgue number $\eta\in (0,\rho)$ of the above open covering ( 
any ball $B_\eta(\xi)$ with $ \vert \xi \vert \leq 2M$ must be contained in $V_{\Lambda}(G)$, $O_{\lambda}(G)$ or $ U $ )  there exists $\delta > 0$ such that either :
\begin{align*}
\nabla u(B_{\delta}) \subset B_{\eta}(p), \text{ for some } p \in \mathcal{C}
\end{align*}
or
\begin{align*}
\nabla u (B_{\delta}) \subset \R^2 \setminus \mathcal{C}, 
\end{align*}
 where
$\delta>0$ depends on
\begin{itemize}
\item The Lebesgue number $\eta$ ;
\item  the gradient bound $M$ and 
the ellipticity constants $\lambda,\Lambda$;
\item the integrals $\int_{B_1}|\nabla u|^2\, dx $ and $\int_{B_1} |G(\nabla u)|^2\, dx$;
\item the modulus of monotony $\omega_G$ via any $c>0$ such that
$\omega_G(t)/t\geq c$ for all
 $t \in [\eta/4,M+\eta]$.
\end{itemize}
\end{theorem}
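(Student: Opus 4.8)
The plan is to adapt the scheme of \cite[Proposition 2.1 and Remark 2.7]{LacLam}, which is itself built on the degenerate-elliptic compactness and rigidity theory of \cite{DSS}; since the present statement differs only in the precise shape of the covering hypothesis, I will describe the structure and indicate where the minor changes occur. The starting observation is that every hypothesis is stable under the rescalings $u_\sigma(x):=\sigma^{-1}u(\sigma x)$, $\sigma\in(0,1)$: $u_\sigma$ solves $\dv(G(\nabla u_\sigma))=0$ on $B_{1/\sigma}\supset B_1$, satisfies $|\nabla u_\sigma|\le M$, has $\int_{B_1}|\nabla u_\sigma|^2\le\int_{B_1}|\nabla u|^2$ and $\int_{B_1}|G(\nabla u_\sigma)|^2\le\int_{B_1}|G(\nabla u)|^2$, and the missed ball persists since $\nabla u_\sigma(B_{1/\sigma})=\nabla u(B_1)$, hence $\nabla u_\sigma(B_{1/\sigma})\cap B_\rho(q)=\emptyset$. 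So it suffices to produce, from these data, a definite scale $\sigma_0\in(0,1)$ at which a quantified one-step improvement holds, and then iterate. We may further reduce to the case in which the relevant elliptic set is of type $O_\lambda$: since $\eta<\rho$ we have $B_\eta(q)\subset B_\rho(q)\subset\mathcal{C}$, which is disjoint from $U$ and centered at $|q|\le 2M$, so by the Lebesgue-number property $B_\eta(q)\subset O_\lambda(G)$ or $B_\eta(q)\subset V_\Lambda(G)$; in the second case, passing to the stream function $w$ with $\nabla^\perp w=G(\nabla u)$ turns the equation into a dual first-order system whose degenerate sets are those of $G^{-1}$, interchanging the roles of $O_\lambda$ and $V_\Lambda$ — this is the duality already used in \cite{LacLam}. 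Since $O_\lambda(G)$ is open we then have $\overline{B_{\eta/2}(q)}\subset O_\lambda(G)$.

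The core is a one-scale trapping lemma — a lemma of \cite{DSS}, whose proof we reproduce in the appendix — roughly of the following form: there exists $\sigma_0=\sigma_0(\eta,M,\lambda,\Lambda,\ldots)\in(0,1)$ such that if $v$ solves the equation on $B_1$ with $|\nabla v|\le M$ and $\nabla v(B_1)$ misses a ball $B_r(q')$ with $\overline{B_{2r}(q')}\subset O_\lambda(G)$, $r\ge\eta/4$ and $q'\in\overline{B_{2M}}$, then one of the following holds: $\nabla v(B_{\sigma_0})$ is contained in a ball of radius $<\eta$; or $\nabla v(B_{\sigma_0})$ misses a strictly larger ball $B_{r''}(q'')$ with $r''\ge(1+c_0)r$ for some $c_0=c_0(\lambda,\Lambda,M)>0$, $\overline{B_{2r''}(q'')}\subset O_\lambda(G)$ and $q''\in\overline{B_{2M}}$; or $\nabla v(B_{\sigma_0})$ is disjoint from the connected component of $\overline{B_{2M}}\setminus U$ containing $q'$. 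The proof of this lemma is where the PDE enters: one uses the energy estimate of \cite{DSS} for $\nabla v$ away from $\mathcal{D}$ (which is what makes $\int_{B_1}|\nabla v|^2$ and $\int_{B_1}|G(\nabla v)|^2$ appear in the constants), the quantitative strong monotonicity at the scales involved (which makes the modulus $\omega_G$ appear, via any $c$ with $\omega_G(t)/t\ge c$ on $[\eta/4,M+\eta]$), a Hopf/boundary-Harnack-type argument in the uniformly elliptic region near the point of $\nabla v(B_1)$ closest to $q'$, showing that the supporting line of the missed ball there strictly moves inward, and the two-dimensional topological fact — using that $B_{\sigma_0}$ is simply connected and that the map $x\mapsto\nabla v(x)-q'$ admits a continuous argument — that a bounded-gradient solution cannot have its gradient image wind around the missed value (an argument-principle type input).

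Granting this, we iterate: starting from $B_{\eta/4}(q)$ apply the lemma to $u,u_{\sigma_0},u_{\sigma_0^2},\dots$. At each step the missed ball either grows by the factor $1+c_0$ — and the hypotheses persist with the same constants, since bounded gradient and energies are inherited under zooming in, and the $\eta$-enlargement of the current missed ball stays in $O_\lambda(G)$ as long as its center is at distance $>\eta$ from $U$ inside $\overline{B_{2M}}$, again by the Lebesgue-number property — or we have already reached one of the two terminal alternatives. Because the missed balls are contained in the bounded set $\overline{B_{2M}}$ and their radii grow geometrically, this can repeat only a number of times $N\lesssim\log(M/\eta)$; after $N$ steps the connected set $\nabla u(B_{\sigma_0^N})$ is therefore confined to a region of diameter $<\eta$, which is either a ball inside $\mathcal{C}$ — giving $\nabla u(B_\delta)\subset B_\eta(p)$ with $p\in\mathcal{C}$ — or has been pushed out of $\mathcal{C}$ — giving $\nabla u(B_\delta)\subset\R^2\setminus\mathcal{C}$. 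Taking $\delta:=\sigma_0^N$ yields the stated dependence of $\delta$ on $\eta$, $M$, $\lambda$, $\Lambda$, the two energies and $\omega_G$.

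The main obstacle is the one-scale lemma of the second paragraph, and within it the fact that inside $\mathcal{C}$ we only control ellipticity on \emph{one} side (either from below, $O_\lambda$, or from above, $V_\Lambda$, never simultaneously): running the ``supporting line improves'' estimate together with the compactness it rests on under this one-sided hypothesis is exactly the technical contribution of \cite{DSS}, and is made to work by combining the hodograph duality with the energy estimate away from $\mathcal{D}$. A secondary, bookkeeping, obstacle is to guarantee that the successive missed balls keep satisfying the lemma's hypotheses with uniform constants, and that the failure of the ``trapped in a small ball'' alternative genuinely forces the clean exit $\nabla u(B_\delta)\subset\R^2\setminus\mathcal{C}$ rather than some intermediate configuration stuck near $\partial\mathcal{C}$; this is where the Lebesgue number $\eta$ serves as a safety buffer and where the connectedness of $\nabla u(B_\delta)$ and of $\mathcal{C}$ in the plane are used.
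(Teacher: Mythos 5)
Your overall architecture (rescale, apply a one-step dichotomy coming from the localisation lemmas of \cite{DSS} and \cite{LacLam}, iterate until either the gradient image is trapped in an $\eta$-ball or it has left the component) is the same as the paper's, and your list of what each constant depends on is accurate. However, there is a genuine gap in your propagation mechanism. Your one-scale lemma requires the new missed ball $B_{r''}(q'')$ to satisfy $\overline{B_{2r''}(q'')}\subset O_\lambda(G)$ (or $V_\Lambda(G)$), and your termination argument needs the radii to grow geometrically all the way up to the scale of $\overline{B_{2M}}$, which is how you get $N\lesssim\log(M/\eta)$. But the Lebesgue number property only guarantees that balls of radius at most $\eta$ centered in $\overline{B_{2M}}$ lie entirely in one of $O_\lambda(G)$, $V_\Lambda(G)$, $U$; once $r''$ exceeds a fixed multiple of $\eta$, the enlarged ball $\overline{B_{2r''}(q'')}$ may straddle an $O_\lambda$ region and a $V_\Lambda$ region inside $\mathcal{C}$ without being contained in either, and the hypotheses of your lemma fail. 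So the growth stalls at radius comparable to $\eta$, far short of covering a component whose diameter can be of order $M\gg\eta$, and the iteration terminates in neither of the two admissible alternatives. The paper avoids this by never letting the missed ball grow: it fixes a finite covering of $\overline{B_{2M}}\setminus U$ by balls $B^k_{\eta/4}$, each of whose $\eta$-enlargements is in $O_\lambda$ or $V_\Lambda$, and uses the one-step dichotomy (either $\nabla u(B_\delta)\subset B^k_{\eta}$, or $\nabla u(B_\delta)$ misses $B^k_{3\eta/4}$ and hence misses every \emph{overlapping} ball $B^j_{\eta/4}$ of the covering) to propagate the missed property spatially from ball to neighbouring ball. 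The iteration count is then bounded by the cardinality $K\lesssim M^2/\eta^2$ of the covering, and after at most $K$ rescalings either the first alternative has occurred or the missed balls exhaust the connected component $\mathcal{C}$, giving $\nabla u(B_{\delta^K})\subset\R^2\setminus\mathcal{C}$.

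A secondary issue: your once-and-for-all reduction to the $O_\lambda$ case via duality does not survive the iteration. Even if the initial ball $B_\eta(q)$ lies in $O_\lambda(G)$, the subsequent missed balls encountered while walking across $\mathcal{C}$ may lie in $V_\Lambda(G)$ instead, so the two cases must be handled at every step (the paper keeps two separate localisation lemmas, one for each type of ball, and applies whichever is relevant at each stage); applying the hodograph duality globally would replace $u$ by the stream function and change the solution whose gradient image you are tracking, which is not what the statement asks for.
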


In order to apply Theorem \ref{p:apriori}, we need to find a ball $B_{\rho}(q)$ satisfying the condition $\nabla u (B_1) \cap B_{\rho}(q) = \emptyset$. The following Lemma is precisely used to find such a ball when we know that $u$ is close to an affine function. It is the claim 8.2 of \cite[Proposition 6.2]{DSS}, we sketch its proof in the appendix.
\begin{lemma}
\label{Lemme inter vide}
Let $G$ smooth strongly monotone, and $u$ a smooth solution of $\dv(G(\nabla u))=0$ with Lipschitz bound $M$. 
Let $p,q \in \R^2 $ and $\rho >0 $ such that $B_{\rho}(q) \subset O_{\lambda}\cap V_{\Lambda} $ and $p \notin B_{\rho}(q)$. There exists $\e := \e(\rho,\lambda,\Lambda) $ such that for any smooth solution $u$ satisfying
\[ \vert u(x) - l_p(x) \vert \leq \e \text{ for all } x \in B_1            \] for some affine map $l_p$ with $\nabla l_p = p $, 
then, $\nabla u(B_{1/2}) \cap B_{\rho/2}(q)=\emptyset$.
\end{lemma}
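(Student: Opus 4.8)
The plan is to exploit the maximum principle for the two scalar "derivative-like" quantities attached to a smooth solution $u$ of $\dv(G(\nabla u))=0$, together with the compactness coming from the gradient bound $M$ and the strong monotonicity of $G$. Concretely, since $G$ is smooth and strongly monotone and $u$ is a smooth solution, the directional derivatives $w_e := \langle \nabla u, e\rangle$ solve the linearized equation $\dv(\nabla G(\nabla u)\nabla w_e)=0$, which is uniformly elliptic \emph{on the set where $\nabla u$ stays in a fixed ball} $\overline{B_{2M}}$. The hypothesis $B_\rho(q)\subset O_\lambda\cap V_\Lambda$ says precisely that on $B_\rho(q)$ the matrix $\nabla G$ is bounded between $\lambda$ and $\Lambda$ (up to adjusting constants; this is the content of the definitions \eqref{eq:Olambda}--\eqref{eq:VLambda}), so the linearized operator is uniformly elliptic with controlled constants whenever $\nabla u$ takes values there.

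The main step is a contradiction/compactness argument. Suppose the conclusion fails: then there is a sequence of smooth strongly monotone fields $G_k$ with the same $\lambda,\Lambda$ (one may fix $G$ and vary only $u$, but allowing $G_k$ to vary is harmless and matches how Lemma~8.2 is used in \cite{DSS}), affine maps $l_{p}$ with $\nabla l_p=p\notin B_\rho(q)$, and smooth solutions $u_k$ with $|\nabla u_k|\le M$ and $\|u_k-l_p\|_{L^\infty(B_1)}\to 0$, yet $\nabla u_k(B_{1/2})\cap B_{\rho/2}(q)\ne\emptyset$, say $\nabla u_k(x_k)\in B_{\rho/2}(q)$ for some $x_k\in \overline{B_{1/2}}$. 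Since $\|\nabla u_k\|_\infty\le M$ one extracts, along a subsequence, $u_k\to u_\infty$ uniformly on compact subsets of $B_1$; but $u_k\to l_p$ in $L^\infty$, so $u_\infty=l_p$ and $\nabla u_\infty\equiv p$. The difficulty is that $C^0$ convergence of $u_k$ does \emph{not} directly give pointwise convergence of $\nabla u_k$, so one cannot immediately contradict $\nabla u_k(x_k)\to q\ne p$. This is where ellipticity on $B_\rho(q)$ enters.

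The way I would close the gap is the following. First, I would use the energy identity / Caccioppoli-type estimate from \cite{DSS} (the a priori estimate machinery already imported as Theorem~\ref{p:apriori}) to get an $L^2$ bound on $\nabla u_k - p$ on a slightly smaller ball, which tends to $0$ because $u_k\to l_p$ in $L^\infty$ and the energy is controlled by oscillation for solutions. Hence $\nabla u_k\to p$ in $L^2_{\loc}(B_1)$. In particular, the set $E_k := \{x\in B_{3/4}: \nabla u_k(x)\in B_{3\rho/4}(q)\}$ has $|E_k|\to 0$. Now on $E_k$ the linearized equation for, say, $w_k := \langle\nabla u_k, e\rangle$ with $e=(q-p)/|q-p|$ is uniformly elliptic, and $w_k$ jumps from a value close to $\langle p,e\rangle$ (most of $B_{3/4}$) to a value $\ge \langle q,e\rangle - \rho/2 > \langle p,e\rangle + c$ at $x_k$ for a fixed $c=c(\rho)>0$, because $p\notin B_\rho(q)$ forces $|\langle q-p,e\rangle| = |q-p| \ge \rho$. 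The plan is to derive a contradiction from the fact that a subsolution of a uniformly elliptic equation cannot have a large interior "bump" supported on a set of vanishing measure: more precisely, I would like to invoke an interior estimate (De Giorgi / local boundedness, or a barrier argument) on the region where $\nabla u_k\in B_\rho(q)$ — but since that region $E_k$ is not open and $w_k$ is only a subsolution \emph{there}, the cleanest route is to instead track a scalar quantity of the form $\Phi(\nabla u_k)$ with $\Phi$ convex, vanishing outside a neighborhood of $B_{3\rho/4}(q)$ and comparing to $|q-p|^2$ inside, so that $\Phi(\nabla u_k)$ is a globally defined subsolution (convexity of $\Phi$ plus the equation), bounded by a constant, with $\int \Phi(\nabla u_k)\to 0$ yet $\Phi(\nabla u_k)(x_k)\ge c>0$; local boundedness of subsolutions by their $L^1$ norm then gives the contradiction. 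I expect the construction of this convex test function $\Phi$ (or, equivalently, localizing the uniform ellipticity correctly) to be the main obstacle, exactly because — as emphasized in the introduction — such a convex $\Phi$ need not exist globally in our degenerate setting; here however it is only needed \emph{locally near the ball $B_\rho(q)$ which lies in the elliptic region $O_\lambda\cap V_\Lambda$}, which is precisely why the hypothesis $B_\rho(q)\subset O_\lambda\cap V_\Lambda$ is indispensable. Since this is Claim~8.2 of \cite[Proposition 6.2]{DSS}, in the appendix I would simply adapt their barrier/energy argument rather than reprove it from scratch, and the value of $\e=\e(\rho,\lambda,\Lambda)$ comes out of the quantitative versions of the above estimates.
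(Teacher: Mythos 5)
There is a genuine gap, and it sits exactly where you flag the ``main obstacle.'' The convex test function $\Phi$ you propose cannot exist: a convex function on $\R^2$ that is nonnegative and vanishes outside a (bounded) neighborhood of $B_{3\rho/4}(q)$ is identically zero, since any point of $\R^2$ lies on a segment between two points where $\Phi=0$. If you drop convexity, $\Phi(\nabla u_k)$ is no longer a subsolution of the linearized equation, and you only recover a $W^{1,2}$ bound of the type of Lemma \ref{p:W22bound}, which does not control $\Phi(\nabla u_k)$ pointwise at $x_k$. Even granting a subsolution, the De Giorgi local boundedness estimate $\sup_{B_{r/2}(x_k)}\Phi(\nabla u_k)\lesssim\|\Phi(\nabla u_k)\|_{L^1(B_r(x_k))}$ requires the coefficient matrix $\nabla G(\nabla u_k(x))$ to be uniformly elliptic for a.e.\ $x$ in a full \emph{physical} ball around $x_k$; the hypothesis $B_\rho(q)\subset O_\lambda\cap V_\Lambda$ only gives ellipticity where $\nabla u_k$ takes values near $q$, and on most of any such ball $\nabla u_k\approx p$, which may lie arbitrarily deep in the degenerate set. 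Locality in gradient space does not translate into locality in $x$-space, so the estimate you want to invoke is unavailable. A further, secondary issue is that the compactness scheme would at best produce an $\e$ depending on the whole field $G$ (and on its strong monotonicity constant, through the Caccioppoli step), not the quantitative $\e(\rho,\lambda,\Lambda)$ claimed.

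The paper's proof avoids all of this by exploiting the two-dimensional structure rather than De Giorgi theory. The key inputs are: (i) Proposition \ref{PropDSS} (a consequence of $\det\nabla^2 u\le 0$ and the Hartman--Nirenberg maximum principle), which says that if $\nabla u(x_0)=p_1\in B_{\rho/2}(q)$ then each of $\{u<l\}$, $\{u>l\}$ for the tangent plane $l$ at $x_0$ has at least two non-compactly-supported connected components near $x_0$, while the flatness assumption traps one of them in a strip of width $O(\e/|p_0-p_1|)$; and (ii) an explicit barrier $w=\gamma e^{k(x_2-20x_1^2)}-c$ whose \emph{own} gradient is confined to $B_{\rho/2}(0)\subset O_\lambda\cap \tilde V_{1/\lambda}$, so that $\Tr(\nabla^sG(\nabla w)\nabla^2w)>0$ for $k=k(\lambda)$ large --- only ellipticity along the barrier's gradient values is needed, never along $\nabla u$. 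Comparing $u$ with $w$ on a rectangle and using that $w$ increases in $x_2$ then contradicts the existence of the second connected component, and yields the explicit $\e(\rho,\lambda,\Lambda)$. These two ingredients (the level-set dichotomy and a barrier with prescribed small gradient) are the ideas missing from your proposal.
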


One can check the proof in the Appendix for the explicit dependence on $\rho,\lambda,\Lambda$.

\subsection{The approximation}

We recall in this section the procedure to approximate the solution of $\dv(G(\nabla u))=0$ by smooth solutions $\nabla u_{\e}$. All the proofs can be found in \cite{LacLam}. The reader may skip this section in first reading. \\
The following Lemma is the first step in the construction of the approximation. It is a modification of a given field $G$ at infinity, which is needed to control the Lipschitz norm of the approximation $u_{\e}$ independantly of $\e$.

\begin{lemma}\label{l:modif}
Let $G \colon \mathbb{R}^2 \to \mathbb{R}^2 $ a continuous strictly monotone vector field, 
and $M>0$.
Then there exists $\widetilde G\colon\R^2\to\R^2$ a continuous strictly monotone vector field  equal to $G$ in $\overline B_M$ and smooth outside $B_{4M}$, such that
\begin{align*}
\mathcal{D}(\widetilde G)\cap\mathcal S(\widetilde G)\subset \mathcal{D}(G) \cap \mathcal{S}(G) \cap \overline{B_M},
\end{align*}
and
\begin{align*}
&
c\leq \nabla^s \widetilde G(\xi) \leq |\nabla \widetilde G(\xi)|\leq 4c
\quad \forall \xi \in \R^2\setminus B_{4M},
\\
&
|\widetilde G(\xi)|\leq L(1+|\xi|)\quad\forall \xi\in\R^2,
\end{align*}
for some constants $L,c>0$ depending on $M$ and $\|G\|_{L^\infty(B_{4M})}$.
\end{lemma}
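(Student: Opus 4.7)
The construction is piecewise in three radial zones. On $\overline{B_M}$, I would simply set $\widetilde G = G$, which trivially gives the required identity on $\overline{B_M}$ and preserves strict monotonicity there. On the exterior $\R^2\setminus\overline{B_{4M}}$, I would choose an explicit smooth formula of the form $\widetilde G(\xi) = c\xi + h(\xi/|\xi|)$ for a suitable smooth $h\colon S^1\to\R^2$; since the symmetric part of $\nabla(c\xi + h(\xi/|\xi|))$ equals $c\,I$ plus a term of order $\|h\|_{C^1}/|\xi|\leq \|h\|_{C^1}/(4M)$, taking $c$ large enough relative to $\|h\|_{C^1}/M$ yields the two-sided bound $c\leq \nabla^s\widetilde G\leq |\nabla\widetilde G|\leq 4c$, while the linear growth $|\widetilde G|\leq L(1+|\xi|)$ is automatic.

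The core step is the construction in the annulus $M\leq|\xi|\leq 4M$. A convenient ansatz is
\[
\widetilde G(\xi) \;=\; G\bigl(M\xi/|\xi|\bigr) + c\bigl(\xi - M\xi/|\xi|\bigr),
\]
which matches $G$ on $\partial B_M$. To verify strict monotonicity, writing $\xi = r_1 u_1$, $\zeta = r_2 u_2$ with $r_i\geq M$ and expanding using the identities $|\xi-\zeta|^2 = (r_1-r_2)^2 + r_1 r_2|u_1-u_2|^2$ and $\langle u_1-u_2,\xi-\zeta\rangle = \tfrac{1}{2}(r_1+r_2)|u_1-u_2|^2$, one obtains
\[
\langle \widetilde G(\xi)-\widetilde G(\zeta),\xi-\zeta\rangle = c(r_1-r_2)^2 + c\bigl[r_1 r_2 - \tfrac{M}{2}(r_1+r_2)\bigr]|u_1-u_2|^2 + \langle G(Mu_1) - G(Mu_2),\xi-\zeta\rangle.
\]
The bracket is nonnegative for $r_1, r_2\geq M$, and splitting $\xi-\zeta$ into radial and angular parts extracts from the last term a nonnegative contribution controlled by the strict monotonicity of $G$ on $\partial B_M$, plus a cross term bounded by $|r_1-r_2|\cdot\omega(|u_1-u_2|)$, where $\omega$ denotes the modulus of continuity of $G$ on $\overline{B_M}$. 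Choosing $c$ large enough relative to $\omega$ absorbs the cross term into the radial quadratic $c(r_1-r_2)^2$. In the transition band $3M\leq|\xi|\leq 4M$ one smoothly interpolates between this ansatz and the exterior smooth formula by taking $h$ to be a mollification of $u\mapsto G(Mu)-cMu$ fine enough that strict monotonicity is preserved.

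For the remaining properties, the growth bound with $L,c$ depending only on $M$ and $\|G\|_{L^\infty(B_{4M})}$ is clear from the explicit formulas. For $\mathcal{D}(\widetilde G)\cap\mathcal S(\widetilde G)\subset\mathcal D(G)\cap\mathcal S(G)\cap\overline{B_M}$: every point outside $\overline{B_M}$ is interior to some $O_\lambda(\widetilde G)\cap V_\Lambda(\widetilde G)$, because our construction is Lipschitz and uniformly monotone there, and hence is excluded from both $\mathcal D(\widetilde G)$ and $\mathcal S(\widetilde G)$ by the defining formulas \eqref{eq:DS}; on $\overline{B_M}$ the two fields coincide, so the liminf quotients in \eqref{eq:DS} agree.

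The main obstacle is strict monotonicity in the annulus when $G$ is merely continuous: the cross term involves $\omega$, whose behavior near zero is arbitrary, so no single constant $c$ dominates it uniformly. The standard workaround is to first modify $G$ in a thin shell $M<|\xi|<M+\delta$ by a Lipschitz, strictly monotone field agreeing with $G$ on $\partial B_M$ (and leaving $\widetilde G = G$ on $\overline{B_M}$ untouched, as the lemma allows), and then apply the ansatz above to the regularised field. The final verification is a bookkeeping check that neither the Lipschitz modification nor the mollification of $h$ creates points in $\mathcal{D}(\widetilde G)\cap\mathcal S(\widetilde G)$, which follows from the uniform strong monotonicity of the construction outside $\overline{B_M}$.
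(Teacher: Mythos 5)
Your three-zone decomposition and the algebra in the annulus are correct as far as they go, but the proof has a genuine gap exactly at the point you flag, and the flag is not a repair. After splitting $\xi-\zeta=\tfrac{r_1+r_2}{2}(u_1-u_2)+\tfrac{r_1-r_2}{2}(u_1+u_2)$, the only terms available to absorb the cross term $\tfrac{r_1-r_2}{2}\langle G(Mu_1)-G(Mu_2),u_1+u_2\rangle$ are $c(r_1-r_2)^2$ and $\tfrac{r_1+r_2}{2}\langle G(Mu_1)-G(Mu_2),u_1-u_2\rangle$ (the bracket $r_1r_2-\tfrac M2(r_1+r_2)$ vanishes when $r_1=r_2=M$). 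Young's inequality then demands, uniformly over $u_1\neq u_2$ on the sphere, an estimate of the form $|G(Mu_1)-G(Mu_2)|^2\leq 4cM\,\langle G(Mu_1)-G(Mu_2),u_1-u_2\rangle$, which is precisely a uniform $V_\Lambda$-type bound on $G$ along $\partial B_M$ — exactly what a merely continuous strictly monotone $G$ need not satisfy. So the ansatz $\widetilde G(\xi)=G(M\xi/|\xi|)+c(\xi-M\xi/|\xi|)$ genuinely fails to be monotone for general $G$, and no choice of $c$ fixes it; note also that your phrasing ``absorb into $c(r_1-r_2)^2$ alone'' is literally false, since the cross term is linear in $|r_1-r_2|$.

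The proposed workaround does not close this gap: a field that is Lipschitz on the closed shell $\overline{B_{M+\delta}}\setminus B_M$ and agrees with $G$ on $\partial B_M$ forces $G|_{\partial B_M}$ to be Lipschitz, so such an object does not exist in general; and if what is meant is a strictly monotone interpolation between the merely continuous trace $G|_{\partial B_M}$ and a uniformly elliptic field on the outer sphere, that interpolation \emph{is} the content of the lemma, not a standard preliminary. This is why the paper does not attempt an explicit radial formula here but delegates the construction to \cite{LacLam}, where the extension is built so as to control both $O_\lambda$ and $V_\Lambda$ across the matching sphere. A secondary point: for $\xi\in\partial B_M$ the difference quotients defining $\mathcal D(\widetilde G)$ and $\mathcal S(\widetilde G)$ sample $\widetilde G(\xi+\zeta)$ at points outside $\overline{B_M}$ where $\widetilde G\neq G$, so the claim that the liminf quotients ``agree'' on $\overline{B_M}$ needs an argument on the boundary sphere (and one must also handle the closure operation in the definitions of $\mathcal D$ and $\mathcal S$); this is minor compared to the monotonicity issue but should not be waved through.
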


Secondly, we construct the approximation by smoothing the field $\tilde{G}$ obtained previously. The standard theory of elliptic PDE's ensures that the solutions of $\dv(G_{\e}(\nabla u_{\e}))=0$ are smooth.

\begin{lemma}
\label{l:approxG}
Let $G \colon \mathbb{R}^2 \to \mathbb{R}^2 $ a continuous strictly monotone vector field. 
Assume that there exist $M,L\geq 1$, $c >0$ such that $G$ is smooth in $\R^2\setminus B_{4M}$
and
\begin{align*}
&
c\leq \nabla^s  G(\xi) \leq |\nabla  G(\xi)|\leq 4c
\quad \forall \xi \in \R^2\setminus B_{4M},
\\
&
| G(\xi)|\leq L(1+|\xi|)\quad\forall \xi\in\R^2.
\end{align*}
Then there exists a sequence $G_\e$ of smooth and 
strongly monotone \eqref{eq:strongmonot} vector fields such that
$G_\e\to G$ locally uniformly as $\e\to 0$, and
\begin{align*}
&
\nabla^s G_\e(\xi) \geq c
\qquad \forall \xi \in \R^2\setminus B_{5M},
\\
&
| G_\e(\xi)|\leq 2L(1+|\xi|)\qquad\forall \xi\in\R^2\, 
,
\\
&
 \omega_{G_{\e}} \geq \omega_G, 
 \\
&
B_{2\e}(\xi)\subset O_\lambda(G)\;\Rightarrow\; \xi\in O_\lambda(G_\e)\, ,
\\
& 
B_{2\e}(\xi)\subset V_{\Lambda}(G)\;\Rightarrow\; \xi\in V_{\Lambda+\e}(G_\e)\, ,
\end{align*}
for all $\e\in (0,1)$.
\end{lemma}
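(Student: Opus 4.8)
The plan is to take $G_\e := G*\rho_\e + \e\,\mathrm{id}$, where $\rho_\e(x) = \e^{-2}\rho(x/\e)$ for a fixed nonnegative radial mollifier $\rho\in C_c^\infty(\R^2)$ with $\int\rho = 1$ and $\supp\rho\subset\overline{B_1}$. Since $G$ is continuous, $G_\e$ is smooth and $G_\e\to G$ locally uniformly; since convolution against a probability density preserves monotonicity, $G_\e$ is monotone. Several of the required estimates transfer directly. The growth bound holds because $|(G*\rho_\e)(\xi)|\le L(1+|\xi|+\e)$ while the extra term contributes at most $\e|\xi|$, so $|G_\e(\xi)|\le L(1+|\xi|)+\e(L+|\xi|)\le 2L(1+|\xi|)$, using $L\ge1$ and $\e<1$. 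For the modulus of monotony, whenever $|\xi-\zeta|>t$ one has $|(\xi-y)-(\zeta-y)|=|\xi-\zeta|>t$, hence $\langle G(\xi-y)-G(\zeta-y),\xi-\zeta\rangle\ge\omega_G(t)$; averaging against $\rho_\e$ and adding the nonnegative term $\e|\xi-\zeta|^2$ gives $\omega_{G_\e}\ge\omega_G$. Finally, for $|\xi|>5M$ we have $B_\e(\xi)\subset\R^2\setminus B_{4M}$ (because $\e<1\le M$), so $\nabla(G*\rho_\e)(\xi)=(\nabla G)*\rho_\e(\xi)$ and $\nabla^s G_\e(\xi)=(\nabla^s G)*\rho_\e(\xi)+\e\ge c$.

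For the strong monotonicity \eqref{eq:strongmonot}, the crucial observation is that $G*\rho_\e$ is \emph{globally} Lipschitz: on $\{|\xi|>5M\}$ its Jacobian equals $(\nabla G)*\rho_\e$, bounded by $4c$, and on the bounded set $\{|\xi|\le 5M\}$ it equals $G*\nabla\rho_\e$, bounded by $\|G\|_{L^\infty(B_{6M})}\,\|\nabla\rho_\e\|_{L^1}<\infty$. Consequently $G_\e$ is smooth with $\nabla^s G_\e\ge\e$ and $|\nabla G_\e|\le K_\e$ for some finite $K_\e$. In particular $\langle G_\e(\xi)-G_\e(\zeta),\xi-\zeta\rangle\ge\e|\xi-\zeta|^2$, so $G_\e$ is a proper injective local diffeomorphism, hence a diffeomorphism of $\R^2$; its inverse $F_\e$ satisfies $\nabla F_\e=(\nabla G_\e\circ F_\e)^{-1}$, and the elementary inequality $\langle A^{-1}w,w\rangle\ge\e|A^{-1}w|^2\ge(\e/|A|^2)|w|^2$ --- valid for any invertible $A$ with $\langle Av,v\rangle\ge\e|v|^2$ for all $v$ --- gives $\nabla^s F_\e\ge\e/K_\e^2$, i.e. $\langle G_\e(\xi)-G_\e(\zeta),\xi-\zeta\rangle\ge(\e/K_\e^2)\,|G_\e(\xi)-G_\e(\zeta)|^2$. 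Averaging the two lower bounds gives \eqref{eq:strongmonot} with $C=2K_\e^2/\e$.

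It remains to prove the two localization-compatibility implications. Both rest on the following elementary fact from \cite{DSS,LacLam}: if the infinitesimal inequality defining $O_\lambda$ (resp. $V_\Lambda$) holds at every point of an open convex set $\Omega$, then $\langle G(a)-G(b),a-b\rangle\ge\lambda|a-b|^2$ (resp. $\ge\tfrac1\Lambda|G(a)-G(b)|^2$) for all $a,b\in\Omega$; this is proved by a Dini-derivative argument along segments. So if $B_{2\e}(\xi)\subset O_\lambda(G)$, this estimate holds on the convex ball $B_{2\e}(\xi)$. For $\eta\in B_\e(\xi)$ and $|\zeta|$ small (depending on $\eta$), all points $\eta+\zeta-y$ with $|y|\le\e$ lie in $B_{2\e}(\xi)$, hence $\langle(G*\rho_\e)(\eta+\zeta)-(G*\rho_\e)(\eta),\zeta\rangle=\int\langle G(\eta+\zeta-y)-G(\eta-y),\zeta\rangle\rho_\e(y)\,dy\ge\lambda|\zeta|^2$, and adding $\e|\zeta|^2$ shows that the defining inequality of $O_\lambda(G_\e)$ holds throughout $B_\e(\xi)$, so $\xi\in O_\lambda(G_\e)$. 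For $V_\Lambda$ the same scheme works, Jensen's inequality transferring $\tfrac1\Lambda$-cocoercivity to $G*\rho_\e$; writing $p:=(G*\rho_\e)(\eta+\zeta)-(G*\rho_\e)(\eta)$, one needs $\langle p,\zeta\rangle+\e|\zeta|^2\ge\tfrac1{\Lambda+\e}|p+\e\zeta|^2$, which after expansion reduces to $\tfrac{\e}{\Lambda}|p|^2+\e\Lambda|\zeta|^2\ge2\e\langle p,\zeta\rangle$, true by the AM--GM inequality; hence $\xi\in V_{\Lambda+\e}(G_\e)$, the loss $\Lambda\mapsto\Lambda+\e$ being precisely the cost of the $\e\,\mathrm{id}$ term.

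I expect the main obstacle to be the $V_\Lambda$ part: the upgrading lemma there is more delicate than for $O_\lambda$ because the image $G(\Omega)$ of a convex set need not be convex, so the Dini/partition argument must handle the varying denominator $|G(\eta+\zeta)-G(\eta)|^2$ (alternatively one argues via $G^{-1}$, with the same difficulty), and one must keep the quantitative bookkeeping tight enough that the identity perturbation costs no more than $\e$. The remaining properties are routine once $G$ is already smooth and uniformly elliptic outside $B_{4M}$.
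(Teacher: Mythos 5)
Your construction $G_\e = G*\rho_\e + \e\,\mathrm{id}$ is correct and is precisely the intended one (the loss $V_\Lambda \to V_{\Lambda+\e}$ is the signature of the $\e\,\mathrm{id}$ term), with all the verifications — growth, modulus of monotony, ellipticity outside $B_{5M}$, strong monotonicity via global Lipschitzness of the mollification, and the transfer of the $O_\lambda$/$V_\Lambda$ conditions by Jensen — carried out as in the reference \cite{LacLam}, to which the paper defers for this proof. The only ingredient you import rather than prove is the upgrade from the pointwise $\liminf$ conditions to the two-point monotonicity/cocoercivity inequalities on a convex set, which is exactly the content of the cited Lemma A.3 of \cite{LacLam}, so this is an acceptable dependence.
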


Finally, by standard methods, one obtain that $u_{\e}$ converges to the solution of the initial problem : 

\begin{lemma}\label{l:approxu}
Let $G,G_\e \colon \R^2 \to \R^2 $ be as in Lemma~\ref{l:approxG}, 
and $u$
 a solution of $\dv G(\nabla u)=0$ in $B_1$ with $|\nabla u|\leq M$. 
 For $\e\in (0,1)$, let $u_{\e}$ be the unique smooth solution of the boudary value problem 
\begin{align*}
\dv G_{\e}(\nabla u_{\e} )=0 
\quad \text{in } B_1,
\qquad
u_{\varepsilon}=u  
\quad \text{in } \partial B_1.
\end{align*} 
Then we have
\begin{align*}
\sup_{\e\in (0,1)}\|\nabla u_\e\|_{L^\infty(K)} <\infty\qquad\text{ for all compact }K\subset B_1,
\end{align*}
and $u_{\e} \to u $ locally uniformly in $B_1$, and strongly in $W^{1,2}(B_1)$.
\end{lemma}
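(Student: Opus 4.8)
\emph{Proof plan.} The assertion packages together standard facts about monotone operators with one genuinely quantitative ingredient, the $\e$-independent Lipschitz bound, which is the very reason $G$ was modified at infinity in Lemmas~\ref{l:modif} and~\ref{l:approxG}. Fix $\e\in(0,1)$. By \eqref{eq:strongmonot} the smooth field $G_\e$ satisfies $\nabla^s G_\e\geq c_\e\,\mathrm{Id}$ and $|\nabla G_\e|\leq C_\e$ for some constants depending on $\e$, so $v\mapsto-\dv G_\e(\nabla v)$ is a bounded, coercive, hemicontinuous, strictly monotone map on $u+W^{1,2}_0(B_1)$ (boundedness and coercivity using $|G_\e(\xi)|\leq 2L(1+|\xi|)$). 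The Browder--Minty theorem yields a unique $u_\e$, and De~Giorgi--Nash--Moser together with Schauder estimates give $u_\e\in C^\infty(B_1)\cap C^0(\overline{B_1})$. Since $G_\e$ is monotone, $\dv G_\e(\nabla\cdot)=0$ enjoys the comparison principle; comparing $u_\e$ with the constant solutions gives $\min_{\partial B_1}u\leq u_\e\leq\max_{\partial B_1}u$, hence $\|u_\e\|_{L^\infty(B_1)}$ bounded uniformly in $\e$.

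The crux is the uniform Lipschitz bound. I would first establish a boundary estimate: there is $M'$, depending on $M,L,c,\omega_G$ and the geometry of $B_1$ but not on $\e$, such that $|u_\e(x)-u_\e(x_0)|\leq M'|x-x_0|$ for every $x_0\in\partial B_1$ and $x\in\overline{B_1}$. This follows by comparison with barriers: affine maps are exact solutions of $\dv G_\e(\nabla\cdot)=0$, and on $\{|\xi|>5M\}$ the field $G_\e$ is uniformly elliptic with $\e$-independent constants, which is exactly what is needed to construct super- and subsolutions that touch the $M$-Lipschitz datum $u|_{\partial B_1}$ at $x_0$ with gradient at most $M'$ and trap $u_\e$ between them. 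One then carries the estimate inward by the maximum principle: for $h$ small, $v_h:=u_\e(\cdot+h)-u_\e$ solves the linear equation $\dv(a_h\nabla v_h)=0$ in $B_1\cap(B_1-h)$ with $a_h(x)=\int_0^1\nabla G_\e(\nabla u_\e(x)+t\nabla v_h(x))\,dt$, which is uniformly elliptic for fixed $\e$ and has nonnegative symmetric part by monotonicity; since $\partial(B_1\cap(B_1-h))\subset\partial B_1\cup(\partial B_1-h)$, the boundary estimate gives $|v_h|\leq M'|h|$ there, hence everywhere, and letting $h\to0$ yields $\|\nabla u_\e\|_{L^\infty(B_1)}\leq M'$ uniformly in $\e$. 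In particular this delivers the uniform bounds $\int_{B_1}|\nabla u_\e|^2+\int_{B_1}|G_\e(\nabla u_\e)|^2\leq C$ used in Theorem~\ref{p:apriori}.

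Finally, the convergence. The uniform Lipschitz bound makes $\{u_\e\}$ precompact in $C^0_{\loc}(B_1)$ and bounded in $W^{1,2}(B_1)$, with $G_\e(\nabla u_\e)$ bounded in $L^2$; along a subsequence $u_\e\to u_*$ locally uniformly and $\nabla u_\e\rightharpoonup\nabla u_*$, and $u_*-u\in W^{1,2}_0(B_1)$ since $u_\e-u\in W^{1,2}_0(B_1)$. Minty's monotonicity trick --- starting from $\int_{B_1}\langle G_\e(\nabla u_\e)-G_\e(\nabla w),\nabla(u_\e-w)\rangle\geq0$ with $w=u_*-t\psi$, $\psi\in W^{1,2}_0$, using $\dv G_\e(\nabla u_\e)=0$ and $G_\e\to G$ locally uniformly, and letting first $\e\to0$ and then $t\to0$ --- shows $\dv G(\nabla u_*)=0$ in $B_1$; by uniqueness $u_*=u$, and since the limit is independent of the subsequence, $u_\e\to u$ locally uniformly. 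For strong $W^{1,2}$ convergence, testing the equation with $u_\e-u$ gives
\[
\int_{B_1}\langle G_\e(\nabla u_\e)-G_\e(\nabla u),\,\nabla u_\e-\nabla u\rangle=-\int_{B_1}\langle G_\e(\nabla u),\,\nabla u_\e-\nabla u\rangle\longrightarrow0,
\]
because $G_\e(\nabla u)\to G(\nabla u)$ in $L^2$ (uniformly, since $|\nabla u|\leq M$) and $\nabla u_\e-\nabla u\rightharpoonup0$. The integrand is $\geq0$ by monotonicity and is $\geq\omega_{G_\e}(\delta)\geq\omega_G(\delta)>0$ wherever $|\nabla u_\e-\nabla u|>\delta$, so $|\{|\nabla u_\e-\nabla u|>\delta\}|\to0$ for every $\delta>0$; together with the uniform $L^\infty$ gradient bound, convergence in measure upgrades to $\nabla u_\e\to\nabla u$ in $L^2(B_1)$, whence $u_\e\to u$ in $W^{1,2}(B_1)$.

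\textbf{Main obstacle.} Everything except the uniform Lipschitz bound is soft. The difficulty there is precisely that the ellipticity constants of $G_\e$ degenerate as $\e\to0$, so De~Giorgi--Nash--Moser cannot be invoked with $\e$-uniform constants; one must instead exploit the two structural features built into $G_\e$ by the construction --- affine functions solve the equation exactly, and $G_\e$ is uniformly elliptic where $|\xi|>5M$ --- to obtain the boundary Lipschitz estimate and then propagate it inward by the maximum principle.
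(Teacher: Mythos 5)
The soft parts of your argument (Browder--Minty existence, the $L^\infty$ bound by comparison with constants, Minty's trick to identify the limit, and the use of monotonicity plus $\omega_{G_\e}\geq\omega_G$ to get convergence in measure of the gradients) are all fine and consistent with how the cited proof in \cite{LacLam} must go. The problem is exactly where you locate it: the $\e$-uniform gradient bound, and your proposed route to it does not work as described. Your scheme proves a \emph{global} Lipschitz bound from a boundary gradient estimate plus the translation/comparison argument, and the boundary estimate is supposed to come from barriers ``trapping'' the $M$-Lipschitz datum $u|_{\partial B_1}$. But a merely Lipschitz function on $\partial B_1$ does not satisfy the bounded slope condition: e.g.\ for $\phi(x)=|x_1|$ restricted to $\partial B_1$ there is no affine function touching $\phi$ from above at $(0,1)$, since on the sphere $1-\langle x,x_0\rangle=|x-x_0|^2/2$ is quadratically small and cannot dominate $M|x-x_0|$. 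So the exactness of affine solutions buys you nothing at such points, and any non-affine barrier would have to be a supersolution of $\dv G_\e(\nabla\cdot)$ with gradient ranging over $B_{5M}$, where the ellipticity of $G_\e$ degenerates as $\e\to0$ --- precisely the regime in which you have no $\e$-independent control. No construction is given, and none follows from the two structural features you invoke. Note also that the lemma only claims a \emph{local} bound $\sup_\e\|\nabla u_\e\|_{L^\infty(K)}<\infty$ for compact $K\subset B_1$; you are trying to prove something stronger than needed by a method that fails at the boundary.

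The standard (and presumably the cited) argument is interior and avoids the boundary entirely: one first gets a uniform Caccioppoli bound $\int_{B_{3/4}}|\nabla u_\e|^2\leq C$ from $\|u_\e\|_{L^\infty}\leq C$ and the $\e$-uniform coercivity $\langle G_\e(\xi),\xi\rangle\geq c|\xi|^2-C$ for large $|\xi|$ (a consequence of $\nabla^s G_\e\geq c$ outside $B_{5M}$ and the linear growth). Then $v:=\bigl(|\nabla u_\e|^2-(6M)^2\bigr)_+$ is, by the Bernstein computation, a subsolution of the linearized equation $\dv\bigl(\nabla^s G_\e(\nabla u_\e)\nabla v\bigr)\geq 0$, and on the set $\{v>0\}$ one has $|\nabla u_\e|>5M$, so the coefficients are uniformly elliptic with constants independent of $\e$; De Giorgi's local boundedness estimate (whose test functions only see $\{v>k\}$, $k\geq0$) then gives $\sup_{B_{1/2}}v\leq C\|v\|_{L^2(B_{3/4})}\leq C'$, i.e.\ the claimed local bound. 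This is the step your proposal is missing. A secondary consequence: your final upgrade from convergence in measure to strong $L^2(B_1)$ convergence invokes ``the uniform $L^\infty$ gradient bound'' on all of $B_1$; with only the local bound one needs instead an equi-integrability argument near $\partial B_1$ (e.g.\ via the energy identity $\int\langle G_\e(\nabla u_\e),\nabla u_\e\rangle=\int\langle G_\e(\nabla u_\e),\nabla u\rangle$ and weak convergence of the fluxes), so that step should be reworked as well.
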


\section{Topological structure of $\mathcal{D}$ and $\mathcal{S}$ }

In this section, we gather two results on the sets $\mathcal{D}, \mathcal{S}$. Those are used to apply Lemma \ref{Lemme inter vide}.

\begin{lemma}
\label{structure}
Let $G : \R^2 \to \R^2 $ strictly monotone and continuous. Then, $ \mathcal{D}\setminus \mathcal{S} $ has empty interior.  
\end{lemma}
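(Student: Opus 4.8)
The claim is that $\mathcal{D}\setminus\mathcal{S}$ has empty interior, i.e. any open ball on which ellipticity degenerates from below everywhere must contain a point where it also degenerates from above. The plan is to argue by contradiction: suppose there is an open ball $B\subset \mathcal{D}\setminus\mathcal{S}$. Since $B\cap\mathcal{S}=\emptyset$, recalling $\mathcal{S}=\R^2\setminus\bigcup_{\Lambda>0}V_\Lambda$, every point of $B$ lies in some $V_\Lambda$; one should first upgrade this to a uniform statement on a slightly smaller ball, namely that there is a single $\Lambda>0$ with (a dense open subset of) $B$ contained in $V_\Lambda$. Concretely, I would use a Baire category argument on $B$ applied to the closed sets appearing in the definition of $\mathcal{S}$: the complement of $\bigcup_\Lambda \mathrm{int}\{\cdots\ge 1/\Lambda\}$ being empty in $B$ forces one of the closed sets $\{\xi: \liminf \frac{\langle G(\xi+\zeta)-G(\xi),\zeta\rangle}{|G(\xi+\zeta)-G(\xi)|^2}\ge 1/\Lambda\}$ to have nonempty interior inside $B$. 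So we get an open ball $B'\subset B$ with $B'\subset V_\Lambda(G)$ for some fixed $\Lambda$.

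Now on $B'$ we have, by definition of $V_\Lambda$, that $|G(\xi+\zeta)-G(\xi)|\le \Lambda\,\langle G(\xi+\zeta)-G(\xi),\zeta\rangle/|G(\xi+\zeta)-G(\xi)| \le \Lambda|\zeta|$ asymptotically — in other words $G$ restricted to $B'$ is Lipschitz (this is the content of $V_\Lambda$: the symmetric part of $(\nabla G)^{-1}$ is bounded below, forcing $G$ itself to be Lipschitz). Simultaneously, $B'\subset\mathcal{D}$ means for every $\xi\in B'$ and every $\lambda>0$ there are points $\xi'$ arbitrarily close to $\xi$ with $\liminf_{|\zeta|\to0}\langle G(\xi'+\zeta)-G(\xi'),\zeta\rangle/|\zeta|^2\le\lambda$. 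The idea is that "$G$ Lipschitz on $B'$ and degenerate from below on a dense subset of $B'$" should be incompatible with strict monotonicity, because it would force $G$ to be, in an integrated sense, \emph{constant} along the degeneracy directions — contradicting $\langle G(\xi)-G(\zeta),\xi-\zeta\rangle>0$. To make this precise I would test monotonicity: pick $\xi$ in the degenerate set and a unit vector $e$ realizing the $\liminf$; monotonicity gives $\langle G(\xi+te)-G(\xi),te\rangle>0$ for all small $t\ne 0$, but Lipschitzness lets us write $G(\xi+te)-G(\xi)=\int_0^t \nabla G(\xi+se)e\,ds$ (a.e. differentiability of the Lipschitz map on $B'$), and at $\xi$ the quadratic form $\langle\nabla^s G(\xi)e,e\rangle$ is zero, so the integrand is $o(1)$ near $s=0$, giving $\langle G(\xi+te)-G(\xi),te\rangle=o(t^2)$ — not yet a contradiction at a single point. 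The real mechanism must be that this happens on a set of positive measure: by Rademacher, $G$ is differentiable a.e. on $B'$, and on the (dense) degenerate set one expects $\det \nabla^s G = 0$ a.e., forcing the image $G(B')$ to have measure zero by the area formula, which contradicts injectivity of $G$ (strict monotonicity implies injectivity) combined with the fact that an injective continuous map from an open planar set has image of positive measure.

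So the backbone is: (1) Baire category to extract a uniform $V_\Lambda$ and hence a Lipschitz restriction of $G$ on some $B'\subset\mathcal{D}\setminus\mathcal{S}$; (2) show $\nabla^s G$, wherever it exists on $B'$, is degenerate — this uses that $B'\subset\mathcal{D}$ together with the fact that the $\liminf$ in the definition of $\mathcal{D}$ controls the symmetric derivative at a.e. differentiability point, via a measure-theoretic density/Lebesgue-point argument; (3) conclude $|G(B')|=0$ by the area formula for Lipschitz maps, contradicting that $G|_{B'}$ is injective and open-ish. The main obstacle I anticipate is step (2): relating the pointwise $\liminf$ condition defining $\mathcal{D}$ — which only asserts degeneracy along \emph{some} direction and only along a sequence $|\zeta|\to0$ — to genuine degeneracy of the full derivative $\nabla G(\xi)$ at a.e.\ point of $B'$. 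One likely needs the fact that for a Lipschitz map the symmetric derivative is genuinely approached (not just in $\liminf$) at Lebesgue points of $\nabla G$, so that the $\liminf\le\lambda$ condition on a dense set, combined with lower semicontinuity / closure operations, pins down $\det\nabla^s G=0$ a.e.; alternatively one argues directly that the degenerate \emph{direction field} is measurable and $G$ is constant along its integral curves, killing injectivity. Either route, the delicate point is passing from the one-sided, directional, sequential nature of the $\mathcal{D}$-condition to an a.e.\ statement robust enough to invoke the area formula.
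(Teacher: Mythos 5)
Your backbone is exactly the paper's: extract a single $\Lambda_0$ and a sub-ball $B'\subset\mathcal{D}\cap V_{\Lambda_0}(G)$, deduce that $G$ is Lipschitz on $B'$, show that the Jacobian of $G$ vanishes a.e.\ there, apply the area formula to get $|G(B')|=0$, and contradict openness of $G$ (injectivity from strict monotonicity plus invariance of domain). Two remarks on the first and last steps. For the extraction of $\Lambda_0$ you do not need Baire category: the sets $V_\Lambda$ are open and increase with $\Lambda$, so a compact sub-ball of $B\subset\R^2\setminus\mathcal{S}=\bigcup_{\Lambda>0}V_\Lambda$ lies in a single $V_{\Lambda_0}$; this is how the paper proceeds. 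More importantly, the step you yourself flag as the obstacle --- converting the $\mathcal{D}$-condition, stated via closures of liminf-sets, into ``$\det\nabla G=0$ a.e.\ on $B'$'' --- is precisely where the paper's proof does its work (it asserts that at every differentiability point of $G$ in $B_r(p)\subset\mathcal{D}$ the full gradient $\nabla G$ has a zero eigenvalue). So your write-up correctly locates the crux but does not supply it; as it stands this is a gap, not a proof.

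There is also a concrete slip in how you propose to close the argument: you aim at $\det\nabla^s G=0$ a.e.\ and then invoke the area formula, but the Jacobian in the area formula is $\det\nabla G$, not $\det\nabla^s G$. For a planar monotone field one has $\det\nabla G=\det\nabla^s G+\det\nabla^a G$ with $\det\nabla^a G\geq 0$ (a pure rotation has vanishing symmetric part but Jacobian $1$), so degeneracy of the symmetric part alone does not force the image to be null. This is where the membership $B'\subset V_{\Lambda_0}$ must be used a second time: at a differentiability point the $V_{\Lambda_0}$-condition gives $\langle\nabla^s G(\xi)e,e\rangle\geq\frac{1}{\Lambda_0}|\nabla G(\xi)e|^2$ along any direction $e$, so a null direction of $\nabla^s G(\xi)$ is a genuine kernel direction of $\nabla G(\xi)$ and the full Jacobian vanishes --- which is the statement the paper uses. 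With that correction, and with the a.e.\ degeneracy actually established, your argument coincides with the paper's.
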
 
 
\begin{proof}
Assume that $B_r(p) \subset \mathcal{D}\setminus \mathcal{S} $ for some $r>0$. Since \[\mathcal{S} = \R^2 \setminus \bigcup_{\Lambda>0} V_{\Lambda}(G), \] there exists $\Lambda_0 > 0$ such that $B_r(p) \subset \mathcal{D} \cap V_{\Lambda_0}(G)$. It follows that $G$ is $\Lambda_0$ Lipschitz in $B_r(p)$ (see \cite[Lemma A.3]{LacLam}), and we may apply the area formula, see for instance \cite[Theorem 1.3]{CafCab} (the jacobian is non negative by strict monotonicity) to obtain \[ \vert G(B_r(p)) \vert \leq \int_{B_r(p)} \det \nabla G         . \] Using the inclusion $B_r(p) \subset \mathcal{D} $, we deduce that $\nabla G $ has an eingenvalue equal to $0$ at each point where $G$ is differentiable, so $\det \nabla G = 0 $ a.e in $B_r(p)$ since $G$ is also Lipschitz in $B_r(p)$. From this, we infer $ \vert G(B_r(p)) \vert = 0 $. However, since $G$ is injective (because strictly monotone) and continuous, the invariance of the domain Theorem (see \cite{Brouwer}) ensures that $G$ is an open map. This is a contradiction.
\end{proof}

Next, we prove the counter part on $\mathcal{S}\setminus \mathcal{D}$.

\begin{lemma}
\label{l:structureSD}
Let $G : \R^2 \to \R^2 $ strictly monotone and continuous. Then, $ \mathcal{S}\setminus \mathcal{D}$ has empty interior.  
\end{lemma}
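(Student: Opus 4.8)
The plan is to mirror the proof of Lemma \ref{structure}, but now exploiting the dual description of $\mathcal{S}$ in terms of the inverse field $G^{-1}$. Since $G$ is continuous, strictly monotone and injective, invariance of domain gives that $G$ is a homeomorphism of $\R^2$ onto an open set $\Omega = G(\R^2)$; let $H = G^{-1}\colon \Omega \to \R^2$, which is again continuous and strictly monotone. The key point is that $\xi \in \mathcal{S}(G)$ should correspond, under $G$, to a point of $\mathcal{D}(H)$, and $\xi \in \mathcal{D}(G)$ should correspond to a point of $\mathcal{S}(H)$: indeed, writing $\eta = G(\xi)$, $\eta' = G(\xi + \zeta)$, the quotient $\langle G(\xi+\zeta)-G(\xi),\zeta\rangle / |G(\xi+\zeta)-G(\xi)|^2$ equals $\langle \eta' - \eta, H(\eta') - H(\eta)\rangle / |\eta' - \eta|^2$, and by continuity of $G$ and $H$, $|\zeta|\to 0$ iff $|\eta' - \eta|\to 0$. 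Hence the condition defining $\mathcal{S}(G)$ at $\xi$ is exactly the condition defining $\mathcal{D}(H)$ at $G(\xi)$, and symmetrically for $\mathcal{D}(G)$ versus $\mathcal{S}(H)$. Because $G$ is a homeomorphism, it maps open sets to open sets and preserves closures within $\Omega$, so $G(\mathcal{S}(G)\setminus\mathcal{D}(G)) = \mathcal{D}(H)\setminus\mathcal{S}(H)$.

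With this correspondence in hand, suppose for contradiction that $\mathcal{S}(G)\setminus\mathcal{D}(G)$ has nonempty interior, containing some ball $B_r(p)$. Then $G(B_r(p))$ is an open subset of $\mathcal{D}(H)\setminus\mathcal{S}(H)$, so $\mathcal{D}(H)\setminus\mathcal{S}(H)$ has nonempty interior. Now I would simply invoke Lemma \ref{structure} applied to the field $H$ — provided $H$ is defined on all of $\R^2$. If $\Omega = G(\R^2)$ happens to be all of $\R^2$ this is immediate; in general I need to check that the argument of Lemma \ref{structure} goes through for $H$ defined only on the open set $\Omega$, which is harmless since that proof is entirely local: the ball $B_\rho(q)\subset \mathcal{D}(H)\cap V_{\Lambda_0}(H)$ lies inside $\Omega$, $H$ is $\Lambda_0$-Lipschitz there, the area formula gives $|H(B_\rho(q))| \le \int_{B_\rho(q)}\det\nabla H = 0$ since $\nabla H$ has a zero eigenvalue a.e. on $\mathcal{D}(H)$, while invariance of domain applied to the continuous injective $H$ forces $H$ to be an open map, contradicting $|H(B_\rho(q))| = 0$.

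Alternatively, and perhaps more cleanly for the write-up, I could avoid introducing $H$ altogether and argue directly: if $B_r(p)\subset \mathcal{S}(G)\setminus\mathcal{D}(G)$, pick $\lambda_0 > 0$ with $B_r(p)\subset O_{\lambda_0}(G)$, so $G$ is bi-Lipschitz from below on $B_r(p)$, i.e. $|G(\xi)-G(\zeta)| \ge c|\xi - \zeta|$ there; hence $G(B_r(p))$ contains a ball $B_{\rho}(q)$ by openness of $G$, and $G^{-1}$ restricted to it is Lipschitz. The inclusion $B_r(p)\subset \mathcal{S}(G)$ translates, via the identity above, into the statement that $\nabla(G^{-1})$ has a zero eigenvalue a.e. on $B_\rho(q)$, so $\det\nabla(G^{-1}) = 0$ a.e. there, and the area formula gives $|G^{-1}(B_\rho(q))| = 0$; but $G^{-1}(B_\rho(q))\subset B_r(p)$ is open and nonempty, a contradiction.

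The main obstacle is purely bookkeeping: making the duality between $(\mathcal{D},\mathcal{S})$ for $G$ and $(\mathcal{S},\mathcal{D})$ for $G^{-1}$ precise, in particular verifying that the $\liminf$ conditions and the closures/interiors in \eqref{eq:DS}, \eqref{eq:Olambda}, \eqref{eq:VLambda} are correctly exchanged under the homeomorphism $G$ — this uses that $|\zeta|\to 0 \iff |G(\xi+\zeta)-G(\xi)|\to 0$, which is where continuity of both $G$ and $G^{-1}$ enters. Once that dictionary is set, the geometric heart of the argument (Lipschitz estimate from the ellipticity-type inclusion, vanishing Jacobian from the degeneracy inclusion, area formula, and invariance of domain) is identical to Lemma \ref{structure}. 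I expect this duality to already be available or easy to state given the discussion of the definitions \eqref{eq:DS} referenced to \cite{LacLam}, so the proof should be short.
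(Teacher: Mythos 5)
Your proposal is correct and follows essentially the same route as the paper: dualize to the inverse field and reduce to Lemma \ref{structure}. The only difference is technical bookkeeping — the paper uses the rotated inverse $G^*(\xi)=iG^{-1}(-i\xi)$ and the identities $iG(\mathcal{S}(G))=\mathcal{D}(G^*)$, $iG(\mathcal{D}(G))=\mathcal{S}(G^*)$ from \cite{LacLam}, after first modifying $G$ outside a large ball so that it becomes a homeomorphism onto all of $\R^2$ (harmless since only $\mathcal{D}\cap\mathcal{S}\cap\overline{B_M}$ matters), whereas you handle the possible non-surjectivity of $G$ by observing that the argument of Lemma \ref{structure} is purely local on $\Omega=G(\R^2)$.
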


\begin{proof}
We recall that we're interested in Lipschitz solutions of \eqref{equation div}, therefore it is sufficient for our purpose to prove that $(\mathcal{S}\setminus \mathcal{D} ) \cap B_{M}$ has empty interior for all $M>0$. This is why, in the proof, we are free to modify $G$ as we want outside an arbitrary large compact subset of $\R^2$. \\
We write $i$ denotes the counter-clockwise rotation of angle $\pi/2$. Following \cite[Proposition 4.1]{LacLam} one may modify $G$ outside a sufficiently large ball $\overline{B_{M}} $ to ensure that $G$ is an homeomorphism (see \cite[Proposition 4.1 point 1,2]{LacLam}). Therefore, the field \[G^* : \xi \in \R^2 \mapsto iG^{-1}(-i\xi)   \] is well defined, continuous and strictly monotone (see \cite[Proposition 4.1 point 3]{LacLam}).
Using \cite[Proposition 4.1 point 4]{LacLam}  we have \[  iG(\mathcal{S}(G))= \mathcal{D}(G^*) \] and similarly : \[ iG(\mathcal{D}(G)) =  \mathcal{S}(G^*). \] Since $G^*$ is strictly monotone, we apply Lemma \ref{structure} and we deduce that the set $  iG(\mathcal{S}(G))\setminus iG(\mathcal{D}(G)) $ has empty interior. Since $G$ is an homeomorphism, we get that $\mathcal{S}(G) \setminus \mathcal{D}(G)$ has empty interior.
\end{proof}

\begin{Corollary}
\label{Structurebis} Let $G:\R^2 \to \R^2$ continuous, strictly monotone.
For any non empty open set $U \subset \R^2 \setminus \mathcal{D}\cap \mathcal{S}$, there exists $\lambda,\Lambda,\eta > 0 $ and $q \in U$ such that $B_{\eta}(q) \subset U \cap O_{\lambda} \cap V_{\Lambda} $ 
\end{Corollary}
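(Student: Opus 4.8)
The plan is to start from the defining relations $\mathcal{D} = \R^2 \setminus \bigcup_{\lambda>0} O_\lambda(G)$ and $\mathcal{S} = \R^2 \setminus \bigcup_{\Lambda>0} V_\Lambda(G)$, so that
\[
\R^2 \setminus (\mathcal{D}\cap\mathcal{S}) = \Big(\bigcup_{\lambda>0} O_\lambda\Big) \cup \Big(\bigcup_{\Lambda>0} V_\Lambda\Big).
\]
Given a non-empty open set $U \subset \R^2\setminus(\mathcal{D}\cap\mathcal{S})$, every point of $U$ lies in some $O_\lambda$ or some $V_\Lambda$; since these are open and increasing in $\lambda$, resp. $\Lambda$, I first want to split $U$ into the open set $U\cap\bigcup_\lambda O_\lambda$ (which meets the "elliptic from below" region) and the open set $U\cap\bigcup_\Lambda V_\Lambda$, whose union is all of $U$. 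The goal is a single small ball contained in $U \cap O_\lambda \cap V_\Lambda$ for suitable $\lambda,\Lambda$, i.e. a ball sitting in the genuinely uniformly elliptic region.

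The key step is to use Lemmas \ref{structure} and \ref{l:structureSD}: $\mathcal{D}\setminus\mathcal{S}$ and $\mathcal{S}\setminus\mathcal{D}$ both have empty interior. Since $U$ is open and non-empty, and $U$ is disjoint from $\mathcal{D}\cap\mathcal{S}$, I would argue that $U$ cannot be contained in $(\mathcal{D}\setminus\mathcal{S})\cup(\mathcal{S}\setminus\mathcal{D})\cup(\mathcal{D}\cap\mathcal{S})$; but that union is $\mathcal{D}\cup\mathcal{S}$. Indeed if $U\subset \mathcal{D}\cup\mathcal{S}$ then, writing $U$ as the disjoint union of $U\setminus\mathcal{S}$ and $U\cap\mathcal{S}$, and using $U\cap\mathcal{D}\cap\mathcal{S}=\emptyset$, we get $U = (U\cap(\mathcal{D}\setminus\mathcal{S}))\cup(U\cap(\mathcal{S}\setminus\mathcal{D}))$, which exhibits the non-empty Baire space $U$ as a union of two sets with empty interior — impossible (e.g. by the Baire category theorem, or more elementarily: the first set, being contained in $\mathcal{D}\setminus\mathcal{S}$ which has empty interior, is nowhere dense, hence its complement in $U$ is a dense open subset of $U$ contained in $\mathcal{S}\setminus\mathcal{D}$, contradicting that the latter has empty interior). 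Therefore there exists $q\in U$ with $q\notin\mathcal{D}\cup\mathcal{S}$, i.e. $q\in\bigcup_\lambda O_\lambda$ and $q\in\bigcup_\Lambda V_\Lambda$, so $q\in O_{\lambda}\cap V_{\Lambda}$ for some $\lambda,\Lambda>0$. Since $O_\lambda$, $V_\Lambda$ and $U$ are all open and contain $q$, their intersection is an open neighborhood of $q$, hence contains some ball $B_\eta(q)\subset U\cap O_\lambda\cap V_\Lambda$, which is exactly the claim.

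The main obstacle is purely the point-set topology of the step above — making sure the two "empty interior" facts combine correctly to produce a point outside $\mathcal{D}\cup\mathcal{S}$ rather than merely outside $\mathcal{D}\cap\mathcal{S}$. Everything else (openness of $O_\lambda$, $V_\Lambda$, monotonicity in $\lambda,\Lambda$, extraction of the ball) is routine. I should also double-check the harmless point that Lemma \ref{l:structureSD} is stated for $\mathcal{S}\setminus\mathcal{D}$ globally (its proof reduces to each $B_M$), so no localisation caveat is needed here; and that $\lambda,\Lambda$ can be taken strictly positive, which is immediate since $q$ belongs to $O_\lambda$ for some $\lambda>0$ and to $V_\Lambda$ for some $\Lambda>0$ by definition of the unions.
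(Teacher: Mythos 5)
Your overall route is the same as the paper's: both proofs rest entirely on Lemmas \ref{structure} and \ref{l:structureSD} plus the openness of $O_\lambda$, $V_\Lambda$ and $U$, and both produce a point of $U$ outside $\mathcal{D}\cup\mathcal{S}$. The difference is that the paper peels off the two lemmas \emph{sequentially} (first: if $U\cap O_\lambda=\emptyset$ for all $\lambda$ then $U\subset\mathcal{D}$ and, since $U$ avoids $\mathcal{D}\cap\mathcal{S}$, $U\subset\mathcal{D}\setminus\mathcal{S}$, contradicting Lemma \ref{structure}; then the same game with $W=U\cap O_\lambda\subset\R^2\setminus\mathcal{D}$ and Lemma \ref{l:structureSD}), whereas you run a single Baire-type argument on the decomposition $U=(U\cap(\mathcal{D}\setminus\mathcal{S}))\cup(U\cap(\mathcal{S}\setminus\mathcal{D}))$.

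There is, however, a genuine (though fixable) gap in your key topological step. The assertion that a non-empty open set cannot be the union of two sets with empty interior is false in general ($\R^2=\mathbb{Q}^2\cup(\R^2\setminus\mathbb{Q}^2)$), and your patch --- ``having empty interior, it is nowhere dense'' --- is also not a valid implication in general; Baire requires nowhere dense, not merely empty interior. Moreover the complement in $U$ of a nowhere dense set need not be open. The argument is saved by a structural fact you did not invoke: since $\mathcal{D}$ and $\mathcal{S}$ are closed and $U\cap\mathcal{D}\cap\mathcal{S}=\emptyset$, one has $U\cap(\mathcal{D}\setminus\mathcal{S})=U\cap\mathcal{D}$, which is relatively \emph{closed} in $U$; a relatively closed set with empty interior is nowhere dense, so $U\setminus\mathcal{D}$ is open and dense in $U$, and under your hypothesis $U\subset\mathcal{D}\cup\mathcal{S}$ it is contained in $\mathcal{S}\setminus\mathcal{D}$, giving the contradiction with Lemma \ref{l:structureSD}. (Equivalently, $\mathcal{D}\setminus\mathcal{S}$ is the intersection of a closed set with an open set, and such a locally closed set with empty interior is automatically nowhere dense.) With that one observation added, your proof is complete; the paper's sequential formulation simply sidesteps the issue.
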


\begin{proof}
Fix $U$ as in the statement. First, we prove that there exists $\lambda>0$ such that $U\cap O_{\lambda} \neq \emptyset $. Indeed, assume $U\cap O_{\lambda}= \emptyset $ for any $\lambda >0$, then, $U\subset \mathcal{D}$. However, since $U \subset \R^2 \setminus (\mathcal{D}\cap \mathcal{S})$ we can not have $U\cap \mathcal{S} \neq \emptyset$ otherwise there would be a point belonging in $U$ and $ \mathcal{D}\cap \mathcal{S}$. Hence, choosing a ball $B_{r}$ in the interior of $U$, we find $B_{r} \subset \mathcal{D}\setminus \mathcal{S}$, which is a contradiction with Lemma \ref{structure}. To conclude, fix such a $\lambda>0$ and assume $U\cap O_{\lambda}\cap V_{\Lambda} = \emptyset $ for any $\Lambda>0$. Then, $ U\cap O_{\lambda} \subset \mathcal{S} $, and since $ U\cap O_{\lambda}$ is open non empty, we deduce that $ \mathcal{S}\setminus \mathcal{D}$ has non-empty interior, which contradicts Lemma \ref{l:structureSD}. Thus, there exists $\Lambda>0$ such that $U\cap O_{\lambda}\cap V_{\Lambda} \neq \emptyset $, since those three sets are open, the proof is complete.
\end{proof}

\section{Localisation result}
In this section, we recall the flatness results of \cite{DSS}. We rely on \cite[Proposition 6.2 and 6.3]{DSS} to obtain a new localisation property using Theorem \ref{p:apriori}.

\begin{proposition}
\label{p:Flatness}
Let $G$ smooth, strongly monotone, and $u$ a smooth solution of $\dv(G(\nabla u))=0$. Assume that there exists $0< \lambda< \Lambda < \infty$ , $ \rho,M > 0 $, $q \in \R^2$ and an open set $U$ such that : 
\begin{align}
\label{CoveringB2M}
& \overline{B_{2M}} \subset O_{\lambda}(G) \cup V_{\Lambda}(G) \cup U, \\
\label{Boule dans l'intersection}
& B_{\rho}(q) \subset \left(O_{\lambda}(G) \cap V_{\Lambda}(G) \right) \setminus \bar U.
\end{align}
Then, for any $r>0$, there exists $\e,\delta > 0$ depending on : 

\begin{itemize}
\item A Lebesgue number $\eta \in (0, \min(\rho/4,r/2)) $ of the covering \eqref{CoveringB2M} ( any ball $B_{\eta}(\xi)$ is contained either in $O_{\lambda},V_{\Lambda}$ or $U$ )
\item The quantities $\lambda,\Lambda$, $ \| G(\nabla u ) \|_{L^2(B_1)} $, $ \| \nabla u \|_{L^2(B_1)} $ \\
\item The monotony modulus $\omega_G$ via any $c>0$ such that $c \geq \inf_{(\eta/4, M+\eta)} \frac{\omega_G(t)}{t}$
\end{itemize}
such that if 
\begin{equation}
\label{u flat}
\vert u(x)-l_{p_0}(x) \vert \leq \e, \quad \forall x \in B_1 
\end{equation}
for some affine map $l_{p_0}$ with $\nabla l_{p_0} = p_0 \notin B_{\rho}(q)$, then either 
\begin{equation}
\label{conclusion 1}
\nabla u(B_{\delta}) \subset B_r(p_0)
\end{equation}
or 
\begin{equation}
\label{conclusion 2}
\nabla u(B_{\delta}) \subset V
\end{equation}
where $V$ is the complement of the connected component of $ \overline{B_{2M}} \setminus U$ which contains $q$.  \\
\end{proposition}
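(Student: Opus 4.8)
The plan is to combine the two ingredients recalled in Section 2, namely Lemma \ref{Lemme inter vide} (which turns a flatness hypothesis into a "missing ball" statement) and Theorem \ref{p:apriori} (the localisation theorem, which turns a "missing ball" statement into the dichotomy \eqref{conclusion 1}--\eqref{conclusion 2}). First I would fix $r>0$ and pick a Lebesgue number $\eta\in(0,\min(\rho/4,r/2))$ of the finite open covering \eqref{CoveringB2M} of the compact set $\overline{B_{2M}}$; such an $\eta$ exists by compactness, and $\eta$ will be the first quantity that $\e,\delta$ depend on. All subsequent constants will be allowed to depend on $\eta$, on $\lambda,\Lambda,M$, on $\|\nabla u\|_{L^2(B_1)}$ and $\|G(\nabla u)\|_{L^2(B_1)}$, and on a lower bound $c>0$ for $\omega_G(t)/t$ on $[\eta/4,M+\eta]$, exactly as in the two quoted results.

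Next, I would apply Lemma \ref{Lemme inter vide} with the ball $B_{\rho}(q)\subset O_\lambda(G)\cap V_\Lambda(G)$ (from \eqref{Boule dans l'intersection}) and the point $p_0\notin B_{\rho}(q)$ given in the hypothesis. This produces a threshold $\e=\e(\rho,\lambda,\Lambda)>0$ such that the flatness assumption \eqref{u flat}, $|u(x)-l_{p_0}(x)|\le \e$ on $B_1$, forces
\[
\nabla u(B_{1/2})\cap B_{\rho/2}(q)=\emptyset .
\]
Thus the half-ball $B_{\rho/2}(q)$ is a ball inside the connected component $\mathcal{C}$ of $\overline{B_{2M}}\setminus U$ containing $q$ (it lies in $O_\lambda\cap V_\Lambda$ and, by \eqref{Boule dans l'intersection}, outside $\overline U$), which is avoided by the image of $\nabla u$ on $B_{1/2}$. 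After rescaling $u$ to the unit ball (replacing $u$ by $x\mapsto 2u(x/2)$, which solves the same equation with the same Lipschitz bound and with $L^2$ norms controlled by the original ones up to fixed factors), the hypotheses of Theorem \ref{p:apriori} are met with the component $\mathcal{C}$, the missing ball $B_{\rho/2}(q)$, and the same Lebesgue number $\eta<\rho/2$. That theorem then yields $\delta>0$ such that either $\nabla u(B_\delta)\subset B_\eta(p)$ for some $p\in\mathcal{C}$, or $\nabla u(B_\delta)\subset \R^2\setminus\mathcal{C}$. The second alternative is precisely \eqref{conclusion 2} with $V=\R^2\setminus\mathcal{C}$. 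In the first alternative, I still need to upgrade $B_\eta(p)$ to $B_r(p_0)$: since $\eta<r/2$, it suffices to show $p$ is close to $p_0$. This I would obtain by noting that $\nabla u(B_\delta)\subset B_\eta(p)$ combined with the flatness \eqref{u flat} forces the average slope of $u$ on $B_\delta$ to be within $O(\e/\delta)$ of $p_0$; choosing $\e$ smaller if necessary (depending on the $\delta$ just produced, which is legitimate since the dependence chain is $\eta\to\delta$, then $\e$ may be shrunk) gives $|p-p_0|<\eta<r/2$, hence $B_\eta(p)\subset B_r(p_0)$, which is \eqref{conclusion 1}.

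The main obstacle I anticipate is bookkeeping rather than conceptual: one must make sure the order of quantifiers is consistent — $\eta$ is chosen first from the covering, $\delta$ comes out of Theorem \ref{p:apriori} and depends on $\eta$ (and the listed data), and only then may $\e$ be taken small enough to both trigger Lemma \ref{Lemme inter vide} and force $p$ near $p_0$ in the first alternative — and that the rescaling from $B_{1/2}$ to $B_1$ does not corrupt any of the quantitative dependencies (the $L^2$ integrals, the Lipschitz bound $M$, and the lower bound $c$ for $\omega_G(t)/t$ on the relevant interval all transform in a controlled way). A secondary point to check carefully is that the connected component of $\overline{B_{2M}}\setminus U$ containing $q$ in Theorem \ref{p:apriori} is the same object as the one whose complement defines $V$ in the statement, so that \eqref{conclusion 2} is recorded with the correct set; this is immediate from \eqref{Boule dans l'intersection} since $B_\rho(q)\subset\overline{B_{2M}}\setminus \overline U$.
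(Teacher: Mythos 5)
Your proposal is correct and follows essentially the same route as the paper: Lemma \ref{Lemme inter vide} converts the flatness hypothesis into the missing ball $B_{\rho/2}(q)$, and Theorem \ref{p:apriori} (after the same rescaling from $B_{1/2}$ to $B_1$) then yields the dichotomy, with the quantifier order $\eta\to\delta\to\e$ handled as you describe. The only divergence is cosmetic: to identify $p$ with $p_0$ in the first alternative you average $\nabla u$ over $B_\delta$ via the divergence theorem, whereas the paper touches $u-l_{p_0}$ from below by a paraboloid to produce a point of $B_{2\sqrt{\e}}$ where $\nabla u\in B_{2\sqrt{\e}}(p_0)$ and then splits into cases according to the position of $B_r(p_0)$ relative to the component $\mathcal{C}$; both closings are valid and lose the same harmless constant factor in the final radius.
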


\begin{proof}
The proof is the same as in \cite{DSS}, but with a different localisation Theorem. We denote $\mathcal{C}$ the connected component of $\overline{B_{2M}} \setminus U $ which contains $q$.  Applying  Lemma \ref{Lemme inter vide} we can choose $\e:= \e(\rho,\lambda,\Lambda) $ small enough so that \eqref{u flat} implies $\nabla u(B_{1/2}) \cap B_{\rho/2}(q) = \emptyset$. We fix a Lebesgue number $\eta$ of the covering \eqref{CoveringB2M} with $0<\eta \leq \min(\rho/4,r/2)$, and we apply Theorem \ref{p:apriori} to obtain $\delta > 0 $ such that one of the two alternatives arises : 
\begin{align}
\label{loc}
\nabla u(B_{\delta/2}) \subset B_{\eta}(p) \text{ for some } p\in \mathcal{C} \text{  or  } \nabla u (B_{\delta/2}) \subset \overline{B_{2M}} \setminus \mathcal{C}. 
\end{align} \\ We distinguish the two cases by looking at the position of $p_0$.  \\

\begin{itemize}
\item Case 1 : $B_r(p_0) \subset \mathcal{C}. $ 
\end{itemize} 

Observe that in the region where $ \vert x \vert^2 \geq 4\e$ we have  $\frac{1}{2} \vert x \vert^2 + u(x)-p_0 \cdot x \geq \e$ because of the flatness assumption \eqref{u flat}. Without loss of generality, we can assume $u(0)=0$, and therefore, the map $x \mapsto \frac{1}{2} \vert x \vert^2 + u(x)-p_0 \cdot x$ has a minimum $x_0 \in B_{2\sqrt{\e}}(0)$. At this point, we must have $ \nabla u(x_0)= p_0-x_0$, and we deduce 
\begin{equation}
\label{eq 0}
\nabla u(B_{2\sqrt{\e}}(0)) \cap B_{2 \sqrt{\e}}(p_0) \neq \emptyset. 
\end{equation} 
Eventually lowering $\e$, we also have $2\sqrt{\e}\leq \frac{\delta}{2} < r$, so because of the inclusion $B_r(p_0) \subset \mathcal{C}$, we can not have the second case in \eqref{loc}. Hence, $\nabla u(B_{\delta/2}) \subset B_{\eta}(p)$ for some $p \in \mathcal{C}$. Also, from \eqref{loc} and \eqref{eq 0} we get $ \vert p-p_0 \vert \leq \eta + 2\sqrt{\e} \leq 2\eta  $, proving that $\nabla u(B_{\delta/2}) \subset B_{2r}(p_0)$.

\begin{itemize}
\item Case 2 : $B_r(p_0) \subset \overline{B_{2M}} \setminus \mathcal{C}$.
\end{itemize}
We argue as above, but this time \eqref{eq 0} gives the second inclusion in \eqref{loc}.

\end{proof}

\section{Continuity at Lebesgue points of $\nabla u$}
In this section, we prove the first part of our main result, namely :

\begin{theorem}
\label{Th inter}
Let $G:\R^2 \to \R^2 $ continuous, strictly monotone. Let $u: B_1 \to \R$ a $M$-Lipschitz solution of \eqref{equation div}. Assume $0$ a Lebesgue point of $\nabla u$. Then, $x \mapsto \dist(\nabla u(x),\mathcal{D}\cap \mathcal{S}) $ is continuous at $0$.
\end{theorem}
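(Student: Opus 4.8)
The plan is to rescale at the Lebesgue point, pass to the smooth approximations of Lemmas~\ref{l:modif}--\ref{l:approxu}, and then feed the localisation Proposition~\ref{p:Flatness} together with the topological Corollary~\ref{Structurebis}. Fix the Lipschitz bound $M$, replace $G$ by the modified field of Lemma~\ref{l:modif} and by its smoothings $G_\e$ of Lemma~\ref{l:approxG}, and set $p:=\nabla u(0)$. Since $0$ is a Lebesgue point of $\nabla u$, the averages $\tfrac{1}{|B_\rho|}\int_{B_\rho}|\nabla u-p|$ tend to $0$; together with the Lipschitz bound and Poincaré's inequality this gives, by an elementary oscillation estimate, that for every $\e_1>0$ there exist $\rho\in(0,1)$ and an affine map $\ell$ with $\nabla\ell=p$ such that $\sup_{B_\rho}|u-\ell|\le\e_1\rho$. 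Rescaling, $v(x):=\rho^{-1}\big(u(\rho x)-\ell(0)\big)$ is an $M$-Lipschitz solution of \eqref{equation div} in $B_1$ with $\sup_{B_1}|v-l_p|\le\e_1$, where $l_p(x):=p\cdot x$, and $\nabla v(B_s)=\nabla u(B_{\rho s})$. After a harmless dyadic rescaling (one may work on $B_{1/2}$ and use the interior gradient bound of Lemma~\ref{l:approxu}) the approximations $v_\e$ of $v$ still satisfy $\sup_{B_1}|v_\e-l_p|\le C\e_1$ for $\e$ small, with a gradient bound, with the $L^2$ norms of $\nabla v_\e$ and $G_\e(\nabla v_\e)$, and with the monotony modulus all controlled independently of $\e$. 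One then distinguishes $p\notin\mathcal D\cap\mathcal S$ from $p\in\mathcal D\cap\mathcal S$; since $\dist(\cdot,\mathcal D\cap\mathcal S)$ is $1$-Lipschitz, it suffices in the first case to produce, for every $r>0$, some $\delta>0$ with $\nabla u\in\overline{B_r(p)}$ a.e.\ on $B_\delta$, and in the second case some $\delta>0$ with $\nabla u\in\overline{\mathcal N^r(\mathcal D\cap\mathcal S)}$ a.e.\ on $B_\delta$.

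\textbf{Case $p\notin\mathcal D\cap\mathcal S$.} Put $d_0:=\dist(p,\mathcal D\cap\mathcal S)>0$ and fix $r\in(0,d_0/4)$. Corollary~\ref{Structurebis} applied to $B_r(p)\subset\R^2\setminus(\mathcal D\cap\mathcal S)$ yields $\lambda,\Lambda,\eta_0>0$ and $q$ with $B_{\eta_0}(q)\subset B_r(p)\cap O_\lambda\cap V_\Lambda$; shrinking $\eta_0$ we may assume $p\notin B_{\eta_0}(q)$. Every point of the compact set $\overline{B_{2M}}\setminus\mathcal N^{d_0/2}(\mathcal D\cap\mathcal S)$ lies in some $O_\mu$ or $V_\Gamma$, so by compactness (and after decreasing $\lambda$, increasing $\Lambda$) one arranges $\overline{B_{2M}}\subset O_\lambda\cup V_\Lambda\cup\mathcal N^{d_0/2}(\mathcal D\cap\mathcal S)$ while still $B_{\eta_0}(q)\subset(O_\lambda\cap V_\Lambda)\setminus\overline{\mathcal N^{d_0/2}(\mathcal D\cap\mathcal S)}$. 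Accounting for the $\e$-losses of Lemma~\ref{l:approxG}, I would apply Proposition~\ref{p:Flatness} to $v_\e$ with this data and $p_0:=p$. Since $B_r(p)$ is connected, lies in $\overline{B_{2M}}\setminus\mathcal N^{d_0/2}(\mathcal D\cap\mathcal S)$, and contains $q$, it lies in the connected component $\mathcal C$ of that set which contains $q$; hence we are in the first alternative of the proposition and obtain $\delta>0$ with $\nabla v_\e(B_\delta)\subset B_{2r}(p)$. Letting $\e\to0$ (using $\nabla v_\e\to\nabla v$ a.e.) gives $\nabla u\in\overline{B_{2r}(p)}$ a.e.\ on $B_{\rho\delta}$; as $r$ is arbitrary, $\nabla u$ has essential limit $p$ at $0$, so $x\mapsto\dist(\nabla u(x),\mathcal D\cap\mathcal S)$ is continuous at $0$.

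\textbf{Case $p\in\mathcal D\cap\mathcal S$, and the main obstacle.} Now $\dist(p,\mathcal D\cap\mathcal S)=0$; fix $r>0$ and set $U:=\mathcal N^{r/2}(\mathcal D\cap\mathcal S)$, so $p\in U$. The difficulty I anticipate is that, without the connectedness condition \eqref{eq: Connexité condition}, $\overline{B_{2M}}\setminus U$ may have many connected components, and a single application of Proposition~\ref{p:Flatness} only excises one of them; handling all of them seems to need a scale $\delta>0$ that could degenerate. I would circumvent this by a compactness reduction keyed to the single level set $L_r:=\{\xi\in\overline{B_{2M}}:\dist(\xi,\mathcal D\cap\mathcal S)=r\}$, which is compact: cover it by finitely many balls $B_{r/8}(\xi_1),\dots,B_{r/8}(\xi_N)$; since $B_{r/4}(\xi_i)\subset\R^2\setminus(\mathcal D\cap\mathcal S)$, Corollary~\ref{Structurebis} gives $q_i$ and $\eta_i>0$ with $B_{\eta_i}(q_i)\subset B_{r/4}(\xi_i)\cap O_{\lambda_i}\cap V_{\Lambda_i}$; fix $\lambda,\Lambda$ common to all $i$ and a finite subcover so that $\overline{B_{2M}}\subset O_\lambda\cup V_\Lambda\cup U$. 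Apply Proposition~\ref{p:Flatness} once per $i$ to $v_\e$, with this $U$, with $p_0:=p$, and with internal radius $<r/2$: since $p\in U$ and the component $\mathcal C_i$ of $\overline{B_{2M}}\setminus U$ containing $B_{\eta_i}(q_i)$ satisfies $\dist(p,\mathcal C_i)\ge r/2$, the small ball around $p$ misses $\mathcal C_i$, so we land each time in the second alternative and get $\delta_i>0$ with $\nabla v_\e(B_{\delta_i})\subset\R^2\setminus\mathcal C_i$. Set $\delta:=\min_i\delta_i>0$.

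\textbf{Conclusion of the second case.} For $v_\e$ smooth, $K:=\nabla v_\e(B_\delta)$ is a path-connected subset of the gradient range, and the minimum point argument from the proof of Proposition~\ref{p:Flatness} (applied to $\tfrac12|x|^2+v_\e(x)-p\cdot x$, using the flatness of $v_\e$) furnishes a point of $K$ in $B_s(p)\subset U$ with $s$ as small as we wish. If $K\not\subset\mathcal N^r(\mathcal D\cap\mathcal S)$, a path in $K$ from a point of $U=\{\dist<r/2\}$ to a point of $\{\dist\ge r\}$ must cross the value $r$, hence contains a point $y$ with $\dist(y,\mathcal D\cap\mathcal S)=r$; the component of $y$ in $\overline{B_{2M}}\setminus U$ meets $L_r$, and by the finite-cover construction (any point of $L_r$ lies within $r/8$ of some $\xi_i$, so $B_{\eta_i}(q_i)\subset B_{3r/8}(y)\subset\overline{B_{2M}}\setminus U$, connected through $y$, provided the ambient radius in Proposition~\ref{p:Flatness} is taken large enough) this component contains $q_i$, i.e.\ equals $\mathcal C_i$; this contradicts $y\in K\subset\R^2\setminus\mathcal C_i$. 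Hence $K\subset\mathcal N^r(\mathcal D\cap\mathcal S)$; letting $\e\to0$ gives $\dist(\nabla u,\mathcal D\cap\mathcal S)\le r$ a.e.\ on $B_{\rho\delta}$, and since $r$ was arbitrary, continuity at $0$ follows. The one genuinely delicate point, as flagged, is that the finitely many applications of Proposition~\ref{p:Flatness} share a common positive $\delta$: the reduction to the single level set $L_r$ together with the path-connectedness of $\nabla v_\e(B_\delta)$ is exactly what legitimises the finite choice of balls.
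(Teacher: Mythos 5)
Your proof is correct, and while Case~1 coincides with the paper's argument, your treatment of Case~2 ($p\in\mathcal D\cap\mathcal S$) is a genuinely different route. The paper handles the multiplicity of connected components of $\overline{B_{2\tilde M}}\setminus\overline{\mathcal N^r(\mathcal D\cap\mathcal S)}$ head-on: it bounds their number by $4(2\tilde M+r/2)^2/r^2$ via the claim that the $r/2$-neighborhoods of distinct components are pairwise disjoint, and then applies Proposition~\ref{p:Flatness} once per component, taking the minimum of the finitely many $\delta$'s. You instead cover the compact level set $L_r=\{\dist(\cdot,\mathcal D\cap\mathcal S)=r\}$ by finitely many balls, excise only the (finitely many) components meeting $L_r$, and then use the connectedness of $\nabla v_\e(B_\delta)$ together with the intermediate value theorem applied to the $1$-Lipschitz function $\dist(\cdot,\mathcal D\cap\mathcal S)$: since the gradient image contains a point in $\mathcal N^{r/2}(\mathcal D\cap\mathcal S)$ (the minimum-point argument), any escape from $\mathcal N^{r}(\mathcal D\cap\mathcal S)$ would force it to cross $L_r$ and hence enter one of the excluded components. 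This buys you something real: your finiteness comes only from compactness of $L_r$, whereas the paper's disjointness claim $\mathcal N^{r/2}(\mathcal C_i)\cap\mathcal N^{r/2}(\mathcal C_j)=\emptyset$ is delicate --- two distinct components of the complement of an $r$-neighborhood can in fact lie at distance strictly less than $r$ from each other (a small component can be trapped by a ``necklace'' of $r$-balls whose centers all avoid a nearby exterior point), so your argument sidesteps the one step of the paper's proof that requires extra justification. Minor points to tidy up: in Case~1, to arrange $p\notin B_{\eta_0}(q)$ you must be allowed to move $q$, not merely shrink $\eta_0$ (the paper replaces $\rho$ by $\rho/2$ and relocates $q$); the segment joining $y\in L_r$ to $q_i$ must stay in the ambient ball, so the radius $2M$ should be enlarged to accommodate $\tilde M+3r/8$ as you note; and when passing to the approximations $G_m$, the component excluded by Proposition~\ref{p:Flatness} contains the corresponding component for $G$, so the exclusion you need survives. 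None of these affects the validity of the argument.
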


To prove this Theorem, we we combine the approximation procedure define in section 2.3 and the Localisation results of section 4. To do so, we need to check that $u$ is $\e$-close to an affine map at some scale. The following Lemma is precisely used in this purpose. The proof is standard and can be found in \cite[Theorem 6.5]{EvGa} 

\begin{lemma}
\label{Contprop}
Let $u :B_1 \to \R$ be a $M$-Lipschitz map, and assume that $0$ is a Lebesgue point of $\nabla u$. Then, for any $\e>0$, there exists $\rho > 0$ such that 
\[ \left \vert u(x)-\langle p,x \rangle - u(0) \right\vert \leq \e\rho \text{ for all } x \in B_{\rho}(0)           \] where $  \lim_{\delta \to 0} \frac{1}{\vert B_{\delta}\vert}\int_{B_{\delta}} \vert \nabla u(y)-p\vert dy = 0 $.
\end{lemma}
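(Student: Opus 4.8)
The plan is to prove the (stronger) statement that the linearization error decays strictly superlinearly at $0$. Let $p$ be the Lebesgue value of $\nabla u$ at $0$ and set $w(x):=u(x)-\langle p,x\rangle-u(0)$; I will show $\sup_{B_\rho}|w|=o(\rho)$ as $\rho\to 0$, which is stronger than what is asked. Note that $w(0)=0$, that $w$ is $L$-Lipschitz (hence continuous) with $L:=M+|p|\leq 2M$, that $\nabla w=\nabla u-p$ a.e., and that the Lebesgue-point hypothesis says precisely $\phi(\rho):=|B_\rho|^{-1}\int_{B_\rho}|\nabla w|\to 0$ as $\rho\to 0$; since $\phi$ is continuous in $\rho$ with $\phi(0^+)=0$, it is bounded on $(0,1]$, say $0\leq\phi\leq L_0$.

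First I would establish the $L^1$-average version $|B_\rho|^{-1}\int_{B_\rho}|w|=o(\rho)$. The Poincar\'e inequality on balls gives $|B_\rho|^{-1}\int_{B_\rho}|w-m_\rho|\leq C\rho\,\phi(\rho)$, where $m_\rho:=|B_\rho|^{-1}\int_{B_\rho}w$ and $C$ is the dimensional Poincar\'e constant. To control the mean itself I would compare consecutive dyadic radii: using $B_{\rho/2^{k+1}}\subset B_{\rho/2^{k}}$ and $|B_{\rho/2^{k}}|/|B_{\rho/2^{k+1}}|=4$,
\[
|m_{\rho/2^{k}}-m_{\rho/2^{k+1}}|\leq \frac{1}{|B_{\rho/2^{k+1}}|}\int_{B_{\rho/2^{k}}}|w-m_{\rho/2^{k}}|\leq 4C\,(\rho/2^{k})\,\phi(\rho/2^{k}).
\]
Since $w$ is continuous with $w(0)=0$ one has $m_{\rho/2^{k}}\to 0$ as $k\to\infty$, so $m_\rho=\sum_{k\geq 0}\bigl(m_{\rho/2^{k}}-m_{\rho/2^{k+1}}\bigr)$ and thus $|m_\rho|\leq 4C\rho\sum_{k\geq 0}2^{-k}\phi(\rho/2^{k})$. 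For each fixed $k$ the term $2^{-k}\phi(\rho/2^{k})\to 0$ as $\rho\to 0$ and is dominated by the summable $2^{-k}L_0$, so by dominated convergence $\sum_{k\geq 0}2^{-k}\phi(\rho/2^{k})\to 0$; hence $m_\rho=o(\rho)$, and combining with Poincar\'e, $|B_\rho|^{-1}\int_{B_\rho}|w|\leq C\rho\,\phi(\rho)+|m_\rho|=o(\rho)$.

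Then I would upgrade this to a sup bound via the Lipschitz estimate. Fix $\e>0$ and suppose $\sup_{B_\rho}|w|>\e\rho$ for some small $\rho$; choose $x_\rho\in B_\rho$ with $|w(x_\rho)|>\e\rho$. On $B_{\e\rho/(2L)}(x_\rho)$ — which lies in $B_{2\rho}$ once $\e<2L$ — the Lipschitz bound gives $|w|>\e\rho-L\cdot\frac{\e\rho}{2L}=\frac{\e\rho}{2}$, so $\int_{B_{2\rho}}|w|\geq\frac{\e\rho}{2}\,|B_{\e\rho/(2L)}|=c(\e,M)\,\rho^{3}$ with $c(\e,M)>0$, i.e. $|B_{2\rho}|^{-1}\int_{B_{2\rho}}|w|\geq c'(\e,M)\,\rho$. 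This contradicts the previous paragraph for $\rho$ small enough, so there is $\rho>0$ (indeed every small $\rho$) with $\sup_{B_\rho}|w|\leq\e\rho$, which is exactly $|u(x)-\langle p,x\rangle-u(0)|\leq\e\rho$ for $x\in B_\rho(0)$.

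The only genuine content is getting $o(\rho)$ rather than merely $O(\rho)$: this is where the Lebesgue-point hypothesis is actually used, through the dominated-convergence passage to the limit for the dyadic mean $m_\rho$ (equivalently, through $\int_0^1\phi(t\rho)\,dt\to 0$ if one prefers to argue via the radial identity $w(x)=\int_0^1\langle\nabla w(tx),x\rangle\,dt$, which is legitimate here since $\nabla w\in L^\infty$ and $|y|^{-1}$ is integrable near the origin in the plane). Everything else — the Poincar\'e inequality and the $L^1\to L^\infty$ upgrade — is routine, with the Lipschitz bound entering only in the latter.
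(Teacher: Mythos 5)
Your proof is correct, and it is essentially the standard argument that the paper delegates to the reference (Evans--Gariepy, Theorem 6.5): the Poincar\'e inequality on balls combined with a dyadic telescoping of the means to get the $o(\rho)$ decay of the $L^1$-average of $w=u-\langle p,\cdot\rangle-u(0)$, followed by the routine Lipschitz upgrade from an $L^1$ bound to a sup bound. All the steps check out, including the use of $w(0)=0$ and continuity to anchor the telescoping sum and the dominated-convergence passage that converts the Lebesgue-point hypothesis into the superlinear decay.
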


We are ready to prove Theorem \ref{Th inter}.

\begin{proof}[Proof of Theorem \ref{Th inter}]

First, we assume that $\mathcal{D}\cap \mathcal{S} \cap \overline{B_M} \neq \emptyset$, otherwise, $u \in C^1$ by \cite[Main Theorem]{LacLam} and there is nothing to prove.\\

Let $p_0$ such that \[ \lim_{\delta \to 0} \frac{1}{\vert B_{\delta}\vert}\int_{B_{\delta}}  \vert \nabla u(y)-p_0 \vert dy = 0.\] 

The idea is to apply Propositions \ref{p:Flatness} relatively to the position of the vector $\nabla u(0)$. By Lemma \ref{l:modif}, we do not change the equation by replacing $G$ with $\tilde{G}$ and therefore, we can assume that \[ \mathcal{D}(G)\cap \mathcal{S}(G) \subset \overline{B}_M.         \] Thanks to Lemma \ref{l:approxG} there exists a sequence of smooth, strongly monotone vector field $G_{\e}$ such that $ \omega_{G_\e} \geq \omega_G$ and for all $0 < \lambda <\Lambda< + \infty $  : 

\begin{align}
\label{Oapprox}
B_{2\e}(\xi) \subset O_{\lambda}(G) & \Rightarrow \xi \in O_{\lambda}(G_{\e})
\end{align}
and 
\begin{align}
\label{Vapprox}
B_{2\e}(\xi) \subset V_{\Lambda}(G) & \Rightarrow V_{\Lambda + \e}(G_{\e}).
\end{align}

We apply Lemma \ref{l:approxu} to obtain a sequence $u_{\e}$ of smooth solutions of the equation $\dv(G_{\e}(\nabla u_{\e}))=0$ which satisfies $ \vert \nabla u_{\e} \vert \leq \tilde{M}$ in $B_{1/2}$ for some $\tilde{M}\geq M$ and converges locally uniformly to $u$ in $B_1$. Rescaling to $B_1$, we assume $\vert \nabla u_{\e} \vert \leq \tilde{M}$ in $B_1$. We fix a sequence $\e(m) \to 0$ and write $G_m= G_{\e(m)}$, $u_m=u_{\e(m)}$. For $r>0$, we check that there exists $\delta > 0 $ independant of $m$ such that \[ \diam(\dist(\nabla u_m(B_{\delta})) \leq r.   \]

We distinguish weather or not $p_0$ belongs to $\mathcal{D}\cap \mathcal{S}$. \\

\noindent\textbf{Case 1.} $p_0\notin \mathcal{D}\cap\mathcal{S}. $  \\  
Since $p_0 \notin \mathcal{D}\cap \mathcal{S}$, eventually lowering $r$, we assume 
\begin{align}
\label{eq7}
B_r(p_0) \cap \mathcal{N}^r(\mathcal{D}(G)\cap \mathcal{S}(G))=\emptyset. 
\end{align}

Let $\mathcal{C}$ be the connected component of $\overline{B}_{2\tilde{M}} \setminus \mathcal{N}^r(\mathcal{D}\cap\mathcal{S}) $ which contains $B_r(p_0)$. By Corollary \ref{Structurebis} there exists $0<\lambda_0<\Lambda_0< \infty $ and a ball 
\begin{equation}
\label{Boule loin de p_0}
B_{\rho}(q) \subset O_{\lambda_0}(G)\cap V_{\Lambda_0}(G) \cap \mathcal{C}.
\end{equation}
Replacing $\rho$ with $\rho/2$ and eventually changing the localisation of the point $q$, we may assume that $p_0 \notin B_{\rho}(q)$. Now, as $\mathcal{D}$ and $\mathcal{S}$ are the complement of $\bigcup_{\lambda>0}O_{\lambda}(G)$ and $ \bigcup_{\Lambda>0} V_{\Lambda}(G)$, there exists $\infty>\Lambda_1>\lambda_1>0$ such that
\begin{equation}
\label{Covering0}
\overline{B_{2 \tilde{M}}} \setminus \mathcal{N}^r(\mathcal{D}\cap\mathcal{S}) \subset O_{\lambda_1}(G) \cup V_{\Lambda_1}(G). 
\end{equation} 

Set $\lambda = \min(\lambda_0,\lambda_1)$ and $\Lambda = \max(\Lambda_0,\Lambda_1)$. By monotonicity of those sets, \eqref{Boule loin de p_0} and \eqref{Covering0}, we have : 

\begin{align*}
& \overline{B_{2\tilde{M}}} \subset O_{\lambda}(G)\cup V_{\Lambda}(G)\cup \mathcal{N}^r(\mathcal{D}\cap \mathcal{S}), \\
& B_{\rho}(q) \subset O_{\lambda}(G)\cap V_{\Lambda}(G) \setminus \overline{(\mathcal{N}^r(\mathcal{D}\cap\mathcal{S}))}, \\
& p_0 \notin B_{\rho}(q).
\end{align*}

We write $\mathcal{C}_m$ the connected component of \[\overline{B}_{2\tilde{M}} \setminus (\mathcal{N}^r(\mathcal{D}\cap \left(\mathcal{S})) \cap (O_{\lambda}(G_m)\cup V_{2\Lambda}(G_m) \right) \] which contains $B_{\rho}(q)$.

 By \eqref{Oapprox} and \eqref{Vapprox}, we deduce that for all $m$ large enough :
\begin{align}
\label{eq8}
& \overline{B_{2\tilde{M}}} \subset V_{2\Lambda}(G_m) \cup O_{\lambda}(G_m)\cup \mathcal{N}^r(\mathcal{D}\cap \mathcal{S}),        \\
\label{eq9}
& B_{\rho}(q) \subset O_{\lambda}(G_m) \cap V_{2\Lambda}(G_m) \setminus \overline{(\mathcal{N}^r(\mathcal{D}\cap\mathcal{S}))}, \\
\label{eq10}
& p_0 \notin B_{\rho}(q). 
\end{align} 

Since $G_m$ is smooth, strongly monotone and $u_m$ is a smooth solution of the associated equation, and because of \eqref{eq8},\eqref{eq9},\eqref{eq10}, we can apply the rescaled version of Proposition \ref{p:Flatness} : there exists $\e > 0$ such that if for some $t>0$ we have
\[ \sup_{x \in B_t}            \vert u_m(x) - l_{p_0}(x) \vert \leq \e t \quad l_{p_0} \text{ affine with } \nabla l_{p_0}=p_0 \] 
then, $\nabla u_m(B_{t\delta/2}) \subset B_{r}(p_0) $ (the first case of Proposition \ref{p:Flatness} arises because $B_r(p_0) \subset \mathcal{C}_m$). Here, we recall that $\e,\delta$ depends on : 
\begin{itemize}
\item A Lebesgue number $\eta \in (0, \min(\rho/4,r/2)) $ of the covering \eqref{eq8} (any ball $B_{\eta}(\xi)$ is contained either in $O_{\lambda}(G_m),V_{2\Lambda}(G_m)$ or $\mathcal{N}^r(\mathcal{D}\cap\mathcal{S}))$).
\item The quantities $\lambda,\Lambda$ ,$ \| G_m(\nabla u_m ) \|_{L^2(B_1)} $ and $ \| \nabla u_m \|_{L^2(B_1)} $. \\
\item The monotony modulus $\omega_{G_m}$ via any $c>0$ such that $c \geq \inf_{(\eta/4, \tilde{M}+\eta)} \frac{\omega_{G_m}(t)}{t}$.
\end{itemize}

By Lemma \ref{l:approxG}, we have $\omega_{G_m}\geq \omega_G$ and any Lebesgue number of the covering \[ \overline{B_{2\tilde{M}}} \subset O_{\lambda}(G)\cup V_{\Lambda}(G)\cup \mathcal{N}^r(\mathcal{D}\cap \mathcal{S})          \] is still a Lebesgue number of the covering \eqref{eq8}. Also, by Lemma \ref{l:approxu}, we can  bound $ \| G_m(\nabla u_m ) \|_{L^2(B_1)} $ and $ \| \nabla u_m \|_{L^2(B_1)} $  uniformly in $m$. This means that we can choose $\e$ and $\delta $ uniform with respect to $m$. Fix such an $\e>0$ and let us check that $u_m$ is $\e$ close to an affine map for $m$ large enough. \\

Since $0$ is a Lebesgue point of $\nabla u$, by Lemma \ref{Contprop}, for any $\e> 0$ there exists $t := t(\e) $ for which \[ \sup_{x\in B_{t}} \left \vert u(x)-\langle p_0,x\rangle - u(0)\right \vert \leq \frac{\e}{2} t.              \]  Since $u_m$ converges locally uniformly toward $u$ (see Lemma \ref{l:approxu}), we obtain for all $m$ large enough :

\begin{align}
\label{equ_m}
\sup_{x\in B_{t}} \left \vert u_m(x)-\langle p_0,x \rangle- u_m(0) \right\vert \leq \e t.
\end{align}
Applying Proposition \ref{p:Flatness}  to \eqref{equ_m} gives the inclusion \[\nabla u_m(B_{t \delta/2}) \subset B_{r}(p_0) \] with $\delta$ and $t$ independant of $m$. Since the distance function is $1$ Lipschitz, we deduce that \[ \diam(\dist(\nabla u_m(B_{t\delta/2}),\mathcal{D}\cap \mathcal{S})) \leq r \] for all $m$ large enough. Letting $m$ going to infinity gives the result. \\

\noindent\textbf{Case 2.} $p_0 \in \mathcal{D}\cap \mathcal{S}$ \\

We first introduce the quantity :
\[K  = \text{ Nbr of connected component of } \overline{B_{2\tilde{M}}} \setminus \overline{\mathcal{N}^r(\mathcal{D}\cap \mathcal{S})}\]and we check that $K$ is finite for any $r>0$. Indeed, writing $\mathcal{C}_i$ the different connected component, we have \[ \mathcal{N}^{r/2}(\mathcal{C}_i)\cap \mathcal{N}^{r/2}(\mathcal{C}_j) = \emptyset \quad  \forall i\neq j \in \lbrace 1,,,K \rbrace. \] Since \[ \bigcup_{i=1}^{K} \mathcal{N}^{r/2}(\mathcal{C}_i) \subset \mathcal{N}^{r/2}(\overline{B_{2\tilde{M}}})              \] we deduce \[ \pi K \frac{r^2}{4} \leq K \min_{i \in \lbrace 1,,, K\rbrace} \lbrace \vert                  \mathcal{N}^{r/2}(\mathcal{C}_i) \vert \rbrace \leq \sum_{i=1}^{K} \vert \mathcal{N}^{r/2}(\mathcal{C}_i) \vert) \leq \pi (2\tilde{M}+r/2)^2  \] therefore \[ K \leq \frac{4(2\tilde{M}+r/2)^2}{r^2} < \infty \quad \forall r>0.                  \] \\

Now, we fix $r>0$ and denote $\mathcal{C}$ a connected component of \[ B_{2 \tilde{M}} \setminus \overline{\mathcal{N}^r(\mathcal{D}\cap \mathcal{S})}. \] In this connected component, we apply Corollary \ref{Structurebis} to obtain a ball $B_{\rho}(q)$ and numbers $0<\lambda<\Lambda< \infty $ such that \[ B_{\rho}(q) \subset O_{\lambda}(G)\cap V_{\Lambda}(G)\cap \mathcal{C}               \] with the $p_0 \notin B_{\rho}(q)$. Similarly to what we did above, we write $C_m$ the connected component of \[ \overline{B}_{2\tilde{M}} \setminus (\overline{\mathcal{N}^r(\mathcal{D}\cap \mathcal{S}}) \cap \left(O_{\lambda}(G_m)\cup V_{2\Lambda}(G_m) \right)  \] which contains $B_{\rho}(q)$.  Proposition \ref{p:Flatness} gives $\e$ and $\delta$, which can be choosen uniform with respect to $m$, so that if for some $t> 0$ we have : \[ \sup_{x \in B_t}            \vert u_m(x) - l_{p_0}(x) \vert \leq \e t \quad l_{p_0} \text{ affine with } \nabla l_{p_0}=p_0 \] 
then, $\nabla u_m(B_{t\delta/2}) \subset \overline{B_{2\tilde{M}}} \setminus \mathcal{C}_m $ (the second case occurs beacuse $B_r(p_0)$ is outside the connected component $\mathcal{C}_m$ which contains $B_{\rho}(q)$). Hence, for any connected component $\mathcal{C}$  of \[\overline{B_{2 \tilde{M}}} \setminus \overline{\mathcal{N}^r(\mathcal{D}\cap \mathcal{S})}        \] and because $0$ is a Lebesgue point of $\nabla u$, there exists $t>0$ and $\delta>0$ independant of $m$ such that \[ \nabla u_m(B_{t\delta/2}) \subset        \overline{B_{2\tilde{M}}} \setminus \mathcal{C}_m. \] Since this number of connected component is finite (bounded by K, independant of $m$), this means that there exists $\tilde{\delta}>0$ independant of $m$ such that for all $m$ large enough, \[ \nabla u_m(B_{\tilde{\delta}}) \subset \overline{\mathcal{N}^r(\mathcal{D}\cap\mathcal{S})}              \]
The conclusion follows by letting $m \to \infty$.

\end{proof}

\begin{remark}
The Theorem shows exactly that $\nabla u $ is continuous in a neigborhood of $0$, provided $0$ is a Lebesgue point of $\nabla u $ with $\nabla u(0) \in \overline{B_M} \setminus \left( \mathcal{D}\cap \mathcal{S} \right)$. When this point belongs to the degenerate/singular set, we can't localise $\nabla u(B_{\delta})$ around it, but only on a neighborhood of the bad set. 
\end{remark}

\section{Blow-ups at non regular points}

Theorem \ref{Th inter} does not imply that the map $x \mapsto \dist(\nabla u(x),\mathcal{D}\cap\mathcal{S} ) $ has a continuous representative in the whole $B_1$. However, in view of the localisation Theorem \ref{p:apriori}, we expect that given a non-Lebesgue point $x_0$, the image $\nabla u(B_{\delta}(x_0)) $ should be close to $\mathcal{D}\cap \mathcal{S}$. \\

Until the end of this section, we assume that $u(0)=0$. Given $\delta >0$, we write $u_{\delta}$ the rescaling defined by $u_{\delta}(\cdot)= \delta^{-1} u(\delta \cdot) $. The sequence $(u_{\delta})_{\delta >0} $ is a sequence of Lipschitz maps with Lipschitz constant $M$, and therefore, we may extract a subsequence $  (u_{\delta_j})_{j\in \mathbb{N}} $ such that $(\nabla u_{\delta_j})_{j\in \mathbb{N}} $ generates a family of Young measures $(\nu_{x})_{x\in B_1}$, with the property that for any smooth function $H$ : 

\begin{equation}
\label{Young measure}
\int_{B_1} H(\nabla u_{\delta_j}(x)) dx \to \int_{B_1} \int_{B_M} H(y) d\nu_{x}(y) dx \quad \text{ as } j\to \infty
\end{equation}

 see for instance \cite[Theorem 3 and Remark 3]{Balder}. Our goal is to show : 
 
\begin{proposition}
\label{p:supportYoung}
Let $u$ a Lipschitz solution of \eqref{equation div}, with $G$ monotone. Assume that $u$ is not $C^1$ in any neighborhood of $0$. Then, $\nu_{x}$ is supported in $ \mathcal{D}$ for almost every $x$ in $B_1$. 
\end{proposition}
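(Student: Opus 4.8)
The plan is to prove Proposition~\ref{p:supportYoung} by contradiction, combining an energy estimate away from $\mathcal{D}$ with the sublevel-set lemma of \cite{SV} (the two-dimensional trick relating superlevel sets of an $H^1$ function to its Dirichlet energy in annuli), exactly as announced in the Strategy section. Suppose $\nu_x$ is \emph{not} supported in $\mathcal{D}$ on a set of positive measure. Then there is $\lambda>0$, a compact set $K\subset O_\lambda(G)$ at positive distance from $\mathcal{D}$, and a set of $x\in B_1$ of positive measure with $\nu_x(K)>0$; by~\eqref{Young measure} applied to a suitable bump $H\geq\one_K$, this forces $\liminf_j |\{x\in B_{1/2}:\nabla u_{\delta_j}(x)\in K'\}|>0$ for a slightly larger compact set $K'\subset O_\lambda(G)$. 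The first task is therefore to set up the approximation $u_{\delta_j,\e}$ of each rescaling by smooth solutions (Lemmas~\ref{l:modif}--\ref{l:approxu}) so that the argument can be run at the level of smooth solutions and then passed to the limit; since $O_\lambda$ and $\omega_G$ are essentially preserved under the approximation this is routine bookkeeping.

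Next I would recall the energy estimate of \cite{DSS}: for smooth solutions $v$ of $\dv(G(\nabla v))=0$ with $|\nabla v|\leq M$, one has an a priori bound of the form $\int_{B_{1/2}\cap\{\nabla v\in O_\lambda\}}|\nabla^2 v|^2\,dx \leq C(\lambda,M,\|\nabla v\|_{L^2})$, i.e. full second-derivative control on the ``good'' set where ellipticity holds from below; equivalently, $w:=H(\nabla v)$ (for $H$ a fixed smooth function supported in $O_\lambda$) lies in $H^1(B_{1/2})$ with a uniform bound. The point of the \cite{SV} lemma (their Lemma~5) is that if such a $w$ has a uniformly bounded Dirichlet energy \emph{and} is nonzero on a set of definite measure in $B_{1/2}$, then by a Fubini/coarea argument over concentric annuli one finds, for every scale $\rho$, an annulus on which $\int |\na w|^2$ is small, and hence $w$ is close to a constant on the corresponding circle — so by the maximum principle applied to $\dv(G(\nabla v))=0$ (here the duality of \cite{LacLam}, writing the conjugate equation via $G^*$, lets one control $\nabla v$ itself and not merely $H(\nabla v)$) the image $\nabla v(B_{c\rho})$ is trapped near a single value $p\in \cl O_\lambda$. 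Iterating this as $\rho\to 0$, or rather applying it at the fixed scale of the $u_{\delta_j}$, yields that for $j$ large $\nabla u_{\delta_j}$ is, on a definite portion of $B_{1/2}$, confined to a ball around a point of $\cl O_\lambda$, which contradicts $0$ being a non-Lebesgue point of $\nabla u$ — because that would force a fixed macroscopic piece of every blowup to be near-affine, propagating (via Theorem~\ref{p:apriori} and Corollary~\ref{Structurebis}, as in Section~5) to $C^1$ regularity of $u$ near $0$, against the hypothesis.

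Concretely the chain of implications I want is: non-regularity at $0$ $\Rightarrow$ no single $p\notin\mathcal{D}\cap\mathcal{S}$ can dominate a blowup on a set of positive measure (else Theorem~\ref{Th inter}-type localisation would give $C^1$) $\Rightarrow$ via the \cite{SV}--energy--max-principle mechanism, $\nu_x$ cannot charge any compact subset of $\bigcup_\lambda O_\lambda$ on a positive-measure set of $x$ $\Rightarrow$ $\supp\nu_x\subset\R^2\setminus\bigcup_\lambda O_\lambda=\mathcal{D}$ a.e. One must be slightly careful that the energy bound of \cite{DSS} is only on the sublevel set $\{\nabla v\in O_\lambda\}$, so $H$ must be chosen with $\supp H$ compactly inside $O_\lambda$ and the ``small annular energy $\Rightarrow$ trapped gradient'' step must use the equation and not just $H(\nabla v)\in H^1$; this passage — getting genuine control on $\nabla v$ from control on a nonlinear quantity of it, uniformly along the approximation — is where the duality with $G^*$ and the maximum principle do the real work.

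The main obstacle I anticipate is precisely this last point: the \cite{SV} argument in its original form exploits that $F\equiv 0$ on a convex set so that $\nabla F(\nabla u)$ is a genuinely ``flat'' region, whereas here one only has ellipticity from below on $O_\lambda$ and must manufacture a substitute for the subsolution property. Making the annulus-to-trapped-gradient implication quantitative and uniform in $j$ (and in the smoothing parameter $\e$), while only assuming monotonicity of $G$, is the delicate part; everything else — extracting Young measures, reducing to compact subsets of $O_\lambda$, the finiteness-of-components counting, and the final passage $\e\to0$, $j\to\infty$ — is of the same flavour as the arguments already carried out in Sections~3 and~5.
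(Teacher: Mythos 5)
You have correctly identified the paper's ingredients (Young measures, the uniform $W^{1,2}$ bound on $H(\nabla u_m)$ for $H$ supported away from $\mathcal{D}$, the annulus dichotomy of \cite[Lemma 5]{SV}, and the two-dimensional maximum principle), but the proposal has two genuine gaps where the actual work lies. First, the quantitative core is missing: a single application of the dichotomy, or a vague ``for every scale there is an annulus with small energy'', does not suffice. The paper's Proposition \ref{p:subsequence} constructs a nested sequence of radii $\delta_{j_{i+1}}<\delta_{j_i}\sqrt{f(\delta_{j_i})/4}$ so that the annuli $B_{\delta_{j_i}}\setminus B_{\delta_{j_{i+1}}}$ are pairwise disjoint; at each scale the ``large annular energy'' alternative contributes at least $c\,f(\delta_{j_i})$ to the total Dirichlet energy of $H(\nabla u_m)$, and summing against the uniform bound of Lemma \ref{p:W22bound} forces $\sum_i f(\delta_{j_i})<\infty$, hence $f(\delta_{j_i})\to 0$. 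This disjoint-annuli summation is exactly the step you flag as ``the delicate part'' and leave unresolved; without it the argument does not close.

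Second, your treatment of the other alternative is incorrect as stated. If $H(\nabla u_m)\geq 5/8$ on a circle $\partial B_s$, the Hartman--Nirenberg principle $\partial\,\nabla u_m(B_s)\subset \nabla u_m(\partial B_s)$ traps $\nabla u_m(B_s)$ in the \emph{fixed} ball $B_{3\eta/4}(\xi)\subset O_\lambda(G)$ --- not ``near a single value $p$'', and this does not make any blowup ``near-affine''. Consequently Theorem \ref{p:apriori} and Corollary \ref{Structurebis} are not the right tools here, and there is nothing to contradict about $0$ being a non-Lebesgue point (which is not the hypothesis of the proposition anyway). The correct and much shorter conclusion is that $\nabla u(B_{\delta_{j_{i+1}}})\subset O_\lambda(G)$ in the limit, whence $u\in C^1$ near $0$ by \cite[Theorem 1.3]{LacLam}, contradicting directly the assumption that $u$ is not $C^1$ in any neighborhood of $0$. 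Finally, the duality via $G^*$ plays no role in this proposition; it is only used afterwards to transfer the conclusion from $\mathcal{D}$ to $\mathcal{S}$, so attributing the ``real work'' of the annulus-to-trapping step to it is a misreading: the maximum principle alone (via $\det\nabla^2 u_m\leq 0$) does that work.
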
  

A direct consequence is : 

\begin{Corollary}
Let $G$ continuous and strictly monotone, and  $u$ a solution of \eqref{equation div}. Then, for any $x_0 \in B_1$, either $x \mapsto \dist(\nabla u(x),\mathcal{D})$ is continuous at $x_0$, or it holds \[ \frac{1}{\vert B_{\delta}\vert} \int_{B_{\delta}(x_0)} \dist(\nabla u(y),\mathcal{D})dy \to 0 \text{ as } \delta \to 0.                 \] 
\end{Corollary}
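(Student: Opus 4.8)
The plan is to deduce the Corollary from Proposition \ref{p:supportYoung} by a standard Young measure / approximate continuity argument, reducing first to the point $x_0=0$ via translation invariance of \eqref{equation div}, and assuming without loss of generality $u(0)=0$. There are two cases. If $u$ is $C^1$ in some neighborhood of $0$, then $\nabla u$ is continuous near $0$ and in particular the map $x\mapsto\dist(\nabla u(x),\mathcal{D})$ is continuous at $0$, so the first alternative holds and there is nothing to prove. The substance is in the other case, where $u$ fails to be $C^1$ in every neighborhood of $0$: here I claim the averaged integral tends to $0$, i.e. the second alternative holds.

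In that case, fix any sequence $\delta\to 0$; it suffices to show that along a further subsequence $\delta_j\to 0$ one has $\frac{1}{|B_{\delta_j}|}\int_{B_{\delta_j}(0)}\dist(\nabla u,\mathcal{D})\,dy\to 0$, since any sequence has such a subsequence and the limit is the same ($0$), which forces the full limit. Rescaling, $\frac{1}{|B_{\delta_j}|}\int_{B_{\delta_j}(0)}\dist(\nabla u(y),\mathcal{D})\,dy=\frac{1}{|B_1|}\int_{B_1}\dist(\nabla u_{\delta_j}(x),\mathcal{D})\,dx$. Along the subsequence extracted in the construction of the Young measures $(\nu_x)_{x\in B_1}$ from $(\nabla u_{\delta_j})$, apply the fundamental theorem for Young measures \eqref{Young measure}, with the bounded continuous integrand $H(\cdot)=\dist(\cdot,\mathcal{D})$ on $\overline{B_M}$ (one may take $H$ smooth after a harmless mollification, or invoke the version of \eqref{Young measure} valid for all bounded Carathéodory integrands): this gives
\[
\frac{1}{|B_1|}\int_{B_1}\dist(\nabla u_{\delta_j}(x),\mathcal{D})\,dx \longrightarrow \frac{1}{|B_1|}\int_{B_1}\int_{\overline{B_M}}\dist(y,\mathcal{D})\,d\nu_x(y)\,dx.
\]
By Proposition \ref{p:supportYoung}, $\nu_x$ is supported in $\mathcal{D}$ for a.e. $x\in B_1$, so $\int_{\overline{B_M}}\dist(y,\mathcal{D})\,d\nu_x(y)=0$ for a.e. $x$, whence the right-hand side is $0$. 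This proves the averaged quantity tends to $0$ along $\delta_j$, and by the subsequence argument along the original $\delta\to 0$, establishing the second alternative.

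The main obstacle is not in this deduction, which is routine, but in the already-granted Proposition \ref{p:supportYoung}; a secondary technical point worth spelling out is the passage from $H$ smooth (as required in \eqref{Young measure}) to $H=\dist(\cdot,\mathcal{D})$, which is only Lipschitz: this is handled by approximating $\dist(\cdot,\mathcal{D})$ uniformly on the compact $\overline{B_M}$ by smooth functions and using the uniform bound $|\nabla u_{\delta_j}|\le M$, or by citing the standard extension of the Young measure convergence to bounded continuous test functions. One should also note that the two alternatives in the Corollary are genuinely exhaustive: approximate continuity of $\dist(\nabla u,\mathcal{D})$ at $0$ with approximate value $\dist(p_0,\mathcal{D})$ where $p_0$ is the Lebesgue value of $\nabla u$ at $0$ (when $0$ is a Lebesgue point) already follows from the above, while at non-Lebesgue points the $C^1$/non-$C^1$ dichotomy handles it; in all non-$C^1$ cases the second alternative with limit $0$ is what the Young measure argument yields.
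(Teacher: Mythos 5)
Your proof is correct and follows essentially the route the paper intends: the paper presents this Corollary as a direct consequence of Proposition \ref{p:supportYoung}, and your argument (translation to $x_0=0$, the $C^1$/non-$C^1$ dichotomy, rescaling, testing the Young measures against $\dist(\cdot,\mathcal{D})$ after smoothing, and the subsequence-of-a-subsequence argument to upgrade to the full limit $\delta\to 0$) is exactly the standard deduction being left implicit. No gaps.
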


We introduce our notations first, and prove  preliminary Lemmas, which will lead to the proof of Proposition \ref{p:supportYoung}. \\ For $\e>0$ and by compactness of $\overline{B_{2M}} \setminus \mathcal{N}^{\e}(\mathcal{D})$, it is sufficient to prove that for any ball $B_{\eta}(\xi) \subset \overline{B_{2M}} \setminus \mathcal{N}^{\e}(\mathcal{D})$, we have the support of $\nu_x$ is outside $B_{\eta}(\xi)$. \\
Until the end, we fix $\e>0$, and a ball $B_{\eta}(\xi) \subset\overline{B_{2M}} \setminus \mathcal{N}^{\e}(\mathcal{D})$.

We define $H$ a smooth function satisfying : 

\begin{align*}
 H(\zeta) = 
\begin{cases}
0 &\text{ if }\zeta\in \R^2 \setminus B_{\eta}(\xi),\\
1 &\text{ if }\zeta\in B_{\eta/2}(\xi).
\end{cases}
\end{align*}

We also require 
\begin{equation} 
\label{eq:H_kgeq5/8}
\lbrace H \geq 5/8 \rbrace \subset B_{3\eta/4}(\xi)                     \end{equation}
and 

\begin{equation} 
\label{eq:H_kleq3/4}
\lbrace H \leq 3/4 \rbrace \subset \R^2 \setminus B_{2\eta/3}(\xi).
\end{equation}
Observe that, the map $H$ satisfies : 
\begin{equation}
\label{eq:supportH_k}
\lbrace \nabla H \neq 0 \rbrace \subset B_{\eta}(\xi) \subset O_{\lambda}(G)
\end{equation}
where $ \lambda = \lambda(\e)$ is such that $\overline{B_{2M}}\setminus \mathcal{N}^{\e/2}(\mathcal{D}) \subset O_{\lambda}$. Recall $(u_m)_{m\geq 0}$ is the approximating sequence defined in section 2. Following  \cite[Lemma 2.4]{LacLam}, we have a uniform $W^{1,2}$ estimate on the sequence $H(\nabla u_m)$ : 

\begin{lemma}
\label{p:W22bound}
There exists $m_0 \in \mathbb{N}$ and $C(\e,H,G) >0$ such that for any $m\geq m_0$ : 
\[ \int_{B_{1/2}} \vert \nabla \left( H(\nabla u_m) \right) \vert^2 dx \leq C. \]
\end{lemma}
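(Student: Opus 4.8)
The goal is a uniform Caccioppoli-type estimate for the scalar function $v_m := H(\nabla u_m)$ on $B_{1/2}$. The strategy is to test the linearised equation satisfied by $\nabla u_m$ against a suitable multiple of $v_m$ times a cutoff, exploiting crucially that $\nabla H$ is supported in the elliptic region $O_\lambda(G)$ by \eqref{eq:supportH_k}. Since $G_m$ is smooth and strongly monotone and $u_m$ is a smooth solution, each directional derivative $w = \partial_e u_m$ solves the linear equation $\dv(\nabla G_m(\nabla u_m)\nabla w)=0$; differentiating $H(\nabla u_m)$ produces $\nabla v_m = (\nabla H)(\nabla u_m)\,\nabla^2 u_m$, and since $(\nabla H)(\nabla u_m)$ is nonzero only where $\nabla u_m\in B_\eta(\xi)\subset O_\lambda(G)$, we only ever integrate $|\nabla^2 u_m|$ against weights supported where $\nabla G_m(\nabla u_m)\geq \lambda/2$ (using \eqref{Oapprox} and $m$ large). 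This is exactly the situation of \cite[Lemma 2.4]{LacLam}, whose proof we follow.

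\textbf{Key steps, in order.} First, fix a smooth cutoff $\varphi\in C_c^\infty(B_1)$ with $\varphi\equiv 1$ on $B_{1/2}$, $0\le\varphi\le 1$. Second, using the pointwise identity $\nabla v_m = \sum_i \partial_i H(\nabla u_m)\,\nabla(\partial_i u_m)$ and testing each component equation $\dv(\nabla G_m(\nabla u_m)\nabla(\partial_i u_m))=0$ against $\partial_i H(\nabla u_m)\,\varphi^2$ (a legitimate test function since everything is smooth and compactly supported), expand and sum over $i$ to obtain an integral identity of the form
\[
\int \varphi^2 \langle \nabla G_m(\nabla u_m)\nabla v_m, \nabla v_m\rangle\,dx = -\,(\text{boundary-type terms in }\varphi\nabla\varphi) - (\text{terms from }\nabla^2 H),
\]
where all integrands are supported in $\{(\nabla H)(\nabla u_m)\ne 0\}\cup\{(\nabla^2 H)(\nabla u_m)\ne 0\}\subset \{\nabla u_m\in B_\eta(\xi)\}$. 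Third, on this support $\nabla G_m(\nabla u_m)$ is bounded below by $\lambda/2$ and above by some $\Lambda'$ depending only on $\|\nabla G_m\|_{L^\infty(B_\eta(\xi))}$ (finite for $m$ large by local uniform convergence $G_m\to G$ and the fact that $\overline{B_\eta(\xi)}$ is a compact subset of the Lipschitz region); hence the left side controls $\int_{B_{1/2}}|\nabla v_m|^2$ from below up to the constant $\lambda/2$. Fourth, bound the right-hand side: the $\nabla\varphi$ terms are controlled by $\int |\nabla\varphi|^2\,|(\nabla H)(\nabla u_m)|^2\,|\nabla^2 u_m|\cdot(\dots)$ and, after Young's inequality to absorb the $\varphi^2|\nabla v_m|^2$ part, reduce to $C(H)\int_{B_1}|\nabla^2 u_m|^2\,\one_{\{\nabla u_m\in B_\eta(\xi)\}}\,dx$ (and similarly the $\nabla^2 H$ term); this last integral is bounded uniformly in $m$ by the energy estimate of \cite{DSS}, i.e. $\int_{B_{3/4}}|\nabla^2 u_m|^2$ over the region where $\nabla u_m$ stays in a fixed elliptic set is controlled by $\lambda,\Lambda'$ and $\|\nabla u_m\|_{L^2(B_1)}$, which is itself uniformly bounded by Lemma~\ref{l:approxu}. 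Combining gives the stated bound with $C=C(\e,H,G)$ and some $m_0$.

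\textbf{Main obstacle.} The delicate point is making rigorous that the ``bad'' curvature $\int |\nabla^2 u_m|^2$ appearing on the right-hand side is only ever weighted by the indicator of $\{\nabla u_m\in B_\eta(\xi)\}$ — i.e. that no derivative of $H$ ``leaks'' outside the elliptic region — and then invoking the correct local $W^{2,2}$ estimate of \cite{DSS} valid precisely on such an elliptic patch; one must also check the ellipticity constants used there can be taken uniform in $m$, which follows from $G_m\to G$ locally uniformly together with \eqref{Oapprox}--\eqref{Vapprox} and $\omega_{G_m}\ge\omega_G$. Everything else is a routine Caccioppoli computation, which is why we only sketch it and refer to \cite[Lemma 2.4]{LacLam} for the details.
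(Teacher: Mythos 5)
Your two key ingredients are the right ones and are in fact exactly the paper's: the chain rule $\nabla\left(H(\nabla u_m)\right)=\nabla H(\nabla u_m)\,\nabla^2 u_m$ localises everything to the set $\lbrace \nabla u_m\in B_\eta(\xi)\rbrace$, where $B_\eta(\xi)\subset O_\lambda(G)\subset O_{\lambda/2}(G_m)$ for $m$ large by \eqref{eq:supportH_k} and Lemma~\ref{l:approxG}, and a uniform-in-$m$ $W^{2,2}$ estimate on $\nabla u_m^{-1}(O_{\lambda/2}(G_m))$ then finishes. The paper's proof consists of exactly these two lines: the pointwise bound $\vert\nabla(H(\nabla u_m))\vert^2\leq\Vert\nabla H\Vert_{L^\infty}^2\vert\nabla^2 u_m\vert^2$ on the preimage of the patch, followed by the degenerate $W^{2,2}$ estimate of \cite[Lemma 2.4]{LacLam}, which bounds $\int_{\nabla u_m^{-1}(O_{\lambda/2}(G_m))\cap B_{1/2}}\vert\nabla^2 u_m\vert^2$ by $c_0\lambda^{-2}\Vert G_m(\nabla u_m)\Vert_{L^2(B_1)}^2$, itself uniformly bounded via Lemma~\ref{l:approxu}.

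The intermediate Caccioppoli computation you insert is both redundant and, as sketched, does not go through. It is redundant because the quantity you end up with on the right-hand side, $C(H)\int\vert\nabla^2 u_m\vert^2$ restricted to $\lbrace\nabla u_m\in B_\eta(\xi)\rbrace$, already dominates $\int_{B_{1/2}}\vert\nabla(H(\nabla u_m))\vert^2$ pointwise by the chain rule, so once you can bound it (which you must, to control your right-hand side) you are done without any testing. It does not go through because of two points. First, closing a Caccioppoli estimate and absorbing the $\nabla\varphi$ cross-terms requires an upper bound $\Lambda'$ on $\nabla G_m(\nabla u_m)$ on the support of $\nabla H(\nabla u_m)$; but $B_\eta(\xi)$ is only known to avoid $\mathcal{N}^{\e}(\mathcal{D})$, it may perfectly well meet $\mathcal{S}$, where $\nabla G$ is unbounded, and locally uniform convergence $G_m\to G$ gives no control on $\nabla G_m$, so the claimed $\Lambda'$ is not available. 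Second, testing $\dv(\nabla G_m(\nabla u_m)\nabla\partial_i u_m)=0$ against $\partial_i H(\nabla u_m)\varphi^2$ and summing over $i$ produces the quadratic form $\sum_{i,j}\partial_{ij}H(\nabla u_m)\langle\nabla G_m\nabla\partial_i u_m,\nabla\partial_j u_m\rangle$, not the coercive term $\langle\nabla G_m\nabla v_m,\nabla v_m\rangle=\sum_{i,j}\partial_i H\,\partial_j H\,\langle\nabla G_m\nabla\partial_i u_m,\nabla\partial_j u_m\rangle$; since $\nabla^2 H$ has no sign, that identity yields no lower bound on $\int\varphi^2\vert\nabla v_m\vert^2$. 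The fix is to drop the testing entirely and observe that the $W^{2,2}$ estimate on the elliptic patch depends only on the lower ellipticity bound $\lambda$ and on $\Vert G_m(\nabla u_m)\Vert_{L^2(B_1)}$ — no upper bound is required — which is precisely what makes the lemma work on a ball that avoids $\mathcal{D}$ but not $\mathcal{S}$.
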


\begin{proof}
If $m_0$ is large enough (depending only on $\e$), by \eqref{eq:supportH_k} and Lemma \ref{l:approxG} we have \[ \lbrace \nabla H \neq 0 \rbrace \subset O_{\lambda}(G) \subset O_{\lambda/2}(G_m) \] for $\lambda(\e) >0$ given by \eqref{eq:supportH_k}, independant of $m$. Using the $W^{2,2}$ estimate (see \cite[Lemma 2.4]{LacLam}) on smooth solution $u$ in the set $\nabla u^{-1}(O_{\lambda})$, we have : 
\begin{align*}
\int_{B_{1/2}} \vert \nabla \left( H(\nabla u_m) \right) \vert^2 & \leq \int_{\nabla u_{m}^{-1}(O_{\lambda/2}(G_m)\cap B_{1/2}} \vert \nabla H(\nabla u_m) \vert^2 \vert \nabla^2 u_m \vert^2 dx \\
& \leq \| \nabla H \|_{L^{\infty}} \int_{\nabla u_{m}^{-1}(O_{\lambda/2}(G_m)\cap B_{1/2}} \vert \nabla^2 u_m \vert^2 dx \\
& \leq \|\nabla H \|_{L^{\infty}}\frac{c_0}{\lambda^2} \| G_m(\nabla u_m )\|_{L^2(B_1)}^2 \\
& \leq C(\e,H,G). 
\end{align*}

\end{proof}

We introduce, for $\delta >0 $ : 

\[ f(\delta) := \frac{\vert \lbrace H(\nabla u) \geq 3/4 \rbrace \cap B_{\delta} \vert}{\vert B_{\delta} \vert} = \frac{\vert \lbrace H(\nabla u_{\delta}) \geq 3/4 \rbrace \cap B_1 \vert}{\vert B_1 \vert}               . \]

The following result, which can be found in \cite[Lemma 5]{SV}, is crucial : 

\begin{lemma}
\label{p:SVscale}
Let $ v \in W^{1,2}(B_1)\cap L^{\infty}(B_1)$ with $M\geq v\geq 0$. Assume that there exists $\nu >0$ such that $ \vert \lbrace v \geq \frac{3M}{4} \rbrace \cap B_1 \vert \geq \nu \vert B_1 \vert$.
Then, one of the two situations occurs :
\begin{itemize}
\item Either \[\int_{B_1\setminus B_{\sqrt{\nu/2}}} \vert \nabla v \vert^2 \geq  M^2 \frac{\nu}{512 \pi^2} \]  \\
\item Or, there exists $s \in ( \sqrt{\nu/2},1) $ such that $ v \geq \frac{5M}{8} $ on $\partial B_{s}$.
\end{itemize} 

\end{lemma}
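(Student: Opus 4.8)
\textbf{Proof strategy for Lemma \ref{p:SVscale}.}

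The plan is to argue by contradiction: assume that the first alternative fails, so that $\int_{B_1\setminus B_{\sqrt{\nu/2}}}|\nabla v|^2 < M^2\nu/(512\pi^2)$, and produce a radius $s\in(\sqrt{\nu/2},1)$ on which $v\geq 5M/8$ everywhere on $\partial B_s$. The natural device is the coarea/Fubini decomposition of the annular Dirichlet energy into circular slices. Writing the energy in polar coordinates, $\int_{B_1\setminus B_{\sqrt{\nu/2}}}|\nabla v|^2 \geq \int_{\sqrt{\nu/2}}^1\!\Big(\int_{\partial B_r}|\partial_\tau v|^2\,d\mathcal H^1\Big)dr$, where $\partial_\tau$ is the tangential derivative. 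By the mean value theorem in the radial variable, there is a set of radii of measure at least $(1-\sqrt{\nu/2})/2 \geq 1/4$ (say, using $\nu\leq 1$) on which the slice energy $\int_{\partial B_r}|\partial_\tau v|^2$ is bounded by roughly $4/(1-\sqrt{\nu/2})$ times the total, hence by a constant multiple of $M^2\nu/\pi^2$; more carefully one picks a single good radius $s$ with $\int_{\partial B_s}|\partial_\tau v|^2\,d\mathcal H^1 \leq \frac{2}{1-\sqrt{\nu/2}}\cdot\frac{M^2\nu}{512\pi^2} \leq \frac{M^2\nu}{128\pi^2}$, using $1-\sqrt{\nu/2}\geq 1/2$.

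Next I would control the oscillation of $v$ on that circle $\partial B_s$. The restriction $v|_{\partial B_s}$ lies in $W^{1,2}(\partial B_s)$, and on a circle of radius $s\leq 1$ the one-dimensional Sobolev (or Morrey/Gagliardo–Nirenberg) inequality gives $\operatorname{osc}_{\partial B_s} v \leq \big(2\pi s\big)^{1/2}\big(\int_{\partial B_s}|\partial_\tau v|^2\,d\mathcal H^1\big)^{1/2} \leq \sqrt{2\pi}\cdot\frac{M\sqrt\nu}{\sqrt{128}\,\pi} = \frac{M\sqrt{\nu}}{8}\cdot\frac{1}{\sqrt{\pi/16}}$ — in any case, chasing the constants, one obtains $\operatorname{osc}_{\partial B_s} v < M/8$ once the slice energy bound above is in force (the $512\pi^2$ is precisely calibrated so that this works). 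The remaining point is to show $v$ is not uniformly below $5M/8$ on $\partial B_s$: since $|\{v\geq 3M/4\}\cap B_1|\geq \nu|B_1|$ while the annulus $B_1\setminus B_{\sqrt{\nu/2}}$ has measure $\pi(1-\nu/2)\geq \pi(1-\nu) $, and $|B_{\sqrt{\nu/2}}| = \pi\nu/2 < \nu|B_1|$, the set $\{v\geq 3M/4\}$ cannot be contained in $B_{\sqrt{\nu/2}}$; hence it meets the annulus, and by Fubini there is a positive-measure set of radii $r\in(\sqrt{\nu/2},1)$ for which $\partial B_r$ meets $\{v\geq 3M/4\}$. Intersecting this positive-measure set of radii with the positive-measure set of "good" radii found above (the good set has measure $\geq 1/4$ of the annular range while the bad-energy radii are a null-proportion set by Chebyshev), I can select a single $s$ that is simultaneously good for the slice energy and such that $\partial B_s$ contains a point with $v\geq 3M/4$. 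Combining with $\operatorname{osc}_{\partial B_s} v < M/8$ yields $v\geq 3M/4 - M/8 = 5M/8$ on all of $\partial B_s$, which is the second alternative.

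The main obstacle is bookkeeping the constants so that the two quantitative inputs — the averaged slice-energy bound from Chebyshev and the $1$D Sobolev oscillation estimate on a circle of radius at most $1$ — combine to give exactly the threshold $M/8$ below $3M/4$, i.e. verifying that the specific constant $512\pi^2$ (rather than some larger universal constant) suffices; this is where one must be careful that $s\leq 1$ is used to bound the circumference and that $\sqrt{\nu/2}$, hence $1-\sqrt{\nu/2}\geq 1/2$, enters correctly when passing from the total annular energy to a single slice. A secondary subtlety is ensuring the "good radius" set (small slice energy) and the "occupied radius" set ($\partial B_r\cap\{v\geq 3M/4\}\neq\emptyset$) genuinely overlap; this follows because the first has measure at least a fixed fraction of $(\sqrt{\nu/2},1)$ by Chebyshev while the complement of the second has measure at most $|\{v<3M/4\}|/(2\pi\sqrt{\nu/2})$-controlled — in fact it is cleaner to note directly that $\{v\geq 3M/4\}$ has measure $\geq \nu\pi - \pi\nu/2 = \pi\nu/2$ inside the annulus, so the occupied radii have measure bounded below, and a Chebyshev argument on the energy rules out only a small-measure set of radii, leaving a nonempty intersection.
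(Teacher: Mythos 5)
Your argument is essentially the standard proof of this statement; the paper itself does not reprove it but simply cites \cite[Lemma 5]{SV}, and the slicing-plus-Chebyshev-plus-one-dimensional-Sobolev scheme you describe is exactly the argument there. The constants do close: picking $s$ with $\int_{\partial B_s}|\partial_\tau v|^2\,d\mathcal H^1<8E/\nu<M^2/(64\pi^2)$ (where $E$ is the annular energy, assumed below $M^2\nu/(512\pi^2)$) gives $\operatorname{osc}_{\partial B_s}v\le\sqrt{2\pi}\cdot M/(8\pi)<M/8$, and the occupied radii form a set of measure at least $(\pi\nu/2)/(2\pi)=\nu/4>\nu/8$, which exceeds the Chebyshev-bad set.

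Two bookkeeping slips are worth flagging, though your closing paragraph already repairs the substantive one. First, $1-\sqrt{\nu/2}\ge 1/2$ is false for $\nu>1/2$; this is harmless. Second, and more importantly, your initial Chebyshev threshold $\tfrac{2}{1-\sqrt{\nu/2}}\cdot\tfrac{M^2\nu}{512\pi^2}$ only guarantees a good-radius set occupying a fixed fraction of the annular range $(\sqrt{\nu/2},1)$, whereas the occupied radii (those $r$ with $\mathcal H^1(\{v\ge 3M/4\}\cap\partial B_r)>0$) may have measure only of order $\nu$, so for small $\nu$ the two sets need not meet. The threshold must be calibrated against $\nu$ itself, e.g.\ $8E/\nu$ as above, so that the bad set has measure at most $\nu/8<\nu/4$; you state this correctly at the end, and with that choice the oscillation bound still comes out strictly below $M/8$, so the constant $512\pi^2$ suffices.
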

Next, we prove the following Lemma on the sequence $(u_m)_{m\in \mathbb{N}} $ of section 2  :

\begin{lemma}
\label{l:measureconvergence}
For any $\frac{1}{2} \geq \delta >0$, there exists $m:=m(\delta)$ such that for any $m\geq m(\delta)$ : 
\[ \frac{\vert \lbrace H(\nabla u_m) \geq 3/4 \rbrace \cap B_{\delta} \vert}{\vert B_{\delta} \vert}  \geq \frac{f(\delta)}{2}.                \]
\end{lemma}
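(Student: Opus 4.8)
The statement compares the measure of a superlevel set $\{H(\nabla u_m)\geq 3/4\}$ against that of $\{H(\nabla u)\geq 3/4\}$ inside the fixed ball $B_\delta$, and asserts that for $m$ large the former captures at least half of the latter. The natural tool is the $W^{1,2}$ convergence $u_m\to u$ from Lemma \ref{l:approxu}, which gives $\nabla u_m\to \nabla u$ strongly in $L^2(B_{1/2})$, hence (after passing to a subsequence) pointwise a.e. in $B_\delta$. Since $H$ is continuous, $H(\nabla u_m)\to H(\nabla u)$ pointwise a.e. in $B_\delta$, and by the bounded convergence theorem (all $|\nabla u_m|\leq \tilde M$, $H$ bounded) we get $H(\nabla u_m)\to H(\nabla u)$ in $L^1(B_\delta)$, in particular in measure.

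The only subtlety is that convergence in measure of $H(\nabla u_m)$ toward $H(\nabla u)$ does not directly control the measure of the superlevel set $\{H(\nabla u_m)\geq 3/4\}$ unless we avoid the level $3/4$. So first I would fix a threshold $t\in(5/8,3/4)$ at which the \emph{distribution function} $s\mapsto |\{H(\nabla u)\geq s\}\cap B_\delta|$ is continuous — such a $t$ exists for all but countably many values since the distribution function is monotone; equivalently, pick $t$ with $|\{H(\nabla u)=t\}\cap B_\delta|=0$. Then, using \eqref{eq:H_kgeq5/8} we have $\{H(\nabla u)\geq 3/4\}\subset\{H(\nabla u)\geq t\}$, so $f(\delta)|B_\delta|=|\{H(\nabla u)\geq 3/4\}\cap B_\delta|\leq|\{H(\nabla u)\geq t\}\cap B_\delta|$, and it suffices to show $|\{H(\nabla u_m)\geq t\}\cap B_\delta|\geq \tfrac12 f(\delta)|B_\delta|$ for $m$ large.

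To get this, I observe that $\{H(\nabla u_m)< t\}\cap\{H(\nabla u)\geq t+\sigma\}\subset\{|H(\nabla u_m)-H(\nabla u)|\geq\sigma\}$ for any small $\sigma>0$, whose measure tends to $0$ by convergence in measure; choosing $\sigma$ small enough that $|\{t\leq H(\nabla u)<t+\sigma\}\cap B_\delta|$ is less than, say, $\tfrac14 f(\delta)|B_\delta|$ (possible by continuity of the distribution function at $t$, after shrinking so the mass near $t$ is negligible — and noting $f(\delta)>0$ is the interesting case, the case $f(\delta)=0$ being trivial), I conclude
\[
|\{H(\nabla u_m)\geq t\}\cap B_\delta|\;\geq\;|\{H(\nabla u)\geq t+\sigma\}\cap B_\delta|-|\{|H(\nabla u_m)-H(\nabla u)|\geq\sigma\}\cap B_\delta|\;\geq\;\tfrac12 f(\delta)|B_\delta|
\]
for $m$ large. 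The main (very mild) obstacle is purely bookkeeping: arranging the threshold $t$ to be a continuity point of the distribution function and matching the $\sigma$-buffer against the two small error terms; there is no analytic difficulty since everything rests on the already-established strong $W^{1,2}$ convergence and the continuity and boundedness of $H$.
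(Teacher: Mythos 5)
Your derivation of the convergence in measure of $H(\nabla u_m)$ toward $H(\nabla u)$ on $B_\delta$ is fine (the paper obtains the same convergence slightly differently, from the uniform $W^{1,2}$ bound of Lemma \ref{p:W22bound} and Rellich compactness rather than directly from the strong $W^{1,2}$ convergence of Lemma \ref{l:approxu}), and you are right that convergence in measure alone does not control a superlevel set at an exact level. The problem is that your fix goes in the wrong direction. You lower the threshold to $t\in(5/8,3/4)$ and claim it suffices to prove $\vert\lbrace H(\nabla u_m)\geq t\rbrace\cap B_\delta\vert\geq\frac12 f(\delta)\vert B_\delta\vert$. But for $t<3/4$ one has $\lbrace H(\nabla u_m)\geq 3/4\rbrace\subset\lbrace H(\nabla u_m)\geq t\rbrace$, so a lower bound on the measure of the \emph{larger} set $\lbrace H(\nabla u_m)\geq t\rbrace$ gives no information on $\vert\lbrace H(\nabla u_m)\geq 3/4\rbrace\cap B_\delta\vert$, which is the quantity the lemma bounds (and the one fed into Lemma \ref{p:SVscale} at level $3M/4$, $M=1$, in the proof of Proposition \ref{p:subsequence}). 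The inclusion you invoke, $\lbrace H(\nabla u)\geq 3/4\rbrace\subset\lbrace H(\nabla u)\geq t\rbrace$, only enlarges the right-hand side of the inequality you are after; it does not replace the left-hand side by a set you can control.

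To bound $\vert\lbrace H(\nabla u_m)\geq 3/4\rbrace\cap B_\delta\vert$ from below via convergence in measure you must instead compare with $\lbrace H(\nabla u)\geq 3/4+\sigma\rbrace$ for some $\sigma>0$, i.e. \emph{raise} the threshold on the limit function: $\vert\lbrace H(\nabla u_m)\geq 3/4\rbrace\cap B_\delta\vert\geq\vert\lbrace H(\nabla u)\geq 3/4+\sigma\rbrace\cap B_\delta\vert-\vert\lbrace\vert H(\nabla u_m)-H(\nabla u)\vert\geq\sigma\rbrace\vert$. The obstruction, which your continuity-point device does not address because the level is fixed at $3/4$ and cannot be moved to a generic continuity point of the distribution function, is that $\vert\lbrace H(\nabla u)\geq 3/4+\sigma\rbrace\cap B_\delta\vert$ may be far below $f(\delta)\vert B_\delta\vert$ when $\lbrace H(\nabla u)=3/4\rbrace$ has positive measure in $B_\delta$ (the approximants could then approach the value $3/4$ from below there). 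So one must either argue that $\vert\lbrace H(\nabla u)=3/4\rbrace\cap B_\delta\vert$ is negligible compared with $f(\delta)\vert B_\delta\vert$ (for instance by choosing the cut level generically when $H$ is constructed), or adjust the threshold consistently in the statement and in its later application. As written, your reduction step is invalid and the proposal does not establish the stated inequality.
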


\begin{proof}
This is a consequence of the uniform $W^{1,2}$ bound on the sequence $(H(\nabla u_m))_{m\in \mathbb{N}}$ : by Lemma \ref{p:W22bound}, the sequence $ (H(\nabla u_m))_{m\geq m_0}$ is bounded in $W^{1,2}(B_{1/2})$. Hence, up to extraction, the sequence $(H(\nabla u_m))_{m\in \mathbb{N}} $ converges strongly in $L^2(B_{1/2})$ toward $H(\nabla u)$ and the conclusion follows.
\end{proof}

As a consequence of the previous results, we get : 

\begin{proposition}
\label{p:subsequence}
Assume $u$ is not $C^1$ in any neighborhood of $0$. Then, for any sequence $\delta_j \to 0 $, there exists a subsequence $\delta_{j_i} \to 0 $ such that $f(\delta_{j_i}) \to 0 $ as $i\to \infty$.
\end{proposition}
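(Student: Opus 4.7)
The plan is to argue by contradiction: suppose no subsequence of $(\delta_j)$ yields $f \to 0$. Then after a further extraction I may assume $f(\delta_j) \geq \nu$ for some $\nu > 0$ and all $j$, and the goal becomes to show that this forces $u \in C^1$ in a neighborhood of $0$, contradicting the hypothesis. A first step is to upgrade Lemma \ref{p:W22bound} from the approximating $u_m$ to $u$ itself: by Lemma \ref{l:approxu} and continuity of $H$, one has $H(\nabla u_m) \to V := H(\nabla u)$ strongly in $L^2(B_{1/2})$, and the uniform $W^{1,2}$ bound together with weak compactness and weak lower semicontinuity give $V \in W^{1,2}(B_{1/2})$ with $\int_{B_{1/2}}|\nabla V|^2 \leq C$; in particular $\rho \mapsto \int_{B_\rho}|\nabla V|^2$ vanishes at $0$.

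Next I would extract from $(\delta_j)$ a geometric subsequence $\rho_k \to 0$ with $\rho_{k+1} \leq \rho_k \sqrt{\nu/2}$, so that the annuli $A_k := B_{\rho_k} \setminus B_{\rho_k \sqrt{\nu/2}}$ are pairwise disjoint and contained in $B_{\rho_0}$. Each rescaled $V_k(x) := V(\rho_k x)$ on $B_1$ satisfies $|\{V_k \geq 3/4\} \cap B_1| \geq \nu |B_1|$, so Lemma \ref{p:SVscale} with $M=1$ yields, for every $k$, one of the two alternatives
\begin{itemize}
\item[(a)] $\displaystyle \int_{A_k} |\nabla V|^2 \geq \frac{\nu}{512\pi^2}$, or
\item[(b)] there exists $r_k \in (\rho_k \sqrt{\nu/2}, \rho_k)$ with $V \geq 5/8$ on $\partial B_{r_k}$ in the trace sense.
\end{itemize}
The disjointness of the $A_k$ and the finiteness of $\int_{B_{\rho_0}} |\nabla V|^2$ rule out (a) for all but finitely many $k$, so (b) holds for $k$ large and provides radii $r_k \to 0$ along which $V \geq 5/8$ on $\partial B_{r_k}$; equivalently, $\nabla u$ takes values (in the trace sense) in $\{H \geq 5/8\} \subset B_{3\eta/4}(\xi) \subset O_\lambda(G)$ on these shrinking circles.

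To produce a contradiction I would then return to the smooth approximants $u_m$. Using the $L^2$-strong convergence $H(\nabla u_m) \to V$ and a Fubini/generic-slice argument (possibly slightly perturbing $r_k$), one gets $H(\nabla u_m) \geq 5/8 - o(1)$ on $\partial B_{r_k}$ for $m$ large, so $\nabla u_m$ is essentially in $B_\eta(\xi)$ on $\partial B_{r_k}$ in an $L^2$-trace sense. Combining the maximum principle applied to $u_m - l_\xi$ (which solves the translated equation $\dv \tilde G_m(\nabla v) = 0$ for the strictly monotone field $\tilde G_m(\zeta) = G_m(\xi + \zeta) - G_m(\xi)$) with the Caccioppoli-type energy estimate of \cite{DSS} inside $O_\lambda(G_m)$, I would show that $\nabla u_m(B_{r_k})$ cannot hit some small ball $B_{\eta''}(q)$ located on the opposite side of $B_{3\eta/4}(\xi)$ within $O_{\lambda'} \cap V_{\Lambda'}$, sitting in the connected component of $\overline{B_{2\tilde M}} \setminus \mathcal{N}^\epsilon(\mathcal{D} \cap \mathcal{S})$ containing $\xi$. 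Theorem \ref{p:apriori} then localises $\nabla u_m(B_{\delta r_k/2})$ inside $B_\eta(\xi) \subset O_\lambda(G_m)$ (its escape alternative being forbidden by the boundary trace), yielding uniform $C^{1,\alpha}$ estimates; sending $m \to \infty$ gives $u \in C^1(B_{\delta r_k / 4})$, contradicting the assumption for $k$ large.

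The main obstacle is precisely this last step: transferring the $L^2$-trace control "$\nabla u_m \in B_\eta(\xi)$ on $\partial B_{r_k}$" into a genuine missing-ball property for $\nabla u_m(B_{r_k})$, as required by Theorem \ref{p:apriori}. The standard maximum principle only controls $u_m$, not $\nabla u_m$, and the equation can be arbitrarily degenerate inside $B_{r_k}$; the right propagation likely goes through the 2D stream function $w_m$ defined by $G_m(\nabla u_m) = J \nabla w_m$, together with a degree or topological argument on the pair $(u_m, w_m)$ that exploits the strict monotonicity of $G_m$.
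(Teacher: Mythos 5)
Your overall architecture (contradiction, $f(\delta_j)\geq\nu$, disjoint annuli, finiteness of the Dirichlet energy of $H(\nabla u)$ killing alternative (a) for large $k$) is essentially equivalent to the paper's direct argument, which instead sums the energies over the annuli to get $\sum_i f(\delta_{j_i})<\infty$. The problem is alternative (b), and you have correctly diagnosed where your proof breaks: you never convert the boundary information ``$\nabla u\in B_{3\eta/4}(\xi)$ on $\partial B_{r_k}$'' into an interior statement. The missing idea is the Hartman--Nirenberg maximum principle for the \emph{gradient map} of a smooth two-dimensional solution \cite{HN59}: since $\det(\nabla^2 u_m)\leq 0$ for smooth solutions of $\dv G_m(\nabla u_m)=0$ in the plane, one has $\partial\bigl(\nabla u_m(B_s)\bigr)\subset \nabla u_m(\partial B_s)$, so the inclusion $\nabla u_m(\partial B_s)\subset B_{3\eta/4}(\xi)$ (from \eqref{eq:H_kgeq5/8}) immediately yields $\nabla u_m(B_s)\subset B_{3\eta/4}(\xi)\subset O_\lambda(G)$. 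If this happens for infinitely many $m$ one passes to the limit and gets $\nabla u(B_{\delta_{j_{i+1}}})\subset O_\lambda(G)$, whence $u\in C^1$ near $0$ by \cite[Theorem 1.3]{LacLam} --- contradicting the hypothesis directly, with no need for Theorem \ref{p:apriori}, missing balls, or uniform Schauder estimates. Your closing paragraph gestures at a ``degree or topological argument'' via the stream function, which is indeed the right family of ideas, but you do not produce it, and without it the proof does not close.

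A second, related defect: you apply Lemma \ref{p:SVscale} to the limit $V=H(\nabla u)$, which only gives $V\geq 5/8$ on $\partial B_{r_k}$ in an $L^2$-trace sense for a non-smooth function; the maximum principle above is unavailable at that level. The paper instead runs the dyadic lemma on the smooth approximants $H(\nabla u_m)$ (using the uniform bound of Lemma \ref{p:W22bound} and the $L^2$ convergence to transfer the measure lower bound $f(\delta_{j_i})/2$ to $u_m$), precisely so that the second alternative produces a genuine circle on which the smooth map $\nabla u_m$ lands in $B_{3\eta/4}(\xi)$, to which Hartman--Nirenberg applies. Restructure your proof to apply the lemma at the level of $u_m$ and invoke \cite{HN59}; the rest of your argument then goes through.
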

\begin{proof}
Fix such a sequence $\delta_j$. One can assume $f(\delta_j) > 0 $ for all $j\in \mathbb{N}$ (if $f(\delta_0)=0$ for some $\delta_0 >0$, then $f(\delta)=0 $ for all $\delta \leq \delta_0$). We define the subsequence $\delta_{j_i}$ by \[\delta_{j_0} = \max \lbrace \delta_j \text{ such that } \delta_j \leq 1/2 \rbrace \text{ and } j_{i+1} = \min \lbrace j >j_i \vert \text{ } \delta_{j} < \delta_{j_i}\sqrt{f(\delta_{j_i})/4} \rbrace. \] Notice that the subsequence satisfies $\delta_{j_i} \leq 1/2 $ for any $i\geq 0$. \\ \\ We claim that for all $i \geq 0 $, there exists $m(i)$, such that for all $m\geq m(i)$, we have : 
\[ \int_{B_{\delta_{j_i}}\setminus B_{\delta_{j_{i+1}}}} \vert \nabla \left( H(\nabla u_m) \right) \vert^2 dx \geq \frac{ f(\delta_{j_i})}{\pi^21024}.                         \]

Fix $i\geq 0$. By Lemma \ref{l:measureconvergence}, there exists $m_0(i)$ such that for all $m\geq m_{0}(i)$, we have : 
\[  \frac{\vert \lbrace H(\nabla u_m) \geq 3/4 \rbrace \cap B_{\delta_{j_i}} \vert}{\vert B_{\delta_{j_i}} \vert}  \geq \frac{f(\delta_{j_i})}{2}                               . \]

For $m\geq m_{0}(i)$, we apply Lemma \ref{p:SVscale} (rescaled to $B_{\delta_{j_i}}$) to the non-negative function $ H(\nabla u_m) \in W^{1,2} \cap L^{\infty} $ ( with $W^{1,2}$ and $L^\infty$ bound independent of $m$ ). Hence : 
\begin{itemize}
\item Either  \[\int_{B_{\delta_{j_i}}\setminus B_{\delta_{j_{i+1}}}} \vert \nabla \left( H(\nabla u_m) \right) \vert^2 \geq \frac{f(\delta_{j_i})}{\pi^2 1024}.  \] \\
\item Or, there exists a radius $s \in (\delta_{j_{i+1}},\delta_{j_i})$  such that $H(\nabla u_m) \geq \frac{5}{8} $ on $\partial B_s$.
\end{itemize}

Now, if the second situation happens for $m \geq 0$, this means : 

\[ \nabla u_m(\partial B_s) \subset B_{3\eta/4}(\xi)                               \] 
because of \eqref{eq:H_kgeq5/8}. Applying the Hartman-Niremberg maximum principle (see \cite{HN59}) (as a consequence of $\det(\nabla^2 u_m) \leq 0 $ in $B_1$) :  \[ \partial \nabla u_m(B_s) \subset \nabla u_m(\partial B_s)\]  we deduce \[ \nabla u_{m}(B_{s}) \subset B_{3\eta/4}(\xi) \subset O_{\lambda}(G)  \] hence, 
\[ \nabla u_m(B_{\delta_{j_{i+1}}}) \subset O_{\lambda}(G).              \] If this situation happens for infinitely many's $m$, we can construct a subsequence  with this property and therefore obtain the inclusion $\nabla u(B_{\delta_{j_{i+1}}}) \subset O_{\lambda}(G)$, which is impossible because by \cite[Theorem 1.3]{LacLam}, this would mean that $u \in C^1(B_{{\delta_{j_{i+1}}}/2})$. Hence, the second case can not occur infinitely many times. Therefore, there exists $m(i)\geq m_{0}(i)$ such that for all $m\geq m(i) $ :
\[\int_{B_{\delta_{j_i}}\setminus B_{\delta_{j_{i+1}}}} \vert \nabla \left( H(\nabla u_m) \right) \vert^2 \geq \frac{f(\delta_{j_i})}{\pi^2 1024}.  \]To conclude the proof, we use the uniform $W^{1,2}$ bound on the sequence $H(\nabla u_m)$ : Fix $I \in \mathbb{N}$, and let $m \geq \max \lbrace m(i), i \in (0,...,I) \rbrace$. Because of \eqref{eq:supportH_k}, $\nabla H$ is zero in a neighborhood of $\mathcal{D}$ , thus, we can estimate, using Lemma \ref{p:W22bound} :

\begin{align*}
\frac{1}{\pi^2 1024} \sum_{i=0}^{I} f(\delta_{j_i}) & \leq \int_{B_{\delta_{j_0}} \setminus B_{\delta_{j_I}}} \vert \nabla (H(\nabla u_m)) \vert^2  dx \\
& \leq C.
\end{align*} Where we used that $B_{\delta_{j_0}} \subset B_{1/2}$. This means that the series on the left is convergent, and the conclusion follows.

\end{proof}

We are ready to prove Proposition \ref{p:supportYoung}. 

\begin{proof}[Proof of Proposition~\ref{p:supportYoung}]
Fix $\e >0$ and let us prove that $\nu_{x}$ is supported outside $B_{2\eta/3}(\xi)$. Since $u$ is not $C^1$ in any neighborhood of $0$,  we apply Proposition \ref{p:subsequence} to deduce that for any $\delta_{j} \to 0$, we can extract a subsequence $(\delta_{j_i})_{i\in \mathbb{N}}$ such that $f(\delta_{j_i}) \to 0 $ as $i\to \infty$. Let us define a smooth map $\tilde{H} : B_1 \to \R$ through \[\tilde{H} = 0 \text{ on } \lbrace H <3/4 \rbrace \] with $\tilde{H} \geq 0 $ and $\sup_{x\in \R^2}\vert \tilde{H} \vert \leq 1 $. \\

We estimate :
\begin{align*}
\int_{B_1} \tilde{H}(\nabla u_{\delta_{j_i}}(x))dx & \leq \left\vert \lbrace \tilde{H}(\nabla u_{\delta_{j_i}}) > 0 \rbrace \cap B_1 \right\vert \\
& \leq \left\vert  \lbrace H(\nabla u_{\delta_{j_i}}) \geq 3/4 \rbrace \cap B_1 \right\vert \\
& \leq \vert B_1 \vert  f(\delta_{j_i}) \to 0 \text{ as } i \to \infty.
\end{align*}

Thus, from \eqref{Young measure}, we deduce that \[ \int_{B_1} \int_{B_M} \tilde{H}(y) d\nu_x(y)dx = 0                  \] and therefore, for almost every $x$ in $B_1$, $\nu_{x}$ is supported in \[ \lbrace \tilde{H}=0 \rbrace \subset \lbrace H \leq 3/4 \rbrace \subset  \left(\R^2 \setminus B_{2\eta/3}(\xi) \right).   \] Since this is true for any ball $B_{\eta}(\xi) \subset \overline{B_{2M}} \setminus \mathcal{N}^{\e}(\mathcal{D})$, the proof is complete.
\end{proof}

At any point $x \in B_1$ where $u \in C^1$ in a neighborhood of $x$, we have \[            \frac{1}{\vert B_{\delta} \vert} \int_{B_{\delta}(x)} \vert \nabla u(y)-\nabla u(x) \vert dy \to 0  \text{ as } \delta \to 0    \] that is, $x$ is a Lebesgue point of $\nabla u$. A consequence of Proposition \ref{p:supportYoung} is :

\begin{Corollary}
\label{c:LPdistance}
Assume that $x \in B_1$ is not a Lebesgue point of $\nabla u$. Then : 
\[ \frac{1}{\vert B_{\delta} \vert} \int_{B_{\delta}(x)} \dist(\nabla u(y),\mathcal{D})dy \to 0 \text{ as } \delta \to 0   . \]
\end{Corollary}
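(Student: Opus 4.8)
The plan is to deduce Corollary~\ref{c:LPdistance} from Proposition~\ref{p:supportYoung} together with the fact that at a $C^1$ point the gradient is approximately continuous. First I would dispose of the easy case: if $u$ is $C^1$ in some neighborhood of $x$, then $x$ is a Lebesgue point of $\nabla u$, contrary to hypothesis; so the hypothesis ``$x$ is not a Lebesgue point'' forces $u$ to fail to be $C^1$ in every neighborhood of $x$. After translating so that $x=0$ and subtracting a constant so that $u(0)=0$, we are exactly in the situation of Proposition~\ref{p:supportYoung}, which gives that for \emph{any} sequence $\delta\to 0$ there is a subsequence along which the Young measures $(\nu_y)_{y\in B_1}$ generated by $\nabla u_{\delta_j}$ are supported in $\mathcal{D}$ for a.e.\ $y$.

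The second step is to translate this statement about Young measures into the claimed convergence of the averaged distance. The quantity to control is
\[
\frac{1}{|B_\delta|}\int_{B_\delta(0)}\dist(\nabla u(y),\mathcal{D})\,dy
=\frac{1}{|B_1|}\int_{B_1}\dist(\nabla u_\delta(y),\mathcal{D})\,dy .
\]
Since $|\nabla u_\delta|\le M$, the integrand is bounded (by $2M$ say), and $\zeta\mapsto\dist(\zeta,\mathcal{D})$ is continuous, so along the subsequence $\delta_{j_i}$ produced by Proposition~\ref{p:subsequence} one has, using \eqref{Young measure} applied to a smooth truncation of $\dist(\cdot,\mathcal{D})$ on $B_M$,
\[
\frac{1}{|B_1|}\int_{B_1}\dist(\nabla u_{\delta_{j_i}}(y),\mathcal{D})\,dy
\longrightarrow \frac{1}{|B_1|}\int_{B_1}\int_{B_M}\dist(z,\mathcal{D})\,d\nu_y(z)\,dy = 0,
\]
because each $\nu_y$ is supported in $\mathcal{D}$. (One should be slightly careful that $\dist(\cdot,\mathcal{D})$ is only Lipschitz, not smooth, so I would first approximate it uniformly on $\overline{B_M}$ by a smooth function, or invoke the standard fact that the Young-measure convergence \eqref{Young measure} extends to continuous bounded test functions by density.)

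The third step is the routine ``subsequence of every subsequence'' argument: we have shown that every sequence $\delta_j\to 0$ admits a subsequence along which $\frac{1}{|B_1|}\int_{B_1}\dist(\nabla u_\delta,\mathcal{D})\to 0$; since the limiting value $0$ is the same for all such subsequences, the full limit as $\delta\to 0$ exists and equals $0$. This gives exactly the stated conclusion. The only genuinely delicate point is the one already flagged — making sure the Young-measure identity \eqref{Young measure}, stated for smooth $H$, can legitimately be applied to (an approximation of) the Lipschitz function $\dist(\cdot,\mathcal{D})$ restricted to the compact set $\overline{B_M}$; everything else is bookkeeping. I expect no real obstacle here, since Proposition~\ref{p:supportYoung} does all the heavy lifting.
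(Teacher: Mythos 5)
Your argument is correct and is exactly the deduction the paper intends: the paper gives no explicit proof of this corollary, presenting it as ``a consequence of Proposition~\ref{p:supportYoung}'' combined with the remark immediately preceding it that $C^1$ points are Lebesgue points of $\nabla u$. The details you supply — reducing to $x=0$, testing \eqref{Young measure} against a smooth uniform approximation of the Lipschitz function $\dist(\cdot,\mathcal{D})$ on $\overline{B_M}$, and the subsequence-of-every-subsequence step — are the right ones and are handled correctly.
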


\begin{remark}
We will use the duality argument presented in \cite{LacLam}, to deduce that the same holds replacing $\mathcal{D}$ by $\mathcal{S}$. This explains why we stated Theorem \ref{Th inter} with $0$ being a Lebesgue point of $\nabla u$ instead of $0$ being a differentiability point of $u$ : a Lebesgue point of $\nabla u $ is a Lebesgue point of $G(\nabla u) = i\nabla v$ and the converse is true thanks to the properties of $G$ (see below). It is not directly obvious that the differentiablity of $u$ at a point implies the one of the dual map $v$.
\end{remark}

Recall that $i$ is the counter clock-wise rotation of angle $\pi/2$. Following \cite[Proposition 4.1]{LacLam}, for any Lipschitz solution of \eqref{equation div}, there is a strictly monotone vector field $G^*$ and a map $v : B_1 \to \R$ such that $-i\nabla v = G(\nabla u) $ and $v$ solves $\dv(G^*(\nabla v))=0$. The relation between the degenerate/singular sets of $G$ and $G^*$ is given by  \[ (iG)(\mathcal{S}(G)) = \mathcal{D}(G^*) \] We can use Corollary \ref{c:LPdistance} to obtain, provided $x$ is not a Lebesgue point of $\nabla v = iG(\nabla u)$ :

\begin{align*}
\frac{1}{\vert B_{\delta}\vert}\int_{B_{\delta}(x)} \dist(\nabla v(y),\mathcal{D}(G^*))dy & = \frac{1}{\vert B_{\delta}\vert}\int_{B_{\delta}(x)} \dist(iG(\nabla u)(y),\mathcal{D}(G^*))dy \\
& = \frac{1}{\vert B_{\delta}\vert}\int_{B_{\delta}(x)} \dist(iG(\nabla u)(y),iG(\mathcal{S}(G)))dy \\
& \to 0 \text{ as } \delta \to 0,
\end{align*}
which implies, by injectivity and continuity of $iG$ : 
\[\frac{1}{\vert B_{\delta}\vert}\int_{B_{\delta}(x)} \dist(\nabla u(y),\mathcal{S}(G))dy  \to 0 \text{ as } \delta \to 0. \]  This computations suggests that we can replace $\mathcal{D} $ by $\mathcal{S}$ in the case of strictly monotone fields. \\

There is a natural relation between the Lebesgue points of $\nabla u$ and those of $iG(\nabla u)$ : 

\begin{lemma}
\label{l:LPDuGDu}
$x \in B_1$ is a Lebesgue point of $\nabla u$ if and only if $x$ is a Lebesgue point of $iG(\nabla u)$.
\end{lemma}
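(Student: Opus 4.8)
The statement is an ``if and only if'' about Lebesgue points, and the natural strategy is to exploit that $G$ is a homeomorphism of $\R^2$ (after the harmless modification at infinity allowed by Lemma~\ref{l:modif}, since $|\nabla u|\le M$) together with the quantitative two-sided bound encoded in the modulus of monotony $\omega_G$. The key algebraic fact is that for a strictly monotone continuous $G$, both $G$ and $G^{-1}$ are \emph{uniformly continuous} on the relevant compact sets: on $\overline{B_M}$, $G$ has a modulus of continuity, and conversely, if $|\xi-\zeta|>t$ then $\langle G(\xi)-G(\zeta),\xi-\zeta\rangle \ge \omega_G(t)$, so $|G(\xi)-G(\zeta)|\ge \omega_G(t)/(2M)$ whenever $\xi,\zeta\in\overline{B_M}$; equivalently, $|G(\xi)-G(\zeta)|<\omega_G(t)/(2M)$ forces $|\xi-\zeta|\le t$. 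Thus there is a modulus $\sigma$ with $|\xi-\zeta|\le \sigma(|G(\xi)-G(\zeta)|)$ for all $\xi,\zeta\in\overline{B_M}$, and $\sigma(s)\to 0$ as $s\to 0$. So $G$ restricted to $\overline{B_M}$ is a bi-uniformly-continuous bijection onto its (compact) image.

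\textbf{Key steps.} First I would fix $x\in B_1$ and set $w=iG(\nabla u)$, which takes values in the compact set $K:=iG(\overline{B_M})$. Suppose $x$ is a Lebesgue point of $\nabla u$ with value $p$, i.e. $\frac{1}{|B_\delta|}\int_{B_\delta(x)}|\nabla u(y)-p|\,dy\to 0$. I want to show $x$ is a Lebesgue point of $w$ with value $iG(p)$. Write
\[
\frac{1}{|B_\delta|}\int_{B_\delta(x)} |w(y)-iG(p)|\,dy
= \frac{1}{|B_\delta|}\int_{B_\delta(x)} |G(\nabla u(y))-G(p)|\,dy,
\]
and split the domain of integration according to whether $|\nabla u(y)-p|\le \tau$ or $>\tau$, for a parameter $\tau>0$ to be sent to $0$ after $\delta$. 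On the first set the integrand is at most $\omega_G^{G}(\tau)$, the modulus of continuity of $G$ on $\overline{B_M}$; on the second set the integrand is bounded by $2\,\|G\|_{L^\infty(\overline{B_M})}$, while the measure of that set is controlled by $\tfrac{1}{\tau}\int_{B_\delta(x)}|\nabla u(y)-p|\,dy = o(|B_\delta|)$ by the Lebesgue point hypothesis. Hence $\limsup_{\delta\to 0}\frac{1}{|B_\delta|}\int_{B_\delta(x)}|w(y)-iG(p)|\,dy \le \omega_G^{G}(\tau)$, and letting $\tau\to 0$ gives the conclusion. The reverse implication is \emph{symmetric}: by Lemma~\ref{l:modif} we may take $G$ to be a homeomorphism, $G^{-1}$ is again strictly monotone and continuous (it is $iG^*(-i\,\cdot)$ composed with rotations, or directly: monotonicity of $G^{-1}$ follows from that of $G$), so the same estimate applies with $G$ replaced by the map $\zeta\mapsto G^{-1}(-i\zeta)$ on the compact set $K$, using its modulus of continuity and the ``inverse modulus'' $\sigma$ above. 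One subtlety: one must argue that if $x$ is a Lebesgue point of $w$ with some value $\ell\in K$, then $\ell=iG(p)$ where $p=G^{-1}(-i\ell)$, and that $x$ is then also a Lebesgue point of $\nabla u$ with value $p$ — this is exactly the forward implication applied to the field $\zeta\mapsto G^{-1}(-i\zeta)$ and the function $v$ with $\nabla v=w$ (or just the abstract statement ``uniform continuity preserves Lebesgue points and their values'').

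\textbf{Main obstacle.} The only real point requiring care is the quantitative lower bound $|G(\xi)-G(\zeta)|\ge c\,\omega_G(|\xi-\zeta|)$ on $\overline{B_M}$, i.e. that strict monotonicity plus the $L^\infty$ bound yields a genuine modulus $\sigma$ for $G^{-1}$ on the image; this is where $\omega_G$ enters and where one uses $|\xi-\zeta|\le 2M$. Everything else is the standard truncation argument for Lebesgue points composed with a uniformly continuous function, and the reduction to a homeomorphism via Lemma~\ref{l:modif} (legitimate because we only care about values of $\nabla u$ in $\overline{B_M}$). I would present the proof by first isolating a one-line lemma ``$\Phi$ uniformly continuous on a set containing the essential range $\Rightarrow$ Lebesgue points of $f$ are Lebesgue points of $\Phi\circ f$, with value $\Phi$ of the value'' and then applying it to $\Phi=iG$ and to $\Phi=(iG)^{-1}$.
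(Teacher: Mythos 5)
Your proof is correct and follows exactly the route the paper takes: the paper's own proof is the single line ``this is a direct consequence of the fact that $G$ is a homeomorphism,'' and your truncation argument together with the bi-uniform continuity of $G$ on $\overline{B_M}$ (via the modulus of monotony, or simply via the compactness of $\overline{B_M}$) is the correct expansion of that remark. The only detail worth adding is that the Lebesgue value $\ell$ of $iG(\nabla u)$ automatically lies in the closed set $iG(\overline{B_M})$, since $\dist(\ell, iG(\overline{B_M}))$ is bounded by the averaged integrand, so $(iG)^{-1}(\ell)$ is well defined.
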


\begin{proof}
This is a direct consequence of the fact that $G$ is an homeomorphism.
\end{proof}

Using this correspondence, we show a better version of Corollary \ref{c:LPdistance}, which is exactly Theorem \ref{T: LooseTheorem}.

\begin{theorem}
Let $G:\R^2 \to \R^2$ continuous and strictly monotone, and let $u$ a Lipschitz solution of $\dv(G(\nabla u))=0$. Then, for any $x\in B_1$, either $y \mapsto \dist(\nabla u(y),\mathcal{D}\cap\mathcal{S})$ is continuous at $x$, or it holds :
\[ \frac{1}{\vert B_{\delta}\vert}\int_{B_{\delta}(x)} \dist(\nabla u(y),\mathcal{D}\cap\mathcal{S}) dy \to 0 \text{ as } \delta \to 0 . \]
\end{theorem}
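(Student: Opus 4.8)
The plan is to combine the two halves of the paper --- Theorem \ref{Th inter} for Lebesgue points of $\nabla u$, and Corollary \ref{c:LPdistance} together with its dual version for non-Lebesgue points --- and to patch the latter into a statement about $\mathcal{D}\cap\mathcal{S}$ rather than $\mathcal{D}$ or $\mathcal{S}$ separately. Fix $x\in B_1$. If $x$ is a Lebesgue point of $\nabla u$, then by Theorem \ref{Th inter} (applied after translating so that $x=0$, using translation invariance of the equation) the map $y\mapsto\dist(\nabla u(y),\mathcal{D}\cap\mathcal{S})$ is continuous at $x$, and we are in the first alternative. So it remains to treat the case where $x$ is not a Lebesgue point of $\nabla u$, where we must establish the averaged-vanishing conclusion.

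For the non-Lebesgue case, the first step is to record that by Lemma \ref{l:LPDuGDu}, $x$ fails to be a Lebesgue point of $\nabla u$ if and only if it fails to be a Lebesgue point of $\nabla v = -iG(\nabla u)$ (note the sign normalisation; I will use the convention $-i\nabla v = G(\nabla u)$ fixed just before Lemma \ref{l:LPDuGDu}, so $\nabla v = iG(\nabla u)$ up to the harmless sign, and the conjugate field $G^*$ solves its own equation). Hence Corollary \ref{c:LPdistance} applies both to $u$ and, via the dual solution $v$ of $\dv(G^*(\nabla v))=0$, to $v$; the latter, transported back through the homeomorphism $iG$ exactly as in the displayed computation preceding Lemma \ref{l:LPDuGDu}, yields
\[
\frac{1}{|B_\delta|}\int_{B_\delta(x)}\dist(\nabla u(y),\mathcal{D}(G))\,dy\to 0,
\qquad
\frac{1}{|B_\delta|}\int_{B_\delta(x)}\dist(\nabla u(y),\mathcal{S}(G))\,dy\to 0
\]
as $\delta\to 0$. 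The second step is to upgrade these two separate averaged estimates to one for $\mathcal{D}\cap\mathcal{S}$. This is the only genuinely new point, and it is the step I expect to be the main (though modest) obstacle: $\dist(\cdot,\mathcal{D}\cap\mathcal{S})$ is not controlled pointwise by $\dist(\cdot,\mathcal{D})+\dist(\cdot,\mathcal{S})$ for general closed sets, so one cannot simply add the two statements. Instead I would argue as follows. Using the bound $|\nabla u|\le M$, everything lives in the compact set $\overline{B_M}$. Suppose, for contradiction, that $\frac{1}{|B_\delta|}\int_{B_\delta(x)}\dist(\nabla u(y),\mathcal{D}\cap\mathcal{S})\,dy$ does not tend to $0$; then there are $\gamma>0$ and $\delta_k\to 0$ with $\frac{1}{|B_{\delta_k}|}\big|\{y\in B_{\delta_k}(x):\dist(\nabla u(y),\mathcal{D}\cap\mathcal{S})\ge\gamma\}\big|\ge c_\gamma>0$. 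By uniform continuity of $\dist(\cdot,\mathcal{D}\cap\mathcal{S})$ on $\overline{B_M}$ together with the two vanishing averages above, on a large-density subset of $B_{\delta_k}(x)$ the gradient $\nabla u(y)$ lies simultaneously within $\tau$ of $\mathcal{D}$ and within $\tau$ of $\mathcal{S}$, for a $\tau$ we may take as small as we wish by first shrinking along a further subsequence (the sets $\{\dist(\cdot,\mathcal{D})\ge\tau\}$ and $\{\dist(\cdot,\mathcal{S})\ge\tau\}$ each have vanishing relative density, so their union does too, hence the complement has full density). Thus on a set of positive relative density $\nabla u(y)\in\mathcal{N}^\tau(\mathcal{D})\cap\mathcal{N}^\tau(\mathcal{S})$ while $\dist(\nabla u(y),\mathcal{D}\cap\mathcal{S})\ge\gamma$; letting $\tau\to 0$ along the diagonal and invoking compactness of $\overline{B_M}$, any accumulation point $\xi$ of such gradient values satisfies $\xi\in\mathcal{D}\cap\mathcal{S}$ (both sets being closed) yet $\dist(\xi,\mathcal{D}\cap\mathcal{S})\ge\gamma$, a contradiction.

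A cleaner way to phrase the same idea, which I would prefer in the write-up, is purely measure-theoretic: from the two averaged limits one gets, for every $\tau>0$,
\[
\limsup_{\delta\to 0}\frac{1}{|B_\delta|}\big|\{y\in B_\delta(x):\dist(\nabla u(y),\mathcal{D})\ge\tau\ \text{or}\ \dist(\nabla u(y),\mathcal{S})\ge\tau\}\big|=0,
\]
so the complement, on which $\nabla u(y)\in\overline{\mathcal{N}^\tau(\mathcal{D})}\cap\overline{\mathcal{N}^\tau(\mathcal{S})}$, has full density at $x$. By compactness, $\overline{\mathcal{N}^\tau(\mathcal{D})}\cap\overline{\mathcal{N}^\tau(\mathcal{S})}\subset\mathcal{N}^{\omega(\tau)}(\mathcal{D}\cap\mathcal{S})$ for some modulus $\omega(\tau)\to 0$ as $\tau\to 0$ (if not, a compactness argument produces a point in $\mathcal{D}\cap\mathcal{S}$ at distance $\ge\varepsilon_0$ from $\mathcal{D}\cap\mathcal{S}$, absurd). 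Hence for each $\tau$, $\dist(\nabla u(y),\mathcal{D}\cap\mathcal{S})<\omega(\tau)$ on a full-density subset, and a standard diagonal/truncation argument combined with the bound $\dist(\nabla u,\mathcal{D}\cap\mathcal{S})\le \diam(\overline{B_M})+\dist(0,\mathcal{D}\cap\mathcal{S})=:R<\infty$ gives
\[
\frac{1}{|B_\delta|}\int_{B_\delta(x)}\dist(\nabla u(y),\mathcal{D}\cap\mathcal{S})\,dy
\le \omega(\tau)+R\cdot\frac{|\{y\in B_\delta(x):\dist(\nabla u(y),\mathcal{D})\ge\tau\ \text{or}\ \dist(\nabla u(y),\mathcal{S})\ge\tau\}|}{|B_\delta|},
\]
whose $\limsup$ as $\delta\to 0$ is at most $\omega(\tau)$ for every $\tau>0$; letting $\tau\to 0$ finishes the proof. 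Since $\mathcal{D}\cap\mathcal{S}\cap\overline{B_M}\ne\emptyset$ may be assumed (otherwise $u\in C^1$ by \cite[Main Theorem]{LacLam} and the first alternative holds trivially everywhere), the constant $R$ is finite and all the above is legitimate. The main subtlety, as noted, is exactly the passage from $\mathcal{N}^\tau(\mathcal{D})\cap\mathcal{N}^\tau(\mathcal{S})$ to a neighbourhood of $\mathcal{D}\cap\mathcal{S}$ --- a pointwise inequality between the distance functions is false, but the compactness argument recovers a uniform modulus, which is all that is needed.
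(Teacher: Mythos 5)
Your proposal is correct and follows the same decomposition as the paper: Theorem \ref{Th inter} handles Lebesgue points, and for non-Lebesgue points you invoke Lemma \ref{l:LPDuGDu}, Corollary \ref{c:LPdistance}, and the duality through $G^*$ to get the two separate averaged statements for $\mathcal{D}$ and for $\mathcal{S}$. The only place where you diverge from the paper is the final combination step, which you correctly identify as the one genuinely new point. The paper disposes of it by reusing the Young-measure machinery already set up in Section 6: the two vanishing averages force $\supp\nu_x\subset\mathcal{D}$ and $\supp\nu_x\subset\mathcal{S}$ for a.e.\ $x$, hence $\supp\nu_x\subset\mathcal{D}\cap\mathcal{S}$, and testing \eqref{Young measure} against (a smoothing of) $\dist(\cdot,\mathcal{D}\cap\mathcal{S})$ gives the conclusion along the generating subsequence, hence along the full family by the usual subsequence argument. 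You instead give a direct measure-theoretic proof: Chebyshev turns the vanishing averages into vanishing relative density of $\{\dist(\nabla u,\mathcal{D})\ge\tau\}\cup\{\dist(\nabla u,\mathcal{S})\ge\tau\}$, and a compactness argument on $\overline{B_M}$ (using that $\mathcal{D}$ and $\mathcal{S}$ are closed) produces the uniform modulus $\omega(\tau)$ with $\overline{B_M}\cap\overline{\mathcal{N}^\tau(\mathcal{D})}\cap\overline{\mathcal{N}^\tau(\mathcal{S})}\subset\mathcal{N}^{\omega(\tau)}(\mathcal{D}\cap\mathcal{S})$, which is exactly what is needed since, as you note, no pointwise inequality between the distance functions is available. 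Both arguments are sound; yours is more elementary and self-contained, while the paper's is shorter given that the Young measures $\nu_x$ are already in place.
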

\begin{proof}
By Theorem \ref{Th inter}, if $\xi \mapsto \dist(\nabla u(\xi),\mathcal{D}\cap\mathcal{S})$ is not continuous at $x$, then, $x$ is not a Lebesgue point of $\nabla u$. 	Hence, by Lemma \ref{l:LPDuGDu}, $x$ is not a Lebesgue point of $iG(\nabla u)$. Applying Corollary \ref{c:LPdistance}, we get 
\begin{align*}
\frac{1}{\vert B_{\delta} \vert} \int_{B_{\delta}(x)} \dist(\nabla u(y),\mathcal{D})dy & \to 0 \text{ as } \delta \to 0, \\
\frac{1}{\vert B_{\delta} \vert} \int_{B_{\delta}(x)} \dist(\nabla u(y),\mathcal{S})dy & \to 0 \text{ as } \delta \to 0 
\end{align*}
This implies that the Young measure generated by the sequence $(u_{\delta})_{\delta>0}$ is supported in $\mathcal{D}\cap\mathcal{S}$ and the conclusion follows.
\end{proof}

\section{Appendix}

\subsection{Appendix A : Proof of Lemmas \ref{Lemme inter vide}}

Recall Lemma \ref{Lemme inter vide} :
\begin{lemma}
Let $G$ smooth strongly monotone, and $u$ a smooth solution of $\dv(G(\nabla u))=0$. 
Let $p,q $ and $\rho >0 $ such that $B_{\rho}(q) \subset O_{\lambda}\cap V_{\Lambda} $ and $p_0 \notin B_{\rho}(q)$. Then, there exists $\e :=\e(\lambda,\Lambda,\rho)$ such that if 
\[ \vert u - \langle p_0, x \rangle \vert \leq \e \text{ for all } x \in B_1            \]
then, $\nabla u(B_{1/2}) \cap B_{\rho/2}(q)=\emptyset$.
\end{lemma}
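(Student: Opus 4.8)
The statement is Claim 8.2 of \cite[Proposition 6.2]{DSS}, so the plan is to reproduce that argument in the present notation. The strategy is a compactness/contradiction argument combined with the basic algebraic consequences of $G$ being elliptic on the ball $B_\rho(q)$. First I would argue by contradiction: suppose the conclusion fails for every $\e>0$. Then there is a sequence $\e_k\to 0$, smooth strongly monotone fields $G_k$ and smooth solutions $u_k$ of $\dv(G_k(\nabla u_k))=0$ with $\vert u_k(x)-\langle p_0,x\rangle\vert\le \e_k$ on $B_1$, and points $x_k\in B_{1/2}$ with $\nabla u_k(x_k)\in B_{\rho/2}(q)$. The uniform closeness to the affine map $\langle p_0,\cdot\rangle$ together with interior gradient bounds (or simply the hypothesis that we work with $M$-Lipschitz solutions) gives, after passing to a subsequence, $u_k\to \langle p_0,\cdot\rangle$ locally uniformly, and one wants to upgrade this to gradient convergence \emph{at the scale where the image meets $B_{\rho/2}(q)$}, which is exactly where ellipticity is available.

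The key quantitative mechanism, rather than abstract compactness, is the following: on $B_\rho(q)\subset O_\lambda\cap V_\Lambda$ the field $G$ (hence $G_k$ for $k$ large, up to a harmless change of constants via Lemma~\ref{l:approxG}) is uniformly elliptic, so the equation $\dv(G_k(\nabla u_k))=0$ is, in the region where $\nabla u_k$ takes values in $B_\rho(q)$, a uniformly elliptic equation to which De Giorgi--Nash--Moser / Schauder estimates apply. Concretely: I would use the standard "a function $\e$-close to an affine map is, at a smaller scale and in the elliptic regime, $C^1$-close to that affine map" estimate. The cleanest route is the one used repeatedly in \cite{DSS}: consider, for $p_0\notin B_\rho(q)$, the auxiliary (strictly) convex function whose gradient is $\frac12\vert\cdot\vert^2$-type perturbation, i.e. look at $w(x)=\frac12\vert x-x_0\vert^2\cdot c + u_k(x)-\langle p_0,x\rangle$ for a suitable small $c$, use that its minimum over $B_{1}$ is attained at an interior point $\bar x$ with $\nabla u_k(\bar x)$ close to $p_0$, and propagate this via the Hartman--Nirenberg/maximum principle ($\det\nabla^2 u_k\le 0$) as done elsewhere in the paper. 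Since $p_0\notin B_\rho(q)$ and $\dist(p_0,B_{\rho/2}(q))\ge \rho/2$, one gets a contradiction with $\nabla u_k(x_k)\in B_{\rho/2}(q)$ once $\e_k$ is small relative to $\rho$ (and the distortion introduced by the gradient image hitting an elliptic ball of radius $\rho$ is controlled by $\lambda,\Lambda$).

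More precisely, the dependence $\e=\e(\lambda,\Lambda,\rho)$ comes out as follows. Fix a point $x_0\in B_{1/2}$ with $\nabla u(x_0)\in B_{\rho/2}(q)$ (if none exists we are done). Because $G$ is elliptic on $B_\rho(q)$, a Harnack/De Giorgi iteration for the linearised equation shows that $\nabla u$ stays in $B_\rho(q)$ on a ball $B_{c(\lambda,\Lambda)}(x_0)$, on which $u$ therefore solves a uniformly elliptic equation with ellipticity ratio $\Lambda/\lambda$; interior $C^{1,\alpha}$ estimates then give $\vert\nabla u(x_0)-\bar p\vert\le C(\lambda,\Lambda)\,\e / c(\lambda,\Lambda)$ for the affine approximation $\bar p$ of $u$ on that subball, and comparing with the global flatness $\vert u-\langle p_0,\cdot\rangle\vert\le\e$ forces $\vert\bar p - p_0\vert\le C(\lambda,\Lambda)\e$. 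Hence $\vert\nabla u(x_0)-p_0\vert\le C(\lambda,\Lambda)\e$, and choosing $\e<\rho/(2C(\lambda,\Lambda))$ contradicts $p_0\notin B_\rho(q)$ while $\nabla u(x_0)\in B_{\rho/2}(q)$. The main obstacle I anticipate is the \emph{localisation step}: one must first know that $\nabla u$ does not exit the elliptic ball $B_\rho(q)$ on a definite subball around $x_0$ before the elliptic estimates can be invoked, and this is precisely where the image having to "escape to the boundary of $B_\rho(q)$" versus staying inside is handled by a maximum-principle argument for $\det\nabla^2 u\le 0$; since the paper already supplies Lemma~\ref{Lemme inter vide} as a black box from \cite{DSS} with the explicit dependence deferred to this appendix, I would simply assemble these ingredients and track constants, rather than reprove the elliptic regularity theory.
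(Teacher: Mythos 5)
Your proposal has a genuine gap at exactly the step you flag as "the main obstacle," and the way you propose to close it is circular. You want to run De Giorgi--Nash--Moser / interior $C^{1,\alpha}$ estimates on a definite subball $B_{c(\lambda,\Lambda)}(x_0)$ around a point where $\nabla u(x_0)\in B_{\rho/2}(q)$, but this requires first knowing that $\nabla u$ stays inside the elliptic ball $B_\rho(q)$ on that whole subball. No a priori modulus of continuity for $\nabla u$ is available: the equation is possibly totally degenerate the instant the gradient leaves $B_\rho(q)$, so "a Harnack/De Giorgi iteration shows that $\nabla u$ stays in $B_\rho(q)$ on a ball of definite radius" is unjustified --- that confinement is essentially the content of the lemma itself. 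Your fallback, to invoke Lemma~\ref{Lemme inter vide} "as a black box," is invoking the statement being proved. The auxiliary function $\frac{c}{2}\vert x-x_0\vert^2+u-\langle p_0,x\rangle$ you mention only produces \emph{one} point where $\nabla u$ is close to $p_0$ (this is how it is used in the proof of Proposition~\ref{p:Flatness}); it says nothing about the gradient image avoiding $B_{\rho/2}(q)$ elsewhere.

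The paper's actual argument is quite different and does not pass through elliptic regularity estimates for $u$ at all. It assumes $\nabla u(x_0)=p_1\in B_{\rho/2}(q)$ and invokes the two-dimensional topological consequence of $\det\nabla^2 u\le 0$ (Proposition~\ref{PropDSS}, from \cite{DSS}): the sets $\lbrace u<l\rbrace$ and $\lbrace u>l\rbrace$, for $l$ the tangent plane at $x_0$, each have at least two connected components reaching $\partial B_1$. The flatness hypothesis traps one component of $\lbrace u<l\rbrace$ in a strip of width $O(\e/\vert p_0-p_1\vert)$, with $\vert p_0-p_1\vert\ge\rho/2$. The ellipticity of $B_\rho(q)$ is then used only to certify that an \emph{explicit} barrier $w=\gamma e^{k(x_2-20x_1^2)}-c$, built with $\nabla w$ confined to $B_{\rho/2}(p_1)$ by taking $\gamma$ small, satisfies $\Tr(\nabla^s G(\nabla w)\nabla^2w)>0$ for $k=k(\lambda)$ large. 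The comparison principle for $u-w$ on a rectangle then forces $u>0$ along a vertical segment that must meet the second connected component of $\lbrace u<l\rbrace$, a contradiction; this is also what produces the explicit $\e(\lambda,\Lambda,\rho)$. To repair your proof you would need to replace the regularity-theory step by such a barrier/comparison mechanism (or an equivalent device); as written, the argument does not close.
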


\begin{remark}
\begin{itemize}
\item The lemma is stated in the particular case $l_p = \langle p,x \rangle$. It is sufficient to prove it in this setting : indeed, if $l_p = \langle p,x \rangle + c $ for some constant $c$, then, define $\tilde{u}=u-c$. Then, $\tilde{u}$ is a solution of the same equation and satisfies the flatness assumption as stated in the Lemma. Since $\nabla \tilde{u}=\nabla u $ the conclusion follows.
\item As it will be clear in the proof, one needs only the inclusion $B_{\rho}(q) \subset O_{\lambda}\cap \tilde{V}_{\Lambda} $ where \[ \tilde{V}_{\Lambda} := \interior \lbrace \xi \in \R^2, \limsup_{\zeta \to 0} \frac{\langle G(\xi+\zeta)-G(\xi),\zeta \rangle}{\vert \zeta \vert^2} \leq \Lambda \rbrace      . \] It is shown in \cite{LacLam} that we have the inclusion $V_{\Lambda} \subset \tilde{V}_{\Lambda}$ for any continuous, strictly monotone field $G$.
\end{itemize}
\end{remark}

Since the proof of Lemma \ref{Lemme inter vide} in \cite{DSS} is embedded in \cite[Proposition 6.2]{DSS}, we rewrite it for readers convenience. It based on the following geometrical result, which can also be found in \cite[Theorem 5.3]{DSS} : 

\begin{proposition}
\label{PropDSS}
Let $u \in \mathcal{C}^{\infty}(B_1)$ a solution of $a_{ij}(x)u_{ij} = 0 $  in $B_1$ with $A(x) := a_{ij}(x)$ positive definite. Assume that $u$ is not linear. Then, in each open set $U$, there is a point $x_U$ such that each set \[ \lbrace u> l_U  \rbrace , \lbrace u<l_U \rbrace            \] where \[l_U : x \mapsto  u(x_U)+\langle x-x_U, \nabla u (x_U) \rangle \] has at least two distinct connected components in $B_1$ that intersects any neighborhood of $x_U$. Moreover, this connected components are not compactly supported in $B_1$.

\end{proposition}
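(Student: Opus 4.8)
The plan is to locate a point $x_U\in U$ at which $u$ touches its tangent plane nondegenerately, and then to read off the local nodal picture of $w:=u-l_{x_U}$ there. Throughout, note that since $u$ is smooth and $A$ is positive definite, the linear operator $L:=a_{ij}\partial_{ij}$ is (locally uniformly) elliptic, so it obeys the strong maximum principle and the strong unique continuation property.

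I would first dispose of the non-compactness claim, which is independent of the choice of $x_U$. If $\Omega$ is a connected component of $\{u>l\}$ for some affine $l$, then $w=u-l$ is continuous, $w>0$ on $\Omega$, and every point of $\partial\Omega\cap B_1$ satisfies $w=0$: such a point lies in $\overline{\{w>0\}}\subset\{w\ge 0\}$, and it cannot itself lie in $\{w>0\}$, for otherwise its whole $\{w>0\}$-component would meet $\Omega$ and hence coincide with $\Omega$. Thus if $\Omega\Subset B_1$, then $Lw=0$ in $\Omega$ with $w=0$ on $\partial\Omega$, so $w\equiv 0$ in $\Omega$ by the maximum principle, contradicting $w>0$ there. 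Hence no component of $\{u>l\}$, and symmetrically of $\{u<l\}$, is compactly contained in $B_1$.

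Next, the choice of $x_U$. By unique continuation, $u$ cannot agree with an affine map on any nonempty open subset of $B_1$ (otherwise, $B_1$ being connected, $u$ would be affine), so $\nabla^2 u$ does not vanish identically on $U$; fix $x_U\in U$ with $\nabla^2 u(x_U)\neq 0$ and $\overline{B_{r_0}(x_U)}\subset B_1$. Evaluating $Lu=0$ at $x_U$ gives $\tr\big(A^s(x_U)\,\nabla^2 u(x_U)\big)=0$, where $A^s(x_U)$ is the symmetric part of $A(x_U)$, hence positive definite; since $\nabla^2 u(x_U)=\nabla^2 w(x_U)$ is a nonzero symmetric matrix, it must have one strictly positive and one strictly negative eigenvalue, i.e.\ $x_U$ is a nondegenerate saddle of $w$ (with $\nabla w(x_U)=0$ by the definition of $l_{x_U}$). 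By the Morse lemma there are $C^1$ coordinates near $x_U$ in which $w=XY$; so, shrinking $r_0$, the set $\{w=0\}\cap B_{r_0}(x_U)$ is a pair of transverse $C^1$ arcs through $x_U$, while $\{w>0\}\cap B_{r_0}(x_U)$ (resp.\ $\{w<0\}\cap B_{r_0}(x_U)$) consists of exactly two connected curvilinear quadrants $T_1,T_2$ (resp.\ $T_1^-,T_2^-$), each having $x_U$ in its closure, hence each meeting every neighborhood of $x_U$.

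It remains to show that $T_1,T_2$ lie in distinct connected components of $\{w>0\}=\{u>l_U\}$ (and $T_1^-,T_2^-$ in distinct components of $\{w<0\}$); together with the non-compactness above this is exactly the asserted statement, with $l_U:=l_{x_U}$. I would argue by contradiction. Suppose $T_1,T_2$ lie in one component $\Omega$. Pick $P_i\in T_i$, join $P_1$ to $P_2$ by a simple arc $\gamma\subset\Omega$, and join $x_U$ to $P_i$ by a simple arc $\alpha_i\subset\overline{T_i}\cap\overline{B_{r_0}(x_U)}$ running radially into $T_i$ near $x_U$; after a routine modification $J:=\alpha_1\cup\gamma\cup\alpha_2$ is a Jordan curve, and $J\subset\{w\ge 0\}\cap B_1$. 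Since $J\subset B_1$ and $\R^2\setminus B_1$ is connected and unbounded, the bounded Jordan domain $D$ enclosed by $J$ satisfies $\overline D\subset B_1$. Near $x_U$ the curve $J$ consists of the two arcs $\alpha_1,\alpha_2$ entering the opposite quadrants $T_1,T_2$, so a small punctured disk about $x_U$ is split by $J$ into two pieces on opposite sides of $J$, one meeting $T_1^-$ and the other $T_2^-$; say $T_1^-$ meets the bounded side $D$. As the component $\Omega^-$ of $\{w<0\}$ containing $T_1^-$ is connected and disjoint from $J\subset\{w\ge 0\}$, it lies entirely in $D$, so $\Omega^-\Subset B_1$ — contradicting the non-compactness established in the second paragraph. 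Therefore $T_1,T_2$ lie in distinct components, and the identical argument applies to $T_1^-,T_2^-$. This topological step — combining the Jordan curve theorem with the maximum principle, crucially via the non-compactness of nodal domains — is the heart of the proof; the remaining ingredients (unique continuation, the maximum principle, and the Morse normal form at a nondegenerate saddle) are standard.
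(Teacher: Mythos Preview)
The paper does not give its own proof of this proposition; it cites \cite[Theorem~5.3]{DSS} and records only the one-line remark that the result ``is a consequence of the maximum principle and the fact that $\nabla^2 u$ has eigenvalues with opposite signs.'' Your argument develops precisely these two ingredients: the maximum principle yields non-compactness of the nodal components of $u-l$, and the equation $\tr(A^s(x_U)\nabla^2 u(x_U))=0$ forces a nondegenerate saddle at any point with $\nabla^2 u(x_U)\neq 0$, whose Morse four-quadrant picture you then combine with a Jordan-curve argument (feeding back into the non-compactness) to separate the components. This is the intended approach, carried out correctly.

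One small remark: you invoke unique continuation to guarantee that $\nabla^2 u$ does not vanish identically on the given open set $U$. This is indeed needed (the global hypothesis is only that $u$ is not affine on $B_1$), and in the paper's applications the coefficients $a_{ij}=\partial_i G_j(\nabla u)$ are smooth, so unique continuation is unproblematic; for the bare statement with merely positive-definite $A$ one should tacitly assume enough regularity (e.g.\ continuity, automatic here) for the maximum principle and unique continuation to apply.
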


This result is purely $2$ dimensionnal, it is a consequence of the maximum principle and the fact that $\nabla^2 u$ has eigenvalues with opposite signs.

Here is the proof of Lemma \ref{Lemme inter vide} :

\begin{proof}[Proof of Lemma~\ref{Lemme inter vide}]

We assume that $u$ is not linear, otherwise the result is obvious. 
Assume by contradiction that there exists $x_0 \in B_{1/2}$ such that $\nabla u(x_0) = p_1 \in B_{\rho/2}(q)$. Since $u$ is not linear, we can apply Propositon \ref{PropDSS} with $U$ any neighborhood of $x_0$. Without loss of generality, we may assume that $x_0$ is the point $x_U$ of Proposition \ref{PropDSS} (otherwise, take a smaller neighborhood). Thus, we get that each set
\begin{align*}
& \lbrace u(x) < l(x) := u(x_0)+p_1 \cdot (x-x_0) \rbrace, \\ & \lbrace u(x) > l(x) := u(x_0)+p_1 \cdot (x-x_0) \rbrace 
\end{align*} 
has in $B_1$ at least two distinct connected components that intersect any neighborhood of $x_0$. \\

On the other hand, the flatness assumption gives 

\[ \lbrace  \langle x-x_0,p_0-p_1 \rangle < -2 \e \rbrace \subset \lbrace u<l  \rbrace                  \] and \[  \lbrace  \langle x-x_0,p_0-p_1 \rangle > 2 \e \rbrace \subset \lbrace u>l  \rbrace.           \] This means that the set $ \lbrace u<l \rbrace $ has one connected component included in the strip \[ \lbrace \vert \langle x-x_0, p_0-p_1 \rangle \vert \leq 2\e \rbrace.               \] Changing the coordinates in the $x$ and $p$-space, we may assume that \[p_1=0, \quad p_0= \alpha e_2,\quad x_0 =  e_1 /4, \quad  u(x_0)=0, \text{ with } M \geq \alpha> \rho/2 \] because $p_0 \notin B_{\rho}(q)$, hence $p_0 \notin B_{\rho/2}(p_1)$. Recall that $B_{\rho}(q) \subset O_{\lambda}\cap \tilde{V}_{\Lambda}$, we assume for simplicity that $ \lambda \leq 1/\Lambda$ and therefore, we have $B_{\rho}(q) \subset O_{\lambda}\cap \tilde{V}_{1/\lambda}$.\\
Define the rectangle \[ \overline{R} := \lbrace \vert x_1 \vert \leq 1/8, \vert x_2 \vert \leq 1/8 \rbrace.                     \] Without loss of generality, we can assume that the connected component of $\lbrace u<0 \rbrace $ in the thin strip $\lbrace \vert \langle x,e_2\rangle \vert \leq 2\e \rbrace $ goes out of $B_1$ by the left side of $x_0$ and $ u< 0 $ below the horizontal line $\lbrace x_2 = -2 \e \rbrace$.   Therefore $ \lbrace u<0 \rbrace$ has in $\overline{R}$ a connected component included in the strip $\lbrace \vert \langle x,\alpha e_2 \rangle \vert \leq 2\e \rbrace $ that intersects both $ x_1 = \pm 1/8$ and we let $U$ be the connected component of $\lbrace u<0 \rbrace $ in $\overline{R}$ which contains $ \lbrace x_2=-1/8 \rbrace$. \\ 
The goal is to construct a subsolution $w$ of the PDE that is $\tr(\nabla G(\nabla w)D^2w) > 0$, increasing in the $x_2$ direction, with the property that $u-w > 0 $ on the lateral sides of the set $\overline{R}\setminus U$ and $u-w < 0 $ in $\partial U \cap \lbrace x_1=0 \rbrace$. If we can do so, by the minimum principle, this means that the minimum of $u-w$ occurs at some point $z_0 \in \partial U \cap R $ and since $w$ is increasing in the $x_2$ direction, we infer $u(x)> w(x)-w(z_0) > 0 $ for $x$ in the line $z_0 +t e_2 $, $t>0$. This is a contradiction because this line has to intersect the other connected component of $\lbrace u< 0 \rbrace$. Now, we construct such a $w$. The idea is to use the fact that $B_{\rho/2}(0) $ is included in the elliptic region, and therefore, construct $w$ with small gradient to obtain a sub-solution. \\

 For $\gamma$ and $k$ to be choosen, define the functions \[v = x_2-20x_1^2 \quad w := \gamma \exp(kv)-c \] where $c$ such that $w(-1/8)=0$. We have \[ \nabla^2 w =  \gamma k \exp(k v) \left( \nabla^2 v + k \nabla v \otimes \nabla v \right).          \] For any $k\geq 0$, we can choose $\gamma$ small so that $\nabla w \in B_{\rho/2}$. Since $B_{\rho/2}(0) \subset O_{\lambda}\cap \tilde{V}_{1/\lambda}$, denoting $ \nabla^2 w^{\pm}  $ the positive and negative part of the Hessian of $w$, and using the inequality \[ \Tr(\nabla^s G(\nabla w)\nabla^2 w ) \geq \lambda \vert \nabla^2 w ^+ \vert - \frac{1}{\lambda} \vert \nabla^2 w ^- \vert      \] we can check that for $k$ large enough depending on $\lambda$, the function $w$ satisfies $ \Tr(\nabla^s G(\nabla w)\nabla^2 w) > 0 $. Indeed, for \[ k \geq \left(\frac{80+\sqrt{6400+4000\lambda^2}}{10 \lambda} \right)^2     \] we compute :
\begin{align*}
\nabla ^2 w = \gamma k \exp(kv) \begin{pmatrix}
1600kx_1^2-40 & -40kx_1 \\ -40kx_1 & k
\end{pmatrix}.
\end{align*}
We seek for the negative/positive part of the matrix  \[A :=\begin{pmatrix}
1600kx_1^2-40 & -40kx_1 \\ -40kx_1 & k
\end{pmatrix} = A^+ + A^-. \] We have : 
\begin{align*}
\lambda \vert A^+ \vert + \frac{1}{\lambda} \vert A^- \vert = \frac{40k \lambda}{\vert A^- \vert}-\frac{1}{\lambda}\vert A^- \vert
\end{align*} which is $ > 0 $ provided $ \vert A^- \vert \leq \lambda\sqrt{40k}$. Now, we use the characteristic polynomial for a $2\times2$ matrix to obtain \[ \vert A^- \vert = \frac{80k}{1600kx_1^2-40 + k +\sqrt{\left( 1600kx_1^2-40 + k \right)^2 + 40k}}.      \] It remains to see that for $k$ as above, we actually have \[ \vert A^- \vert \leq \lambda \sqrt{40k}.    \] For this $k$, we then define \[\gamma := \frac{\rho}{4}\frac{\exp(-k)}{\sqrt{1601}k}  \] so we have $\nabla w(B_1) \subset B_{\rho/2}(0)$, and therefore, $w$ is a subsolution of the PDE, so $u-w$ cannot achieve its minimum in the interior of $\overline{R}\setminus U$.

Now we compare $u$ with $w$, notice that :
\[ \alpha \langle e_2,x \rangle > \frac{\rho}{16} \quad \text{ on } \lbrace x_2= 1/8, \vert x_1 \vert \leq 1/8 \rbrace  \] and  for this $\gamma$, we have\[ w= \gamma \exp(k\left(1/8-20 x_1^2 \right)) - \gamma \exp(-k/8) < \rho/32  \quad \text{ on } \lbrace x_2= 1/8, \vert x_1 \vert \leq 1/8 \rbrace.  \]
Therefore, \[  \alpha \langle e_2,x \rangle - w > \frac{\rho}{32} \quad \text{ on } \lbrace x_2= 1/8, \vert x_1 \vert \leq 1/8 \rbrace        \]

We do the same comparison on the lateral sides of the rectangle. For $\theta > 0 $ small, we have : 
\[ \alpha \langle e_2,x \rangle > -M \theta             \quad \text{ on } \lbrace x_1 = \pm 1/8 , x_2 \geq -\theta \rbrace \] and  in $ \lbrace x_1 = \pm 1/8 , x_2 \geq -\theta \rbrace $ we have : 
\begin{align*}
w & = \gamma \left( \exp(k(x_2-20 x_1^2)) - \exp(-k/8) \right) \\
& \leq \gamma \left( \exp(- \frac{12k}{64}) - \exp(-k/8) \right) \\
& \leq - \gamma \exp(-k/8) \left( 1- \exp(\frac{-k}{16}) \right) \\
& \leq - \frac{\gamma \exp(-k/8)}{2}.
\end{align*}

This means that : 
\begin{align*}
\alpha \langle e_2,x \rangle - w & > -M \theta + \frac{\gamma \exp(-k/8)}{2} \\
& > \frac{\gamma \exp(-k/8)}{4} \\
& > 0
\end{align*}
for $ \theta < \frac{\gamma \exp(-k/8)}{4M} $ in $ \lbrace x_1 = \pm 1/8 , x_2 \geq -\theta \rbrace $.

It is time to set $\e$. We let $ \e := \frac{\gamma \rho \exp(-k/8)}{8M}$  and we use the flatness assumption to get:
\begin{align*}
u-w & \geq \alpha \langle e_2,x \rangle - \e - w \\
& \geq \frac{\gamma \exp(-k/8)}{4}- \frac{\gamma \rho \exp(-k/8)}{8M} \\
& > 0 \quad \text{ on } \lbrace x_1 = \pm 1/8, x_2 \geq -2 \e /\rho \rbrace
\end{align*}

and 
\begin{align*}
u-w & \geq \alpha \langle e_2,x \rangle - \e - w \\
& \geq \frac{\rho}{32}- \frac{\gamma \rho \exp(-k/8)}{8M} \\
& > 0 \quad \text{ on } \lbrace \vert x_1\vert \leq  1/8, x_2 = 1/8 \rbrace.
\end{align*} However, $u-w<0$ on $\lbrace x_1=0 \rbrace \cap \partial U$, hence the minimum of $u-w$ occurs at some point $z_0 \in \partial U \cap R$. This means that \[ u \geq w-w(z_0) \text{ in } \overline{R}\setminus U.          \] This is a contradiction, since one the line $z_0+te_2$ the function $w$ is increasing, meaning that for $t_0$ such that $z_0+t_0e_2$ belongs to the other connected component of $\lbrace u<0 \rbrace$ in $\overline{R}$ we have $u(z_0+t_0e_2)>0$.
 
 Therefore, for  \[\e = \frac{\rho^2 \exp(-k)}{32Mk\sqrt{1601}} \exp(-k/8) \quad k = \left(\frac{80+\sqrt{6400+4000\lambda^2}}{10 \lambda} \right)^2   \] we obtain what we want. 
\end{proof}

\subsection{Appendix B : Proof of Theorem \ref{p:apriori}}

The proof is based on two localisations Lemma. The first one was obtained in the variational setting in \cite{DSS}, the second one was obtained in \cite{LacLam}.

\begin{lemma}\label{l:locOlambda}
Let $u$ a solution of  $\dv G(\nabla u)=0 $ in $B_1$ and assume that  
\[ \nabla u (B_1) \cap B_{\rho}(\xi_0) = \emptyset \text{ } \text{ and } \text{ } B_{4 \rho}(\xi_0)  \subset O_{\lambda}(G)\, , \]
for some $\lambda,\rho > 0$ and $\xi_0\in\R^2$.
Then we have
\begin{align*}
\text{either }
&
\nabla u(B_{\delta}) \subset B_{4 \rho}(\xi_0), 
\quad
\text{or }
\nabla u (B_{\delta}) \cap B_{3 \rho}(\xi_0) = \emptyset \, ,
\end{align*}
for some $\delta>0$ depending on $\lambda$, $\rho$, and $\| G(\nabla u) \|_{L^2(B_1)}$.
\end{lemma}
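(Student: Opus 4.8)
The plan is to produce $\delta$ explicitly by combining an energy estimate for a well‑chosen auxiliary subsolution with a one–dimensional trace bound on circles and two maximum principles. I would work with
\[
W(x) := |\nabla u(x)-\xi_0|^2, \qquad \bar W := \min\bigl(W,\,16\rho^2\bigr),
\]
and first record that $W=\Psi(\nabla u)$, with $\Psi(\xi)=|\xi-\xi_0|^2$ convex, is by the standard Bernstein computation (using $\nabla^s G(\nabla u)\ge 0$, valid since $G$ is smooth and strongly monotone, and $u$ smooth by elliptic regularity) a smooth subsolution of $\dv\bigl(\nabla G(\nabla u)\,\nabla W\bigr)\ge 0$ in $B_1$; hence the weak maximum principle applies to $W$ on every $\overline{B_r}\subset B_1$. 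Then I would estimate the truncation: since $\nabla\bar W=\nabla W$ where $\nabla u\in B_{4\rho}(\xi_0)$ and $\nabla\bar W=0$ elsewhere (a.e.), since $|\nabla W|\le 2|\nabla u-\xi_0|\,|\nabla^2 u|$, and since $B_{4\rho}(\xi_0)\subset O_\lambda(G)$ by hypothesis, the interior $W^{2,2}$ estimate on $\nabla u^{-1}(O_\lambda)$ from \cite[Lemma~2.4]{LacLam} (the same one used for Lemma~\ref{p:W22bound}) gives
\[
\int_{B_{1/2}}|\nabla \bar W|^2 \;\le\; 64\rho^2\!\!\int_{\nabla u^{-1}(O_\lambda(G))\cap B_{1/2}}\!\!\!|\nabla^2 u|^2 \;\le\; \frac{C\rho^2}{\lambda^2}\,\|G(\nabla u)\|_{L^2(B_1)}^2 \;=:\; E_0 ,
\]
an estimate that does not see the Lipschitz bound of $u$ — which is precisely why $\bar W$, and not $W$ itself, is the object one should estimate.

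Next I would select a good radius. Decomposing $(0,\tfrac12)$ into dyadic annuli $A_k=B_{2^{-k}}\setminus B_{2^{-k-1}}$, at least one of the first $N$ of them has $\int_{A_k}|\nabla\bar W|^2\le E_0/N$, and a polar–coordinates plus Cauchy–Schwarz argument in the spirit of \cite[Lemma~5]{SV} then produces a radius $r$ inside it with $\operatorname{osc}_{\partial B_r}\bar W\le C'(E_0/N)^{1/2}$. Choosing $N$ comparable to $E_0/\rho^4$ — which makes $r$ bounded below by an explicit function of $\lambda$, $\rho$ and $\|G(\nabla u)\|_{L^2(B_1)}$ — I can arrange $\operatorname{osc}_{\partial B_r}\bar W\le \rho^2/100$, so that $\bar W$ lies within $\rho^2/100$ of some constant $m\in[\rho^2,16\rho^2]$ on $\partial B_r$. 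I then claim $\delta:=r$ works, splitting into two cases. If $m<16\rho^2-\rho^2/100$, then $W=\bar W\le 16\rho^2-\rho^2/100$ on $\partial B_r$, so the maximum principle for the subsolution $W$ gives $W<16\rho^2$ on $\overline{B_r}$, i.e.\ $\nabla u(B_r)\subset B_{4\rho}(\xi_0)$ — the first alternative. Otherwise $m$ is close to $16\rho^2$, whence $W\ge\bar W>15\rho^2$ on $\partial B_r$, so $\nabla u(\partial B_r)\cap\overline{B_{\sqrt{15}\rho}(\xi_0)}=\emptyset$.

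For the second alternative I would argue by contradiction: if $\nabla u(z)\in B_{3\rho}(\xi_0)$ for some $z\in B_r$, consider the nonempty open set $\Omega=\{x\in B_r:\nabla u(x)\in B_{\sqrt{15}\rho}(\xi_0)\}$, which satisfies $\overline\Omega\subset B_r$ because $|\nabla u-\xi_0|>\sqrt{15}\rho$ on $\partial B_r$. On $\Omega$ one has $\nabla u\in B_{\sqrt{15}\rho}(\xi_0)\subset B_{4\rho}(\xi_0)\subset O_\lambda(G)$, so $u$ solves there the uniformly elliptic equation $\Tr(\nabla^s G(\nabla u)\nabla^2 u)=0$ and $\det\nabla^2 u\le 0$; the Hartman–Nirenberg maximum principle \cite{HN59}, applied exactly as in Section~6, gives $\partial(\nabla u(\Omega))\subset\nabla u(\partial\Omega)\subset\partial B_{\sqrt{15}\rho}(\xi_0)$, so $\nabla u(\Omega)$ is a nonempty subset of the disk $B_{\sqrt{15}\rho}(\xi_0)$ whose topological boundary meets that disk only in the empty set — hence it is the whole disk. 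But then $\nabla u(\Omega)\supset B_\rho(\xi_0)$, contradicting $\nabla u(B_1)\cap B_\rho(\xi_0)=\emptyset$. Therefore $\nabla u(B_r)\cap B_{3\rho}(\xi_0)=\emptyset$, the second alternative.

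The step I expect to be the main obstacle is this last one. Since $W$ is only a subsolution there is no minimum principle available, so the second alternative has to be extracted from the topology of the gradient map via Hartman–Nirenberg; this forces one to run the argument on the possibly irregular (and a priori disconnected) set $\Omega$, to check that $\partial\Omega$ does not meet $\partial B_r$, and to justify carefully the elementary but easily botched topological fact that a nonempty relatively clopen subset of a disk is the whole disk. By comparison, the first alternative (a direct application of the maximum principle to $W$), the $M$‑independence of the energy bound (which is exactly what the truncation to $\bar W$ buys), and the one–dimensional trace inequality on circles are routine.
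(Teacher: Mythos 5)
Your proof is essentially correct and is a genuine, self-contained alternative: the paper does not prove Lemma~\ref{l:locOlambda} at all but imports it from \cite{DSS} ("obtained in the variational setting"), so there is no in-text argument to match. What you do is reassemble, around the specific dichotomy of the lemma, exactly the toolkit the paper deploys elsewhere: the $W^{2,2}$ bound on $\nabla u^{-1}(O_\lambda)$ from \cite[Lemma 2.4]{LacLam} (as in Lemma~\ref{p:W22bound}), the annulus/circle selection of \cite[Lemma 5]{SV} (as in Lemma~\ref{p:SVscale}), and the Hartman--Nirenberg inclusion $\partial(\nabla u(\Omega))\subset\nabla u(\partial\Omega)$ (as in Proposition~\ref{p:subsequence}). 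The truncation $\bar W=\min(W,16\rho^2)$ is the right device to keep $\delta$ independent of $M$ and $\|\nabla u\|_{L^2}$, matching the stated dependence on $\lambda$, $\rho$, $\|G(\nabla u)\|_{L^2(B_1)}$ only; the Bernstein computation for $W$ is valid for non-symmetric $\nabla G$ since only $\nabla^s G\ge 0$ enters; and the dichotomy on $\partial B_r$ ($\bar W<16\rho^2$ everywhere versus $\bar W>15\rho^2$ everywhere) is forced by the small oscillation, so both alternatives are reached correctly.

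The one step you should rework is the one you flag: applying Hartman--Nirenberg to the irregular set $\Omega=\{x\in B_r:\nabla u(x)\in B_{\sqrt{15}\rho}(\xi_0)\}$. The version invoked in Section~6 is stated for balls and justified only by $\det\nabla^2 u\le 0$; for a general open $\Omega$ the problematic points are critical values, and to exclude them you need the extra input that on $\Omega$ the linearized equation is uniformly elliptic from both sides ($\nabla u(\Omega)\subset O_\lambda$ plus smoothness of $G$), so that on each connected component $\nabla u$ is either constant (and then its value lies in $\nabla u(\partial\Omega)\subset\partial B_{\sqrt{15}\rho}(\xi_0)$, not in the open disk) or an open map. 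This is fixable, but unnecessary: apply Hartman--Nirenberg to $B_r$ itself, exactly as in Proposition~\ref{p:subsequence}. Since $|\nabla u-\xi_0|>\sqrt{15}\rho$ on $\partial B_r$, one gets $\partial(\nabla u(B_r))\cap B_{\sqrt{15}\rho}(\xi_0)\subset \nabla u(\partial B_r)\cap B_{\sqrt{15}\rho}(\xi_0)=\emptyset$, so $\nabla u(B_r)\cap B_{\sqrt{15}\rho}(\xi_0)$ is relatively clopen in the disk; if it were nonempty it would be the whole disk and would contain $B_\rho(\xi_0)$, contradicting the hypothesis. This yields the second alternative directly and removes the only fragile point of your argument. (The bound ``$W=\bar W\le 16\rho^2-\rho^2/100$'' in the first case should read $\bar W\le m+\rho^2/100<16\rho^2$, which is all the maximum principle needs.)
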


\begin{lemma}\label{l:locVLambda}
Let $u$ a solution of  $\dv(G(\nabla u))=0 $ in $B_1$ and assume that  
\[ \nabla u (B_1) \cap B_{\rho}(\xi_0) = \emptyset \text{ } \text{ and } \text{ } B_{4 \rho}(\xi_0)  \subset V_{\Lambda}(G)\, , \]
for some $\Lambda,\rho > 0$ and $\xi_0\in\R^2$.
Then we have
\begin{align*}
\text{either }
&
\nabla u(B_{\delta}) \subset B_{4 \rho}(\xi_0), 
\quad
\text{or }
\nabla u (B_{\delta}) \cap B_{3 \rho}(\xi_0) = \emptyset \, ,
\end{align*}
for some $\delta>0$ depending on $\Lambda$, $\rho$, $\| \nabla u \|_{L^2(B_1)}$, 
and any $c>0$ such that $\omega_G(t)/t\geq c$ for $t\in [\rho, \|\nabla u\|_\infty+3\rho ] $.
\end{lemma}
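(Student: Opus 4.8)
The plan is to deduce Lemma~\ref{l:locVLambda} from the already-established Lemma~\ref{l:locOlambda} through the dual function, the construction that interchanges the degeneracy-from-below region $O_\lambda$ and the degeneracy-from-above region $V_\Lambda$. First I would discard the trivial case in which $\overline{B_M}$, with $M=\|\nabla u\|_\infty$, is disjoint from $B_{3\rho}(\xi_0)$ (then the second alternative holds at every scale), so that we may assume $\xi_0$ is relevant, say $\dist(\xi_0,\overline{B_M})<3\rho$. After modifying $G$ outside a large ball so that it becomes a homeomorphism of $\R^2$, leaving $B_{4\rho}(\xi_0)\subset V_\Lambda(G)$ untouched (\cite[Proposition~4.1]{LacLam}), I introduce $G^*(\eta)=iG^{-1}(-i\eta)$ and the dual function $v\colon B_1\to\R$ with $-i\nabla v=G(\nabla u)$, which solves $\dv(G^*(\nabla v))=0$. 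I will use that $G^*$ is strictly monotone; that $iG(V_\Lambda(G))\subset O_{1/\Lambda}(G^*)$, because on $V_\Lambda(G)$ the field $G^{-1}$ --- hence $G^*$, a rotation of it --- has ellipticity from below with constant $1/\Lambda$; that $G$ is $\Lambda$-Lipschitz and (being strictly monotone) a homeomorphism on $B_{4\rho}(\xi_0)$ (\cite[Lemma~A.3]{LacLam}); and that $G^*(\nabla v)=i\nabla u$, so that $\|G^*(\nabla v)\|_{L^2(B_1)}=\|\nabla u\|_{L^2(B_1)}$.

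Set $\eta_0:=iG(\xi_0)$ and $\Omega_j:=iG\bigl(B_{j\rho}(\xi_0)\bigr)$ for $j\in\{1,3,4\}$. Since $iG$ is a homeomorphism, the hypothesis $\nabla u(B_1)\cap B_\rho(\xi_0)=\emptyset$ becomes $\nabla v(B_1)\cap\Omega_1=\emptyset$, the inclusion $B_{4\rho}(\xi_0)\subset V_\Lambda(G)$ becomes $\Omega_4\subset O_{1/\Lambda}(G^*)$, and the two alternatives to be proved, $\nabla u(B_\delta)\subset B_{4\rho}(\xi_0)$ and $\nabla u(B_\delta)\cap B_{3\rho}(\xi_0)=\emptyset$, are equivalent respectively to $\nabla v(B_\delta)\subset\Omega_4$ and $\nabla v(B_\delta)\cap\Omega_3=\emptyset$. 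The key point is that $\Omega_1,\Omega_3,\Omega_4$ are nested open sets, $\overline{\Omega_1}\subset\Omega_3$ and $\overline{\Omega_3}\subset\Omega_4$, whose collar widths $\dist(\Omega_1,\partial\Omega_3)$ and $\dist(\Omega_3,\partial\Omega_4)$ are bounded below by a positive constant depending only on the constant $c$ in $\omega_G(t)/t\ge c$ (a lower bound obtained from strict monotonicity, using that $iG(\partial B_{j\rho}(\xi_0))$ is a Jordan curve enclosing $\eta_0$ at a controlled distance), while $\diam\Omega_4\le 8\Lambda\rho$ by the Lipschitz bound.

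I would then apply Lemma~\ref{l:locOlambda} to $v$ and $G^*$, in the mildly more general form where the three concentric balls $B_\rho\subset B_{3\rho}\subset B_{4\rho}$ are replaced by nested open sets $\Omega_1\subset\Omega_3\subset\Omega_4$ with positive collars and $\Omega_4\subset O_\mu$: inspection of the proof in \cite{DSS} should show that it uses the ball geometry only through these collar widths and through $\diam\Omega_4$, so its conclusion holds with $\delta$ depending on $\mu$, on these geometric quantities, and on $\|G^*(\nabla v)\|_{L^2(B_1)}$. With $\mu:=1/\Lambda$ this yields $\delta>0$ --- depending on $\Lambda$, on $\rho$ and $c$ (through the collar and diameter bounds of the previous paragraph), and on $\|G^*(\nabla v)\|_{L^2(B_1)}=\|\nabla u\|_{L^2(B_1)}$ --- such that $\nabla v(B_\delta)\subset\Omega_4$ or $\nabla v(B_\delta)\cap\Omega_3=\emptyset$. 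Transferring each alternative back through $iG$ produces exactly $\nabla u(B_\delta)\subset B_{4\rho}(\xi_0)$ or $\nabla u(B_\delta)\cap B_{3\rho}(\xi_0)=\emptyset$, with the claimed dependence; the $\omega_G$-dependence, absent in Lemma~\ref{l:locOlambda}, appears here precisely as the cost of passing through $iG$ and back.

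The step I expect to require the most care is the robustness claim in the last paragraph: one has to verify that the argument behind Lemma~\ref{l:locOlambda} --- which runs through a Caccioppoli/$W^{2,2}$ estimate on $\nabla v^{-1}(O_\mu(G^*))$ and a maximum-principle analysis of the saddle structure of $v$ (Proposition~\ref{PropDSS}), both localized inside the good region --- genuinely depends only on the collar widths and the diameter of that region and not on its being a ball. Should this turn out to be delicate, the fallback is to reprove the dichotomy for $v$ directly in this annular open-set geometry by the barrier method of Lemma~\ref{Lemme inter vide}, constructing the comparison function inside $\Omega_4\subset O_{1/\Lambda}(G^*)$ as in Appendix~A; the remaining ingredients --- the dual field, the collar estimates, and the final transfer --- are routine.
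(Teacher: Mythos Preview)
The paper does not actually prove this lemma; at the start of Appendix~B it simply records that Lemma~\ref{l:locVLambda} ``was obtained in \cite{LacLam}'' and then uses it as a black box in the proof of Theorem~\ref{p:apriori}. So there is no in-paper argument to compare against.

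That said, your duality route is the right one and is consistent with how the paper invokes \cite[Proposition~4.1]{LacLam} elsewhere (Lemma~\ref{l:structureSD}, Section~6). The fingerprints of the dual transfer are visible in the very statement of Lemma~\ref{l:locVLambda}: the dependence of $\delta$ is on $\|\nabla u\|_{L^2}$ rather than on $\|G(\nabla u)\|_{L^2}$ as in Lemma~\ref{l:locOlambda}, which is exactly what comes out of $\|G^*(\nabla v)\|_{L^2}=\|\nabla u\|_{L^2}$; and the additional dependence on a lower bound for $\omega_G(t)/t$ on $[\rho,\|\nabla u\|_\infty+3\rho]$, absent from Lemma~\ref{l:locOlambda}, is precisely the price of controlling the collar widths of the images $iG(B_{j\rho}(\xi_0))$ from below. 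Your identification of the one genuinely nontrivial step --- that the argument behind Lemma~\ref{l:locOlambda} goes through for nested Jordan domains with quantitative collars and diameter bound, not just concentric balls --- is accurate; this does hold (the $W^{2,2}$ estimate and the maximum-principle/saddle analysis are local to the good region and see only those quantities), and your fallback via the barrier construction of Appendix~A would also work.
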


\begin{proof}[Proof of Theorem \ref{p:apriori}]

Let $u$ be a smooth  solution of $\dv G(\nabla u)=0 $
 in $B_1$ with $|\nabla u|\leq M$.
By assumption we have
\begin{align*} 
\overline B_{2M}\subset  V_{\Lambda}(G) \cup O_{\lambda}(G)\cup U
\, ,
\end{align*} 
and we fix a Lebesgue number $\eta\in (0,\rho))$ of this open covering, with the property that
any ball $B_\eta(\xi)$ centered at $\xi\in \overline B_{2M}$ is contained in 
$V_{\Lambda}(G)$, $O_{\lambda}(G)$, or U.
We set $\e=\eta/4$.
The ball $\overline B_{2M}$ can be covered by a finite number of balls of radius $\e$.
Removing the balls that are contained in $U$ we are left with a covering
\begin{align*}
\overline B_{2M}\setminus U \subset \bigcup_{k=1}^K B^k_{\e},
\end{align*}
with $K\leq c M^2/\eta^2$ for some universal constant $c>0$, and the property that each ball $B^k_{4\e}$ satisfies
\begin{align*}
B^k_{4\e}\subset V_\Lambda(G)
\quad
\text{or}
\quad 
B^k_{4\e}\subset O_\lambda(G).
\end{align*}
By assumption, there exists a ball $B_{\rho}(q)$ included in a connected component of $ \bigcup_{k=1}^K B^k_{\e},$ such that $ \nabla u(B_1) \cap B_{\rho}(q)=\emptyset $. Hence, there exists $k \in \lbrace 1,,,K \rbrace$ such that
\[ \nabla u (B_1) \cap B_{\e}^k = \emptyset           \]
Since
 $ B^k_{4\e} \subset V_{\Lambda}(G)$ or $B^k_{4\e} \subset O_{\lambda}(G) $, 
 we can apply Lemma~\ref{l:locVLambda} or Lemma~\ref{l:locOlambda} 
  to ensure the existence of some $\delta>0$ such that
\begin{align*}
\text{either }\nabla u(B_\delta)\subset B_{4\e}^k
\quad\text{or }
\nabla u(B_\delta)\cap B_{3\e}^k=\emptyset.
\end{align*}  
If the first case occurs, then we are done since $4\e=\eta$. 
If the second case occurs, 
we infer that 
$ \nabla u(B_{\delta}) \cap B^j_{\e} = \emptyset $ for all neighboring balls $B_{\e}^j$ such that $ B_{\e}^j \cap B_{\e}^k \neq \emptyset $. 
Then we can apply again Lemma~\ref{l:locVLambda} or Lemma~\ref{l:locVLambda}
to the rescaled function $\delta^{-1}u(\delta \cdot)$ 
and these neighboring balls $B_{\e}^j$. 

We iterate this argument: if at some step we reach the first case, we are done.
Otherwise, we eventually covered the connected component $ \mathcal{C}$ of $ \overline{B_{2M}}\setminus U$ containing the ball $B_{\rho}(q)$ with the neighboring balls added at each step, and deduce that
$\nabla u(B_{\delta'})\subset \overline{B_{2M}}\setminus \mathcal{C}$.
Here $\delta'=\delta^K$ for $\delta$ as in Lemma~\ref{l:locVLambda} and Lemma~\ref{l:locOlambda}, and $K$ the number of iterations. This concludes the proof

\end{proof}

\bibliographystyle{acm}
\bibliography{ref}

\end{document}